\title{Unstructured space-time finite element methods for optimal control 
of parabolic equations}
\author{Ulrich~Langer\footnote{Johann Radon Institute for Computational and
    Applied Mathematics, Austrian Academy of Sciences, Altenberger Stra{\ss}e 69, 4040 Linz, Austria, Email:
    ulrich.langer@ricam.oeaw.ac.at}, 
    \; Olaf~Steinbach\footnote{Institut f\"{u}r Angewandte Mathematik,
    Technische Universit\"{a}t Graz, Steyrergasse 30, 8010 Graz, Austria,
    Email: o.steinbach@tugraz.at},  
    \; Fredi~Tr\"{o}ltzsch\footnote{Institut f\"{u}r Mathematik, Technische
    Universit\"{a}t Berlin, Stra{\ss}e des 17. Juni 136, 10623 Berlin,
    Germany, Email: troeltzsch@math.tu-berlin.de}, 
    \; Huidong~Yang\footnote{Johann Radon Institute for Computational and Applied
    Mathematics, Austrian Academy of Sciences, Altenberger Stra{\ss}e 69, 4040 Linz, Austria, Email:
    huidong.yang@ricam.oeaw.ac.at} 
}  
\date{\today}
\newcommand{\be}{\begin{equation}}
\newcommand{\ee}{\end{equation}}
\newcommand{\ignore}[1]{}
\newtheorem{theorem}{Theorem}
\newtheorem{lemma}{Lemma}
\begin{document}

\maketitle

\begin{abstract}
This work presents and analyzes space-time finite element methods on fully
unstructured simplicial space-time meshes for the numerical solution of 
parabolic optimal control problems. Using Babu\v{s}ka's theorem, we show 
well-posedness of the first-order optimality systems for a typical model
problem with linear state equations, but without control constraints. This is
done for both continuous and discrete levels. Based on these results, we
derive discretization error estimates. Then we consider a semilinear parabolic
optimal control problem arising from the Schl\"ogl model. The associated
nonlinear optimality system is solved by Newton's method, where a linear
system, that is similar to the first-order optimality systems considered for
the linear model problems, has to be solved at each Newton step. We present
various numerical experiments including results for adaptive space-time finite
element discretizations based on residual-type error indicators. In the last
two examples, we also consider semilinear parabolic optimal control problems
with box constraints imposed on the control. 
\end{abstract} 

\begin{keywords}
Parabolic optimal control problems, space-time finite element methods, 
discretization error estimates, linear parabolic equations, 
semilinear parabolic equations.
\end{keywords}

\begin{msc}
49J20,  35K20, 65M60, 65M50, 65M15, 65Y05
\end{msc}

\section{Introduction}
\label{sec:intro}
In this paper, we apply continuous space-time finite element methods 
on fully unstructured simplicial space-time meshes to the numerical solution
of optimal control problems for linear and semilinear parabolic equations. 
More precisely, we treat the corresponding parabolic forward-backward
optimality systems at once. In this way, we are able to apply  the 
semismooth Newton method for the optimal control of semilinear state 
equations and for problems with pointwise control constraints.
In particular, this is a challenge for our examples with 
reaction-diffusion equations that develop wave-type solutions. 
We present an error analysis for problems with a linear state 
equation without control constraints. However, our numerical examples 
confirm that the continuous space-time finite element approach works 
also well for problems with semilinear equations and 
under additional pointwise control constraints.

Continuous space-time finite element methods (FEM)
for solving parabolic ini\-ti\-al-boundary value problems (IBVP)
on fully unstructured simplicial space-time meshes have recently been 
studied from a mathematical point of view, e.g., 
in \cite{REBPSVLTZ17,ULMRAS19,OS15}, and have been used in engineering 
applications; see, e.g., 
\cite{LSTY:Behr:2008a,LSTY:KaryofylliWendlingMakeHostersBehr:2019a}.
We also refer the reader to the recent review article \cite{OSHY19} 
on this topic and the related references therein. This space-time 
approach considers the time variable as just another variable
in contrast to the classical time-stepping methods 
or to the more recent, but closely related time discontinuous Galerkin 
(dG) or discontinuous Petrov-Galerkin (dPG) methods 
operating on time slices or slabs.
There is a huge amount of papers on these methods.
Here we only refer to the classical monograph  \cite{LSTY:Thomee2006a}
and to the survey articles \cite{LSTY:Gander:2015a,OSHY19}.
The fully unstructured space-time FEM is obviously more flexible 
with respect to approximation, adaptivity, and parallelization
than time-stepping methods. Moving interfaces or spatial domains 
are fixed geometric objects in the space-time domain. However, we 
have to solve one large-scale system of linear or non-linear
algebraic equations at once instead of many smaller systems sequentially 
arising at each time step. This may be seen as a disadvantage when 
using a sequential computation on a standard computer (desktop or laptop) 
with one or only a few cores. However, this is definitely  
a huge advantage on massively parallel computers. 
Even on a standard computer, simultaneous space-time
adaptivity can dramatically reduce the complexity as our numerical 
experiments presented in this paper show. Indeed,  we are able to 
solve optimal control problems for linear and semilinear 
parabolic reaction-diffusion equations in two-dimensional spatial domains
fast and with high accuracy on standard desktops or even laptops. 
We should underline that all of our numerical experiments were performed 
on a laptop or desktop computer.

An optimal control problem for  a parabolic partial differential 
equation leads to necessary optimality conditions that include a 
coupled system consisting of the state equation being 
forward in time, and the adjoint equation (co-state equation) that 
is directed backward in time, see, e.g., \cite{LSTY:Lions:1968a} 
or \cite{FT10}. The numerical solution of this forward-backward 
system is very demanding, because this cannot easily be done by 
standard time-stepping methods or time-slice dG or dPG methods
in an efficient way. We here only mention the works 
by Meidner and Vexler \cite{DMBV2008a,DMBV2008b}
for optimal error estimates of advanced time-stepping 
dG methods in the optimal control of parabolic PDEs.
Therefore, various other methods were applied for spatial dimensions 
larger than one, for instance, gradient type methods that proceed by 
sequentially solving forward and backward equations 
\cite{MHRPMUSU2009,FT10}. Moreover, several techniques were used 
that lower the dimension of the discretized equations to be solved. 
We mention multigrid methods, Hackbusch  \cite{LSTY:Hackbusch:1981a}, 
and cf. also the monograph by Borzi and Schulz \cite{ABVS2012};
model order reduction by proper orthogonal decomposistion (POD), 
Alla and Volkwein \cite{AASV2015}, Kunisch, Volkwein and Xie 
\cite{KKSVLX2004}; wavelet decomposition, Kunoth \cite{AK2001}; 
tensor product approximations  
\cite{LSTY:BuengerDolgovStoll:2020a, 
LSTY:GunzburgerKunoth:2011a,
LSTY:KollmannKolmbauerLangerWolfmayrZulehner:2013a,
LSTY:LangerRepinWolfmayr:2016a};
or adaptive methods based on goal-oriented error estimators, 
Becker et al. \cite{RBHKRR2000}, Hinterm\"uller et al \cite{MHMHCKTK2018}.
Gong, Hinze and Zhou \cite{WGMHZZ2012} propose to solve a 
higher-order optimality system (second-order in time and 
fourth-order in space) by means of a time-slice discretization.
A detailed survey on all related literature would exceed the scope 
of this paper. Hence, we  confine ourselves to the papers mentioned above, 
and the references therein. 
  
In contrast to these approaches, we apply fully unstructured space-time 
FEM to the numerical solution of optimal control problems for linear 
and semilinear parabolic partial differential equations.
The fully unstructured space-time methods are especially suited for 
a forward-backward system since it is  one system of two coupled PDEs 
where the time is just another variable.

We start our investigation with a the standard space-time tracking optimal
control problem subject to a linear parabolic IBVP. The well-posedness of the
optimality system and of the discretized optimality system is studied by
Babu\v{s}ka's theorem. In particular, the discrete inf-sup (stability)
condition leads to asymptotically optimal discretization error estimates. 

For problems with a semilinear state equation or pointwise control 
constraints, we apply the semismooth Newton method, the currently most 
popular and powerful numerical technique for solving optimal control 
problems for PDEs; we refer to Ito and Kunisch \cite{KIKK2008}. In 
this Newton-type method, a sequence of forward-backward equations must 
be solved that easily exceed the storage capacity of standard computers, 
if the space dimension is larger than one. Parallel-in-time numerical 
techniques such as parareal methods, cf. Lions et. al. \cite{JLYMGT2015}, 
Ulbrich \cite{SU2007}, see also Gander \cite{LSTY:Gander:2015a}, can 
also overcome this difficulty.  However, they are not easy to implement. 
In this context, we also mention G\"otschel and Minion \cite{SGMM2019}, 
who apply parallel-in-time methods to the optimal control of the 3D heat 
equation and the 1D Nagumo equation.

In our examples, the level of difficulty is even higher, because our 
reaction-diffusion equations exhibit wave type solutions such as traveling 
wave fronts, spiral waves, or scroll rings. We refer, e.g., to  
Casas et. al. \cite{ECCRFT13,ECCRFT2015,ECCRFT2018},
where optimal control problems for FitzHugh-Nagumo or Schl\"ogl (Nagumo)
equations were solved in space dimensions one and two. Only in the spatially
one-dimensional case, the semismooth Newton method was applied,
cf. \cite{ECCRFT2015}, while two-dimensional problems were tackled by a
nonlinear conjugate gradient optimization method, partially invoking a 
model-predictive approximation. In contrast to the papers mentioned 
above, we directly solve the nonlinear optimality systems 
by the Newton or semismooth Newton method. Finally,  in each iteration, 
we solve one system of linear or non-linear algebraic equations.

The rest of the  paper is structured as follows. 
In Section~\ref{sec:prelim}, we introduce some notation and state some 
preliminary results on the solvability and numerical analysis of the 
parabolic initial-boundary value problem that serves as state equation 
in the optimal control problem studied in Section~\ref{sec:stl2}. 
In Section~\ref{sec:semilinearparabolicequations}, we consider the 
optimal control of semilinear parabolic initial-boundary value problem 
without and with pointwise box constraints on the control. We provide 
and discuss typical numerical examples for all optimal control 
problems investigated in the paper. Finally, some conclusions are 
drawn in Section \ref{sec:con}. 

\section{Preliminaries}
\label{sec:prelim}
The state problem, that appears as constraint in the optimal control 
problems, is given by the linear parabolic IBVP
\begin{equation}
\label{eqn:linearstateequation}
\partial_t u-\Delta_x u  =  z  \textup{ in } Q,\quad
u  =  0 \textup{ on } \Sigma,\quad
u = 0  \textup{ on } \Sigma_0,
\end{equation}
where $Q := \Omega \times (0,T)$, $\Sigma:= \partial \Omega \times (0,T)$,
$\Sigma_0:=\Omega\times\{0\}$. The spatial computational domain 
$\Omega \subset \mathbb{R}^d$, $d=1,2,3$, is supposed to be bounded 
and Lipschitz, $T > 0$ is the final time, $\partial_t$ denotes the 
partial time derivative, $\Delta_x = \sum_{i=1}^d \partial^2_{x_i}$ is 
the spatial Laplacian, and the source term $z$ on the right-hand 
side of the parabolic PDE serves as control.
For simplicity, we consider homogeneous initial and boundary conditions only.
It is clear that this simple model problem can be replaced by more advanced 
parabolic IBVPs as they appear in many practical applications such as 
instationary heat conduction, instationary diffusion, 2D eddy current 
simulation, tumor grow, or after Newton linearization of nonlinear 
parabolic IBVPs as considered in
Section~\ref{sec:semilinearparabolicequations}.
The weak solvability of such kind of parabolic IBVPs was studied in 
space-time Sobolev spaces by Ladyzhenskaya and co-workers 
\cite{LSTY:Ladyzhenskaya:1985a,
LSTY:LadyzhenskayaetSolonnikovUraltseva:1967a}, 
and in Bochner spaces of abstract functions, mapping the time 
interval $(0,T)$ to some Hilbert or Banach space, by Lions 
\cite{LSTY:Lions:1968a}, see also \cite{LSTY:Zeidler:1990a}.
Following the latter approach, the standard weak formulation
of the IBVP~\eqref{eqn:linearstateequation} in Bochner spaces 
of abstract functions reads as follows: Given $z \in Y^*$, 
find $u \in X_0$ such that
\begin{equation}
\label{eqn:stateequation} 
b(u,v) = \langle z, v \rangle_Q, \quad \forall v \in Y, 
\end{equation}
with the bilinear form $b(\cdot,\cdot): X_0 \times Y \to \mathbb{R}$,
\begin{equation}
\label{eqn:bilinearform_b(.,.)} 
b(u,v) := \int_Q \Big[
\partial_t u \, v + \nabla_x u \cdot \nabla_x v \Big] \, dx \, dt, \quad 
\forall (u,v) \in X_0 \times Y, 
\end{equation}
and the linear form $\langle z, \cdot \rangle_Q : Y \to \mathbb{R}$ 
with the duality pairing
\begin{equation}
\label{eqn:rhs(z,.)} 
\langle z, q \rangle_Q := \int_Q z \, v \, dx \, dt, \quad 
\forall  v \in Y, 
\end{equation}
as extension of the inner product in $L^2(Q)$. Similarly, the first 
integral in \eqref{eqn:bilinearform_b(.,.)} has to be understood as 
duality pairing as well. The Bochner spaces $X_0$ and $Y$ are 
specified as follows:
\begin{eqnarray*}
X_0 & := & L^2(0,T; H_0^1(\Omega)) \cap H_{0,}^1(0,T;H^{-1}(\Omega)) \\
  & = & \Big \{ v \in L^2(0,T; H_0^1(\Omega)) : \partial_t v \in 
        L^2(0,T;H^{-1}(\Omega)), \, v = 0 \mbox{ on }  \Sigma_0 \Big \},\\
Y & := & L^2(0,T;H_0^1(\Omega)), \qquad
Y^*:= L^2(0,T;H^{-1}(\Omega)),
\end{eqnarray*}
where $H_0^1(\Omega) :=  \{v \in H^1(\Omega):\, v = 0 \mbox{ on }  
\partial \Omega\}$, and $H^{-1}(\Omega) := H_0^1(\Omega)^*$. Note that 
we have $X_0 = \{ v \in W(0,T):\, v = 0 \mbox{ on }  \Sigma_0\}$ 
as used in \cite{LSTY:Lions:1968a}. The related norms are given by
\begin{eqnarray*}
\| u \|_{X_0} & := &
\Big(\| \partial_t u \|^2_{L^2(0,T;H^{-1}(\Omega))} + 
\| \nabla_x u \|^2_{L^2(Q)} \Big)^{1/2}, \\
\| v \|_Y & := & \| \nabla_x v \|_{L^2(Q)}, \\
\| \partial_t u \|_{Y^*} & = &
\| \partial_t u \|_{L^2(0,T;H^{-1}(\Omega))} =  
\| \nabla_x w_u \|_{L^2(Q)} = \| w_u \|_Y,
\end{eqnarray*}
and where $w_u \in Y$ is the unique solution of the variational formulation
\begin{equation}\label{eqn:Definition w H-1}
\int_Q \nabla_x w_u \cdot \nabla_x v \, dx \, dt = 
\langle \partial_t u , v \rangle_Q , \quad \forall v
\in Y;
\end{equation}
see \cite{OS15}. In fact, we have
\[
\| u \|_{X_0} = \Big[ \| w_u \|_Y^2 + \| u \|_Y^2 \Big]^{1/2} .
\]
The well-posedness of variational problems such as \eqref{eqn:stateequation} 
can be investigated by the Ne\u{c}as-Babu\v{s}ka theorem 
\cite{IB71,LSTY:Necas:1962a} that is sometimes also called the 
Banach-Ne\u{c}as-Babu\v{s}ka theorem  \cite{LSTY:ErnGuermond:2004a}
or the Babu\v{s}ka-Aziz theorem \cite{LSTY:BabuskaAziz:1972a}, see 
also \cite{LSTY:Braess:2007a}. This theorem states that the operator 
$B: X_0 \to Y^*$ generated by the bilinear form $b(\cdot,\cdot)$ 
is an isomorphism if and only if the following three conditions are 
fulfilled:
\begin{enumerate}
\item boundedness (continuity) of $b(\cdot,\cdot)$, i.e., there 
      exists a positive constant $\beta_1$:
      \begin{equation}\label{eqn:boundedness_b(.,.)}
      |b(u,v)| \leq \beta_1 \, \| u \|_{X_0} \| v \|_Y , \quad 
      \forall (u,v) \in X_0 \times Y;
      \end{equation}
\item inf-sup (stability) condition (surjectivity of $B^*$), 
      i.e., there exists a positive constant $\beta_2$
      such that
      \begin{equation}\label{eqn:infsup_b(.,.)}
      \inf\limits_{0 \neq u \in X_0}\sup\limits_{0 \neq v \in Y}
      \frac{b(u,v)}{\| u \|_{X_0} \| v \|_Y} \ge \beta_2; 
      \end{equation}
\item injectivity of $B^*$:
      \begin{equation}\label{eqn:injectivity_b(.,.)}
      \forall v \in Y \setminus \{0\} \; \exists 
      \widetilde{u} \in X_0: \; b(\widetilde{u},v) \neq 0.
      \end{equation}
\end{enumerate}
It is easy to show that the bilinear form \eqref{eqn:bilinearform_b(.,.)} 
is bounded with $\beta_1 = \sqrt{2}$. The inf-sup condition 
\eqref{eqn:infsup_b(.,.)} follows from \cite[Theorem 2.1]{OS15} with the 
stability constant $\beta_2 = 1/(2\sqrt{2})$. 
To prove \eqref{eqn:injectivity_b(.,.)}, for $v \in Y \setminus \{0\}$, 
we choose
\[
\widetilde{u}(x,t) = \int_0^t v(x,s) \, ds, \quad (x,t) \in Q .
\]
By definition, we have $\widetilde{u} \in X_0$, and
\[
b(\widetilde{u},v) = \| v \|^2_{L^2(Q)} +
\frac{1}{2} \, \| \nabla_x \widetilde{u}(T) \|^2_{L^2(\Omega)} > 0.
\]
Therefore, by the Ne\u{c}as-Babu\v{s}ka theorem, the variational problem 
\eqref{eqn:stateequation} is well-posed.

For the finite element discretization of the variational formulation 
\eqref{eqn:stateequation}, we introduce conforming space-time finite 
element spaces $X_{0,h} \subset X_0$ and $Y_h \subset Y$, 
where we assume $X_{0,h} \subseteq Y_h$. In particular, we may use 
$X_{0,h} = Y_h = S_h^1(Q_h) \cap X_0$ spanned by continuous and piecewise 
linear basis functions which are defined with respect to some 
admissible decomposition $\mathcal{T}_h(Q)$ of the space-time domain 
$Q$ into shape regular simplicial finite elements $\tau_\ell$, and which 
are zero at the initial time $t=0$ and at the lateral boundary $\Sigma$,
where $h$ denotes a suitable mesh-size parameter, see, e.g., 
\cite{LSTY:Braess:2007a,LSTY:ErnGuermond:2004a,OS15}.
Then the finite element approximation of 
\eqref{eqn:stateequation} is to find $u_h \in X_{0,h}$ such that
\begin{equation}\label{eqn:stateequation FEM}
b(u_h,v_h) = \langle z , v_h \rangle_Q, \quad \forall v_h \in Y_h .
\end{equation}
When replacing \eqref{eqn:Definition w H-1}
by its finite element approximation to find $w_{u,h} \in Y_h$ such that
\begin{equation}\label{eqn:Definition w H-1 FEM}
\int_Q \nabla_x w_{u,h} \cdot \nabla_x v_h \, dx \, dt =
\int_Q \partial_t u \, v_h \, dx \, dt, \quad \forall v_h \in Y_h,
\end{equation}
we can define a discrete norm
\[
\| u \|_{X_{0,h}} \, := \Big[ \| w_{u,h} \|_Y^2 + \| u \|_Y^2 \Big]^{1/2} \, .
\]
Due to the definition of $w_{u,h}$ as solution of the variational formulation
\eqref{eqn:Definition w H-1 FEM}, we conclude
\begin{equation}\label{Inequality wh w}
\| w_{u,h} \|_Y \leq \| w_u \|_Y \quad \mbox{for all} \; u \in X_0,
\end{equation}
while the opposite inequality is in general not true. 
As in the continuous case, see \eqref{eqn:infsup_b(.,.)},
we can prove a discrete inf-sup condition, see \cite[Theorem 3.1]{OS15},
\begin{equation}\label{eqn:bilinear form heat discrete stability}
\frac{1}{2\sqrt{2}} \, \| u_h \|_{X_{0,h}} \leq 
\sup\limits_{0 \neq v_h \in Y_h} \frac{b(u_h,v_h)}{\| v_h \|_Y} , \quad
\forall u_h \in X_{0,h} .
\end{equation}
Hence, from the discrete version of Ne\u{c}as-Babu\v{s}ka's theorem,
we conclude unique solvability of the Galerkin scheme 
\eqref{eqn:stateequation FEM}. Furthermore, we obtain the following
quasi-optimal error estimate, see \cite[Theorem 3.2]{OS15}:
\begin{equation}\label{eqn:stateequation FEM Cea}
\| u - u_h \|_{X_{0,h}} \leq 5 \inf\limits_{z_h \in X_{0,h}} \| u - z_h \|_{X_0} \, .
\end{equation}
In particular, when assuming $u \in H^2(Q)$, this finally results in the
energy error estimate, see \cite[Theorem 3.3]{OS15},
\begin{equation}
\| u - u_h \|_{L^2(0,T;H^1_0(\Omega))} \leq c \, h \, |u|_{H^2(Q)} \, .
\end{equation}
Once the basis is chosen, the finite element scheme 
\eqref{eqn:stateequation FEM} is nothing but a huge linear system 
of the form
\begin{equation}\label{eqn:FEsystem}
K_h \underline{u}_h = \underline{f}_h
\end{equation}
with a positive definite, but non-symmetric system matrix $K_h$ 
that can be generated together with the right-hand side $\underline{f}_h$
similar as in the elliptic case. 
The linear system \eqref{eqn:FEsystem} can efficiently be solved by means of 
the preconditioned GMRES method, see \cite{OSHY18,OSHY19}, 
where Algebraic Multigrid (AMG) preconditioning is used.
We also use AMG preconditioned GMRES 
as solver in all numerical experiments presented in this paper.
It is clear that the unstructured space-time approach to optimal 
control problems presented in this paper allows 
full space-time adaptivity and parallelization.

\section{Space-time tracking}
\label{sec:stl2}

\subsection{The model problem}
For a given target function $u_d \in L^2(Q)$ and a regularization 
parameter $\varrho > 0$, we consider the minimization of the cost functional
\begin{equation}
{\mathcal J}(u,z):=\frac{1}{2}\int_Q |u-u_d|^2 \, dx \, dt + 
\frac{1}{2}\varrho \, \|z\|_{L^2(Q)}^2 
\label{eqn:linmodspacetime}
\end{equation}
subject to the linear parabolic IBVP~\eqref{eqn:linearstateequation},
where the control $z$ is taken from $L^2(Q)$.

\subsection{The optimality system}
If a control $z$ is optimal with the associated state $u$, then the 
following first-order necessary optimality conditions must be 
satisfied: There is a unique solution $p$ of the 
{\em adjoint equation}
\begin{equation*}
-\partial_t p-\Delta_x p = u-u_d \textup{ in } Q, \quad
p = 0 \textup{ on } \Sigma, \quad
p = 0 \textup{ on } \Sigma_T := \Omega \times \{ T \},
\end{equation*}
such that the so-called {\em gradient equation}
\begin{equation*}
p +\varrho z  = 0 \quad \textup{ in } Q
\end{equation*}
is satisfied. When eliminating the control $z$, 
the following {\em optimality system} is necessary (and by convexity 
of the problem also sufficient) for the optimality of its solution $(u,p)$:
\begin{equation} \label{optimality_system}
\begin{array}{c}
\varrho \left[ \partial_t u-\Delta_x u \right] + p = 0 \textup{ in } Q, \quad
u = 0 \textup{ on } \Sigma, \quad
u = 0 \textup{ on } \Sigma_0, \\[1mm]
-\partial_t p-\Delta_x p = u-u_d \textup{ in } Q, \quad
p = 0 \textup{ on } \Sigma, \quad
p = 0 \textup{ on } \Sigma_T.
\end{array}
\end{equation}
The solution of this system exists and is unique, since the optimal 
control problem has a unique optimal solution; see \cite{LSTY:Lions:1968a}. 
This is due to the strict convexity of the functional $J$. If the 
solution $(u,p)$ is given, then $z = - p/\varrho$ is the optimal control.
The weak formulation of the optimality system \eqref{optimality_system} 
is to find $u \in X_0$ and $p \in X_T$ such that
\begin{equation}\label{sec:sl2:eq:weakforml2}
\begin{array}{rcl}
\displaystyle
\varrho \int_Q \Big[
\partial_t u \, v + \nabla_x u \cdot \nabla_x v \Big] dx \, dt 
+ \int_Q p \, v \, dx \, dt 
& = & 0, \\
\displaystyle
- \int_Q u \, q \, dx \, dt + \int_Q 
\Big[ - \partial_t p \, q + \nabla_x p \cdot \nabla_x q \Big] dx \, dt 
& = & \displaystyle
- \int_Q u_d \, q \, dx \, dt
\end{array}
\end{equation}
is satisfied for all $v,q\in Y$. Note that
\begin{eqnarray*}
X_T & := & L^2(0,T;H^1_0(\Omega)) \cap H^1_{,0}(0,T;H^{-1}(\Omega)) \\ & = &
\Big \{
p \in L^2(0,T;H^1_0(\Omega)) : \partial_t p \in L^2(0,T;H^{-1}(\Omega)), p = 0
\; \mbox{on} \; \Sigma_T \Big \} .
\end{eqnarray*}
An equivalent version of 
\eqref{sec:sl2:eq:weakforml2} is the saddle point problem to find 
$(u,p) \in X_0 \times X_T$ such that
\begin{equation}\label{sec:sl2:eqn:saddle}
{\mathcal B}(u,p;v,q) = - \langle u_d , q \rangle_{L^2(Q)}, \quad
\forall v,q \in Y ,
\end{equation}
where
\begin{eqnarray}\label{sec:sl2:eqn:def B}
{\mathcal B}(u,p;v,q) & = &
\varrho \int_Q \Big[ \partial_t u \, v +
\nabla_x u \cdot \nabla_x v \Big] dx \, dt +
\int_Q p \, v \, dx \, dt \\ & & \nonumber 
- \int_Q u \, q \, dx \, dt 
+ \int_Q \Big[ - \partial_t p \, q + 
\nabla_x p \cdot \nabla_x q \Big] dx \, dt 
\end{eqnarray}
is a bounded bilinear form for $(u,p) \in X_0 \times X_T$, and
$(v,q) \in Y \times Y$, i.e.,
\[
|{\mathcal B}(u,p;v,q)| \, \leq \,
c_{\mathcal{B}}(\varrho) \, \Big(
\| u \|_{X_0}^2 + \| p \|^2_{X_T} 
\Big)^{1/2} \Big( \| v \|_Y^2 + \| q \|^2_Y \Big)^{1/2}
\]
with some positive constant $c_{\mathcal{B}}(\varrho)$.

\begin{lemma}\label{Lemma inf sup L2}
The bilinear form as given in \eqref{sec:sl2:eqn:def B} satisfies the
stability condition
\[
\frac{1}{2\sqrt{2}}
\sqrt{\varrho \, \| u \|_{X_0}^2 + \| p \|_{X_T}^2}
\leq \sup\limits_{0 \neq (v,q) \in Y \times Y}
\frac{{\mathcal B}(u,p;v,q)}
{\sqrt{\varrho \, \| v \|_Y^2 + \| q \|_Y^2}} 
\]
for all $(u,p) \in X_0 \times X_T$.
\end{lemma}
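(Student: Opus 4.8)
The plan is to prove the inequality directly: for a given pair $(u,p)\in X_0\times X_T$ I would exhibit an explicit test pair $(v,q)\in Y\times Y$ for which ${\mathcal B}(u,p;v,q)$ already dominates the full energy $\varrho\,\|u\|_{X_0}^2+\|p\|_{X_T}^2$, while $\varrho\,\|v\|_Y^2+\|q\|_Y^2$ stays comparable to it. Motivated by the scalar inf-sup estimate \eqref{eqn:infsup_b(.,.)}, the natural choice for the two ``diagonal'' blocks of ${\mathcal B}$ in \eqref{sec:sl2:eqn:def B} is
\[
v:=u+w_u\in Y,\qquad q:=p-w_p\in Y,
\]
where $w_u\in Y$ is the representative of $\partial_t u$ from \eqref{eqn:Definition w H-1}, and $w_p\in Y$ is the analogous representative of $\partial_t p$, i.e.\ $\int_Q\nabla_x w_p\cdot\nabla_x\phi\,dx\,dt=\langle\partial_t p,\phi\rangle_Q$ for all $\phi\in Y$. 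Recall that $\|w_u\|_Y=\|\partial_t u\|_{Y^*}$ with $\|u\|_{X_0}^2=\|w_u\|_Y^2+\|u\|_Y^2$, and, by the same token, $\|w_p\|_Y=\|\partial_t p\|_{Y^*}$ with $\|p\|_{X_T}^2=\|w_p\|_Y^2+\|p\|_Y^2$.

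For the diagonal contributions I would carry out the short integration by parts in time: testing the definition of $w_u$ with $\phi=w_u$ and with $\phi=u$ (admissible since $X_0\subset Y$) and using $u(0)=0$ gives $b(u,u+w_u)=\|w_u\|_Y^2+\|u\|_Y^2+\|u(T)\|_{L^2(\Omega)}^2\ge\|u\|_{X_0}^2$; the time-reversed computation, using $p(T)=0$, gives $\int_Q[-\partial_t p\,q+\nabla_x p\cdot\nabla_x q]\,dx\,dt=\|w_p\|_Y^2+\|p\|_Y^2+\|p(0)\|_{L^2(\Omega)}^2\ge\|p\|_{X_T}^2$. The step I expect to be the crux is showing that the two zero-order ($L^2(Q)$) cross terms of ${\mathcal B}$ cancel for exactly this choice. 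Since the two $\int_Q u\,p$ contributions are identical,
\[
\int_Q p\,(u+w_u)\,dx\,dt-\int_Q u\,(p-w_p)\,dx\,dt=\int_Q p\,w_u\,dx\,dt+\int_Q u\,w_p\,dx\,dt .
\]
Introducing $\psi_u:=(-\Delta_x)^{-1}u$ and $\psi_p:=(-\Delta_x)^{-1}p$ (spatial inverse Dirichlet Laplacian), one has $w_u=\partial_t\psi_u$, $w_p=\partial_t\psi_p$, and $\psi_u,\psi_p\in H^1(0,T;H_0^1(\Omega))$; integration by parts in $x$ turns each cross term into an $H_0^1(\Omega)$-scalar product, so that
\[
\int_Q p\,w_u\,dx\,dt+\int_Q u\,w_p\,dx\,dt=\int_0^T\frac{d}{dt}\bigl(\psi_u(t),\psi_p(t)\bigr)_{H_0^1(\Omega)}\,dt=\bigl(\psi_u(T),\psi_p(T)\bigr)_{H_0^1(\Omega)}-\bigl(\psi_u(0),\psi_p(0)\bigr)_{H_0^1(\Omega)} ,
\]
and this vanishes because $\psi_u(0)=(-\Delta_x)^{-1}u(0)=0$ and $\psi_p(T)=(-\Delta_x)^{-1}p(T)=0$ — the complementary initial and terminal conditions built into $X_0$ and $X_T$. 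The product rule used here is elementary, both factors lying in $H^1(0,T;H_0^1(\Omega))$.

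Combining the three observations yields ${\mathcal B}(u,p;v,q)\ge\varrho\,\|u\|_{X_0}^2+\|p\|_{X_T}^2$, while $\varrho\,\|v\|_Y^2+\|q\|_Y^2=\varrho\,\|u+w_u\|_Y^2+\|p-w_p\|_Y^2\le 2\bigl(\varrho\,\|u\|_{X_0}^2+\|p\|_{X_T}^2\bigr)$ by the triangle inequality and the norm identities recalled above. If $(u,p)\neq(0,0)$ this forces $(v,q)\neq(0,0)$, so $(v,q)$ is admissible in the supremum, and dividing gives $\sup_{(v,q)}{\mathcal B}(u,p;v,q)/\sqrt{\varrho\,\|v\|_Y^2+\|q\|_Y^2}\ge(1/\sqrt2)\sqrt{\varrho\,\|u\|_{X_0}^2+\|p\|_{X_T}^2}$, which is even slightly stronger than the asserted bound with constant $1/(2\sqrt2)$; the case $(u,p)=(0,0)$ is trivial. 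The only genuinely technical point along the way is the justification of the $H_0^1$-valued product rule in the cancellation step, and that becomes immediate after passing from $u,p$ to $\psi_u,\psi_p$.
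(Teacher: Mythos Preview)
Your proof is correct and follows the same overall strategy as the paper: the same test pair $v=u+w_u$, $q=p+w_p$ (your sign convention on $w_p$ differs, but the resulting $q$ is identical). Two points of your argument differ from the paper's and are worth noting. First, for the cancellation of the zero-order cross terms, the paper introduces time primitives $v_u(x,s)=\int_0^s w_u(x,t)\,dt$ and $v_p(x,t)=\int_t^T w_p(x,s)\,ds$ and computes both $\langle u,w_p\rangle_{L^2(Q)}$ and $\langle p,w_u\rangle_{L^2(Q)}$ as the same double time integral; your route via $\psi_u=(-\Delta_x)^{-1}u$, $\psi_p=(-\Delta_x)^{-1}p\in H^1(0,T;H_0^1(\Omega))$ and the product rule is shorter and arguably more transparent. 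Second, and more substantively, in the diagonal blocks the paper keeps the mixed term $\int_Q\nabla_x u\cdot\nabla_x w_u\,dx\,dt$ unsimplified and estimates it by Cauchy--Schwarz from below by $-\|u\|_Y\|w_u\|_Y$, which costs a factor $\tfrac12$; you instead test the defining relation for $w_u$ with $\phi=u$ to identify this term as $\langle\partial_t u,u\rangle_Q=\tfrac12\|u(T)\|_{L^2(\Omega)}^2\ge 0$, which yields $b(u,u+w_u)\ge\|u\|_{X_0}^2$ directly. This is why you end up with the sharper constant $1/\sqrt{2}$ rather than the paper's $1/(2\sqrt{2})$; both establish the lemma as stated.
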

\begin{proof}
For $u \in X_0$, we define $w_u \in Y$ as the unique solution 
of the elliptic variational problem
\[
\int_Q \nabla_x w_u \cdot \nabla_x v \, dx \, dt=
\int_Q \partial_t u \, v \, dx \, dt, \quad
\forall v \in Y \, .
\]
As in \cite{OS15}, we then have
\[
\| \partial_t u \|_{Y^*} = \| w_u \|_Y \, , \quad \mbox{i.e.}, \quad
\| u \|_{X_0}^2 = \| w_u \|_Y^2 + \| u \|_Y^2 \, .
\]
In the same way, we define $w_p \in Y$ as the unique solution of the
variational problem
\[
\int_Q \nabla_x w_p \cdot \nabla_x q \, dx \, dt =
- \int_Q \partial_t p \, q \, dx \, dt, \quad \forall q \in Y, 
\]
where we conclude
\[
\| \partial_t p \|_{Y^*} = \| w_p \|_Y \, , \quad \mbox{i.e.}, \quad
\| p \|^2_{X_T} = \| w_p \|^2_Y + \| p \|^2_Y \, .
\]
For $w_p \in Y$ and for almost all $x \in \Omega$, we define
\[
v_p(x,t) = \int_t^T w_p(x,s) \, ds,
\]
satisfying $v_p \in X_T$, i.e., $v_p=0$ on $\Sigma_T$. Using 
$w_p = - \partial_t v_p$, and integration by parts in time, 
and $u=0$ on $\Sigma_0$, we obtain
\begin{eqnarray*}
\langle u , w_p \rangle_{L^2(Q)} & = & 
- \int_0^T \int_\Omega u(x,t) \, \partial_t v_p(x,t) \, dx \, dt \\ & = &
- \left. \int_\Omega u(x,t) \, v_p(x,t) \, dx \right|_0^T +
\int_0^T \int_\Omega \partial_t u(x,t) \, v_p(x,t) \, dx \, dt  \\ & = &
\int_0^T \int_\Omega \nabla_x w_u(x,t) \cdot \nabla_x v_p(x,t) \, dx \, dt 
\\ & = &
\int_0^T \int_t^T \int_\Omega \nabla_x w_u(x,t) \cdot \nabla_x w_p(x,s) 
\, dx \, ds \, dt \, .
\end{eqnarray*}
Analogously, defining
\[
v_u(x,s) = \int_0^s w_u(x,t) \, dt, \quad x \in \Omega, \; s \in (0,T),
\]
we get
\begin{eqnarray*}
\langle p , w_u \rangle_{L^2(Q)} & = & 
\int_0^T \int_\Omega p(x,s) \, \partial_s v_u(x,s) \, dx \, ds \\ & = &
\left. \int_\Omega p(x,s) \, v_u(x,s) \, dx \right|_0^T -
\int_0^T \int_\Omega \partial_s p(x,s) \, v_u(x,s) \, dx \, ds \\ & = &
\int_0^T \int_\Omega \nabla_x w_p(x,s) \cdot \nabla_x v_u(x,s) \, dx \, ds 
\\ & = &
\int_0^T \int_0^s \int_\Omega \nabla_x w_p(x,s) \cdot \nabla_x w_u(x,t) 
\, dx \, dt \, ds \\ & = &
\int_0^T \int_t^T \int_\Omega \nabla_x w_p(x,s) \cdot \nabla_x w_u(x,t) 
\, dx \, ds \, dt .
\end{eqnarray*}
Hence, we have
\[
\langle u , w_p \rangle_{L^2(Q)} = \langle p , w_u \rangle_{L^2(Q)}.
\]
For $v = u + w_u \in Y$ and $q = p + w_p \in Y$, we now obtain
\begin{eqnarray*}
{\mathcal B}(u,p;v,q) & = & \\ & & \hspace*{-2.5cm} = \,
\varrho \int_Q \Big[ \partial_t u \, (u+w_u) +
\nabla_x u \cdot \nabla_x (u+w_u) \Big] dx \, dt +
\int_Q p \, (u+w_u) \, dx \, dt \\ & & \hspace*{-2.2cm} 
- \int_Q u \, (p+w_p) \, dx \, dt 
+ \int_Q \Big[ - \partial_t p \, (p+w_p) + 
\nabla_x p \cdot \nabla_x (p+w_p) \Big] dx \, dt 
\\ & & \hspace*{-2.5cm} = \,
\varrho \int_Q \Big[ \partial_t u \, (u+w_u) +
\nabla_x u \cdot \nabla_x (u+w_u) \Big] dx \, dt \\ & & 
\hspace*{-1.2cm} +
\int_Q \Big[ - \partial_t p \, (p+w_p) + 
\nabla_x p \cdot \nabla_x (p+w_p) \Big] dx \, dt \\ &&
\hspace*{-2.5cm} =
\varrho \left[
\frac{1}{2} \, \| u(T) \|^2_{L^2(\Omega)} +
\| w_u \|^2_Y + \| u \|_Y^2 + 
\int_Q \nabla_x u \cdot \nabla_x w_u \, dx \, dt
\right] \\ && \hspace*{-1.2cm} + \left[
\frac{1}{2} \| p(0) \|^2_{L^2(\Omega)} + \| w_p \|^2_Y + \| p \|_Y^2 +
\int_Q \nabla_x p \cdot \nabla_x w_p \, dx \, dt \right] \\ &&
\hspace*{-2.5cm} \geq
\varrho \, \Big[
\| w_u \|^2_Y + \| u \|_Y^2 - \| u \|_Y \| w_u \|_Y
\Big] + \Big[ \| w_p \|^2_Y + \| p \|_Y^2 - \| p \|_Y \| w_p \|_Y \Big] \\ &&
\hspace*{-2.5cm} \geq
\frac{\varrho}{2} \, \Big[ \| w_u \|^2_Y + \| u \|_Y^2 \Big] + 
\frac{1}{2} \, \Big[
\| w_p \|^2_Y + \| p \|_Y^2 \Big] \\ && \hspace*{-2.5cm}
= \frac{1}{2} \, \Big[ \varrho \, \| u \|^2_{X_0} + \| p \|^2_{X_T} \Big] \, .
\end{eqnarray*}
Moreover, using the triangle and H\"older's inequality, we get
\[
\| v \|_Y = \| u + w_u \|_Y \leq \| u \|_Y + \| w_u \|_Y \leq
\sqrt{2} \, \sqrt{\| u \|_Y^2 + \| w_u \|_Y^2} =
\sqrt{2} \, \| u \|_{X_0},
\]
as well as
\[
\| q \|_Y \leq \sqrt{2} \, \sqrt{\| p \|_Y^2 + \| w_p \|_Y^2} =
\sqrt{2} \, \| p \|_{X_T} .
\]
With this, we can now estimate
\begin{eqnarray*}
\sqrt{\varrho \, \|v\|_Y^2 + \| q \|_Y^2} \, 
\sqrt{\varrho \, \| u \|_{X_0}^2 + \| p \|_{X_T}^2} & \leq & \sqrt{2} \,
\Big(
\varrho \, \| u \|_{X_0}^2 + \| p \|_{X_T}^2 
\Big) \\ & \leq & 2 \sqrt{2} \, {\mathcal B}(u,p;v,q),
\end{eqnarray*}
which implies the stability condition as stated.
\end{proof}

\begin{lemma}
For all $v,q \in Y$, we have the injectivity condition
\[
\sup\limits_{(u,p) \in X_0 \times X_T} {\mathcal B}(u,p;v,q) > 0 \, .
\]
\end{lemma}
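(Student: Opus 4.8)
The plan is to exhibit, for each $(v,q)\in(Y\times Y)\setminus\{(0,0)\}$ (the trivial case $(v,q)=(0,0)$ being tacitly excluded, just as in the scalar injectivity condition \eqref{eqn:injectivity_b(.,.)}), an explicit pair $(u,p)\in X_0\times X_T$ for which $\mathcal B(u,p;v,q)>0$. In the spirit of the injectivity argument already carried out for the single parabolic equation in Section~\ref{sec:prelim}, I would choose the two time-antiderivatives
\[
\widetilde u(x,t):=\int_0^t v(x,s)\,ds,\qquad \widetilde p(x,t):=\int_t^T q(x,s)\,ds,\qquad (x,t)\in Q .
\]
A first, routine step is the regularity bookkeeping: since $\partial_t\widetilde u=v$ and $\partial_t\widetilde p=-q$ lie in $Y\subset L^2(0,T;H^{-1}(\Omega))$, while $\widetilde u,\widetilde p\in L^2(0,T;H^1_0(\Omega))$, and since $\widetilde u=0$ on $\Sigma_0$ and $\widetilde p=0$ on $\Sigma_T$ by construction, one gets $\widetilde u\in X_0$ and $\widetilde p\in X_T$.

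Next I would substitute $(\widetilde u,\widetilde p)$ into the bilinear form \eqref{sec:sl2:eqn:def B} and evaluate the four integrals. Using $\partial_t\widetilde u=v$ together with the identity $\int_Q\nabla_x\widetilde u\cdot\nabla_x\partial_t\widetilde u\,dx\,dt=\tfrac12\|\nabla_x\widetilde u(T)\|^2_{L^2(\Omega)}$ (valid because $t\mapsto\tfrac12\|\nabla_x\widetilde u(t)\|^2_{L^2(\Omega)}$ is absolutely continuous with $\widetilde u(0)=0$, exactly as in the scalar computation in Section~\ref{sec:prelim}), the $\varrho$-weighted block equals $\varrho\big(\|v\|^2_{L^2(Q)}+\tfrac12\|\nabla_x\widetilde u(T)\|^2_{L^2(\Omega)}\big)$; symmetrically, with $\partial_t\widetilde p=-q$ and $\widetilde p(T)=0$, the $p$-block equals $\|q\|^2_{L^2(Q)}+\tfrac12\|\nabla_x\widetilde p(0)\|^2_{L^2(\Omega)}$. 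The decisive observation is that the two remaining off-diagonal terms cancel: integration by parts in time in $\int_Q\widetilde p\,v\,dx\,dt=\int_Q\widetilde p\,\partial_t\widetilde u\,dx\,dt$, using $\widetilde u(0)=0$, $\widetilde p(T)=0$ and $\partial_t\widetilde p=-q$, gives $\int_Q\widetilde p\,v\,dx\,dt=\int_Q\widetilde u\,q\,dx\,dt$, so that the terms $\int_Q p\,v\,dx\,dt$ and $-\int_Q u\,q\,dx\,dt$ of $\mathcal B$ sum to zero for this choice. Collecting the contributions,
\begin{align*}
\mathcal B(\widetilde u,\widetilde p;v,q) &= \varrho\Big(\|v\|^2_{L^2(Q)}+\tfrac12\|\nabla_x\widetilde u(T)\|^2_{L^2(\Omega)}\Big)+\|q\|^2_{L^2(Q)}+\tfrac12\|\nabla_x\widetilde p(0)\|^2_{L^2(\Omega)} \\
&\geq \varrho\,\|v\|^2_{L^2(Q)}+\|q\|^2_{L^2(Q)} > 0 ,
\end{align*}
the last inequality because $(v,q)\neq(0,0)$. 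Hence $\sup_{(u,p)\in X_0\times X_T}\mathcal B(u,p;v,q)>0$, which is the claim.

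I do not expect a genuine obstacle. The only points that need a little care are the regularity assertions $\widetilde u\in X_0$, $\widetilde p\in X_T$ and the justification of the integration by parts in time (both already established in Section~\ref{sec:prelim}). The one idea that makes the estimate come out cleanly is to give $\widetilde u$ and $\widetilde p$ the \emph{same} scaling, so that the coupling terms $\int_Q p\,v$ and $-\int_Q u\,q$ annihilate each other and only the two nonnegative diagonal blocks remain; any mismatched scaling would leave an uncontrolled indefinite cross term.
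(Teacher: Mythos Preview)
Your proposal is correct and follows essentially the same approach as the paper: the same antiderivative test pair $(\widetilde u,\widetilde p)$ is used, the same cancellation of the two coupling terms is exploited, and the same final positive expression is obtained. The only cosmetic difference is that you verify the cancellation via a single integration by parts, whereas the paper rewrites both cross terms as the common double integral $\int_0^T\!\int_t^T\!\int_\Omega v\,q\,dx\,ds\,dt$; these are equivalent.
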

\begin{proof}
For $v \in Y$ and for almost all $x \in \Omega$, $s \in (0,T)$,
we define
\[
u_v(x,s) = \int_0^s v(x,t) \, dt, \quad \mbox{i.e.,} \; u_v \in X_0,
\]
while, for $q \in Y$ and for almost all $x \in \Omega$, $t \in (0,T)$,
we define
\[
p_q(x,t) = \int_t^T q(x,s) \, ds, \quad \mbox{i.e.,} \; p_q \in X_T .
\]
With this we have
\begin{eqnarray*}
{\mathcal B}(u_v,p_q;v,q) & = & 
\varrho \int_Q \Big[ \partial_s u_v \, v +
\nabla_x u_v \cdot \nabla_x v \Big] dx \, ds +
\int_Q p_q \, v \, dx \, dt \\ & & 
- \int_Q u_v \, q \, dx \, ds 
+ \int_Q \Big[ - \partial_t p_q \, q + 
\nabla_x p_q \cdot \nabla_x q \Big] dx \, dt 
\\ & & \hspace*{-2cm} = \,
\varrho \int_Q \Big[ v^2 +
\nabla_x u_v \cdot \nabla_x \partial_s u_v \Big] dx \, dt +
\int_0^T \int_t^T \int_\Omega q \, v \, dx \, ds \, dt \\ & & +
\int_Q \Big[ q^2 - 
\nabla_x p_q \cdot \nabla_x \partial_tp_q \Big] dx \, dt - 
\int_0^T \int_0^s \int_\Omega v \, q \, dx \, dt \, ds \\ & & \hspace*{-2cm} = \,
\varrho \, \Big[ 
\| v \|^2_{L^2(Q)} + \frac{1}{2} \, \| \nabla_x u_v(T) \|^2_{L^2(\Omega)} \Big]
+ \| q \|^2_{L^2(Q)} + \frac{1}{2} \, \| \nabla_x p_q(0) \|^2_{L^2(\Omega)}
\\ & & \hspace*{-2cm} > \, 0,
\end{eqnarray*}
which concludes the proof.
\end{proof}

\noindent
Now, as a consequence of the Ne\v{c}as-Babu\v{s}ka theorem, we are in the
position to state the main result of this subsection:

\begin{theorem}
For given $u_d \in L^2(Q)$,
the saddle point problem \eqref{sec:sl2:eqn:saddle} admits a unique
solution $(u,p) \in X_0 \times X_T$.
\end{theorem}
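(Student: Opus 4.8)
The statement is an immediate corollary of the boundedness of $\mathcal{B}$ together with the two lemmas just proved, via the Ne\v{c}as-Babu\v{s}ka theorem recalled in Section~\ref{sec:prelim}. The plan is to cast \eqref{sec:sl2:eqn:saddle} in Petrov-Galerkin form $\mathcal{B}(u,p;v,q)=\langle F,(v,q)\rangle$ with trial space $X:=X_0\times X_T$ equipped with the norm $\|(u,p)\|_X:=(\varrho\,\|u\|_{X_0}^2+\|p\|_{X_T}^2)^{1/2}$, and test space $V:=Y\times Y$ equipped with $\|(v,q)\|_V:=(\varrho\,\|v\|_Y^2+\|q\|_Y^2)^{1/2}$. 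Both $X$ and $V$ are Hilbert spaces, being products of Hilbert spaces, so the abstract theorem applies once its three hypotheses and the boundedness of the right-hand side functional are verified.

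First I would record boundedness: the displayed inequality preceding Lemma~\ref{Lemma inf sup L2} is exactly $|\mathcal{B}(u,p;v,q)|\le c_{\mathcal{B}}(\varrho)\,\|(u,p)\|_X\,\|(v,q)\|_V$ up to the (purely $\varrho$-dependent) equivalence of $(\varrho\,\|u\|_{X_0}^2+\|p\|_{X_T}^2)^{1/2}$ with $(\|u\|_{X_0}^2+\|p\|_{X_T}^2)^{1/2}$. Second, the inf-sup (stability) condition is precisely Lemma~\ref{Lemma inf sup L2}, which in the product notation reads $\inf_{0\neq(u,p)\in X}\sup_{0\neq(v,q)\in V}\mathcal{B}(u,p;v,q)/(\|(u,p)\|_X\|(v,q)\|_V)\ge 1/(2\sqrt{2})>0$. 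Third, the injectivity of the adjoint operator $\mathcal{B}^*$, that is, the non-degeneracy of $\mathcal{B}$ in the test space, is exactly the second lemma above: for every $0\neq(v,q)\in V$ there is $(u,p)\in X$ with $\mathcal{B}(u,p;v,q)>0$.

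It then remains to observe that $(v,q)\mapsto -\langle u_d,q\rangle_{L^2(Q)}$ is a bounded linear functional on $V$: since $u_d\in L^2(Q)$, Cauchy-Schwarz and the Friedrichs (Poincar\'e) inequality $\|q\|_{L^2(Q)}\le c_F\,\|\nabla_x q\|_{L^2(Q)}=c_F\,\|q\|_Y$ give $|\langle u_d,q\rangle_{L^2(Q)}|\le c_F\,\|u_d\|_{L^2(Q)}\,\|q\|_Y\le (c_F/\sqrt{\varrho})\,\|u_d\|_{L^2(Q)}\,\|(v,q)\|_V$. With boundedness, the inf-sup condition, and the non-degeneracy in the test space in hand, the Ne\v{c}as-Babu\v{s}ka theorem guarantees that the operator induced by $\mathcal{B}$ is an isomorphism from $X_0\times X_T$ onto $(Y\times Y)^*$; hence \eqref{sec:sl2:eqn:saddle} has a unique solution $(u,p)\in X_0\times X_T$, together with the a priori bound $\|(u,p)\|_X\le 2\sqrt{2}\,(c_F/\sqrt{\varrho})\,\|u_d\|_{L^2(Q)}$.

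There is essentially no remaining obstacle: all the analytic content --- the identity $\langle u,w_p\rangle_{L^2(Q)}=\langle p,w_u\rangle_{L^2(Q)}$, the choice $v=u+w_u$, $q=p+w_p$ driving the inf-sup estimate, and the time-primitive test functions $u_v$, $p_q$ for the non-degeneracy --- has already been carried out in the two lemmas. The only points requiring a line of care are that the weighted combined norm appearing in Lemma~\ref{Lemma inf sup L2} is precisely the norm in which one wants well-posedness (so that the stability constant $1/(2\sqrt{2})$ is robust in $\varrho$), and that this norm is equivalent to $(\|u\|_{X_0}^2+\|p\|_{X_T}^2)^{1/2}$ used implicitly in the boundedness estimate; neither is an obstacle, and the proof is merely the assembly of the Ne\v{c}as-Babu\v{s}ka hypotheses.
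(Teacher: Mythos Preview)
Your proposal is correct and follows exactly the paper's approach: the theorem is stated there as an immediate consequence of the Ne\v{c}as--Babu\v{s}ka theorem, with boundedness, the inf--sup condition (Lemma~\ref{Lemma inf sup L2}), and the injectivity condition (the subsequent lemma) as the three ingredients. One harmless slip: since the right-hand side acts only on $q$, you have $\|q\|_Y \le \|(v,q)\|_V$ directly, so the factor $1/\sqrt{\varrho}$ in your bound on the data functional is unnecessary (and would in fact be wrong for $\varrho>1$).
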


\noindent
Note that for $u_d \in Y^*$ the saddle point formulation
\eqref{sec:sl2:eqn:saddle} defines an isomorphism from
$X_0 \times X_T$ onto $Y^* \times Y^*$.

\subsection{Discretization of the optimality system}
For the numerical solution of the saddle point problem 
\eqref{sec:sl2:eqn:saddle}, we use the conforming space-time
finite element spaces $X_{0,h} = Y_{0,h} = S_h^1(Q_h) \cap X_0$, 
and we introduce the space $X_{T,h} = Y_{T,h} = S_h^1(Q_h) \cap X_T$ of
piecewise linear and continuous basis functions which are zero at the 
final time $T$.
Then the finite element approximation of \eqref{sec:sl2:eqn:saddle}
is to find $(u_h,p_h) \in X_{0,h} \times X_{T,h}$ such that
\begin{equation}\label{sec:sl2:eqn saddle FEM}
{\mathcal B}(u_h,p_h;v_h,q_h) =  - \langle u_d , q \rangle_{L^2(Q)} , \quad
\forall (v_h,q_h) \in Y_{0,h} \times Y_{T,h}.
\end{equation}
To ensure unique solvability of \eqref{sec:sl2:eqn saddle FEM}, we need to
establish a discrete inf-sup stability condition as follows, but which is
formulated in $Y \times Y$, instead of $X_0 \times X_T$ as used in the
continuous case.

\begin{lemma}
For all $(u_h,p_h) \in X_{0,h} \times X_{T,h}$, 
there holds the discrete inf--sup stability condition
\[
\frac{1}{2 \sqrt{2}} \,
\sqrt{\varrho \, \| u_h \|_Y^2 + \| p_h \|_Y^2}
\leq \sup\limits_{0 \neq (v_h,q_h) \in Y_{0,h} \times Y_{T,h}}
\frac{{\mathcal B}(u_h,p_h;v_h,q_h)}
{\sqrt{\varrho \, \| v_h \|_Y^2 + \| q_h \|_Y^2}} \, .
\]
\end{lemma}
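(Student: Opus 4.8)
The plan is to mimic the continuous proof of Lemma~\ref{Lemma inf sup L2}, but to work entirely in the norm induced by $Y$ and its discrete surrogates, exploiting the inclusions $X_{0,h}\subseteq Y_{0,h}$ and $X_{T,h}\subseteq Y_{T,h}$. Given $(u_h,p_h)\in X_{0,h}\times X_{T,h}$, I would first introduce the discrete auxiliary functions $w_{u_h,h}\in Y_{0,h}$ and $w_{p_h,h}\in Y_{T,h}$ solving the Galerkin projections of the elliptic problems $\int_Q\nabla_x w_{u_h,h}\cdot\nabla_x v_h\,dx\,dt=\int_Q\partial_t u_h\,v_h\,dx\,dt$ for all $v_h\in Y_{0,h}$, and analogously for $w_{p_h,h}$ with $-\partial_t p_h$ on the right-hand side. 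By testing with $v_h=w_{u_h,h}$ one gets $\|w_{u_h,h}\|_Y^2=\langle\partial_t u_h,w_{u_h,h}\rangle_{L^2(Q)}$, and, crucially, since $w_{u_h,h}\in Y_{0,h}$ vanishes at $t=0$ while $u_h$ vanishes at $t=0$ as well, an integration by parts in time shows $\langle\partial_t u_h,u_h\rangle_{L^2(Q)}=\tfrac12\|u_h(T)\|_{L^2(\Omega)}^2\ge0$; the corresponding identity for $p_h$ gives $\langle-\partial_t p_h,p_h\rangle_{L^2(Q)}=\tfrac12\|p_h(0)\|_{L^2(\Omega)}^2\ge0$.

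Next I would choose the discrete test pair $v_h=u_h+w_{u_h,h}\in Y_{0,h}$ and $q_h=p_h+w_{p_h,h}\in Y_{T,h}$, which is legitimate precisely because of the assumed inclusions. Substituting into $\mathcal B$ and using the four identities above, the cross coupling terms $\int_Q p_h\,(u_h+w_{u_h,h})\,dx\,dt$ and $-\int_Q u_h\,(p_h+w_{p_h,h})\,dx\,dt$ should be arranged to cancel the same way as in the continuous case. The key algebraic fact needed here is the discrete analogue of $\langle u,w_p\rangle_{L^2(Q)}=\langle p,w_u\rangle_{L^2(Q)}$; I expect this to follow by writing both sides via the defining variational equations of $w_{u_h,h}$ and $w_{p_h,h}$ — using $p_h\in X_{T,h}\subseteq Y_{T,h}$ and $u_h\in X_{0,h}\subseteq Y_{0,h}$ as admissible test functions — together with the integration-by-parts/time-antiderivative trick, noting that the antiderivatives $\int_0^s w_{u_h,h}$ and $\int_t^T w_{p_h,h}$ need not be discrete, but they are only used inside $L^2(Q)$ pairings, not as Galerkin test functions. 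After cancellation one is left with
\[
\mathcal B(u_h,p_h;v_h,q_h)\ge \varrho\Big[\|w_{u_h,h}\|_Y^2+\|u_h\|_Y^2+\int_Q\nabla_x u_h\cdot\nabla_x w_{u_h,h}\,dx\,dt\Big]+\Big[\|w_{p_h,h}\|_Y^2+\|p_h\|_Y^2+\int_Q\nabla_x p_h\cdot\nabla_x w_{p_h,h}\,dx\,dt\Big],
\]
and a Cauchy--Schwarz plus Young estimate on the mixed gradient terms yields the lower bound $\tfrac12\big[\varrho(\|w_{u_h,h}\|_Y^2+\|u_h\|_Y^2)+\|w_{p_h,h}\|_Y^2+\|p_h\|_Y^2\big]\ge\tfrac12\big[\varrho\|u_h\|_Y^2+\|p_h\|_Y^2\big]$, where in the last step I simply discard the nonnegative $w$-terms.

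For the denominator I would bound $\|v_h\|_Y=\|u_h+w_{u_h,h}\|_Y\le\|u_h\|_Y+\|w_{u_h,h}\|_Y\le\sqrt2\,\sqrt{\|u_h\|_Y^2+\|w_{u_h,h}\|_Y^2}$ and likewise for $q_h$; combining the two and again discarding the $w$-contributions gives $\sqrt{\varrho\|v_h\|_Y^2+\|q_h\|_Y^2}\le\sqrt2\,\sqrt{\varrho(\|u_h\|_Y^2+\|w_{u_h,h}\|_Y^2)+\|p_h\|_Y^2+\|w_{p_h,h}\|_Y^2}$. Multiplying the numerator lower bound and denominator upper bound through as in the continuous proof, the factors of $\tfrac12$ and $\sqrt2$ assemble to the constant $1/(2\sqrt2)$, proving the claim. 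The main obstacle I anticipate is the discrete symmetry identity $\langle u_h,w_{p_h,h}\rangle_{L^2(Q)}=\langle p_h,w_{u_h,h}\rangle_{L^2(Q)}$: unlike in the continuous case one cannot freely use the time-antiderivative of $w_{p_h,h}$ as a test function, so one must verify carefully that the chain of equalities goes through using only $u_h$ and $p_h$ themselves (which \emph{are} in the discrete spaces) as test functions in the $w$-equations, and that the boundary terms at $t=0$ and $t=T$ vanish thanks to the correct homogeneous endpoint conditions built into $X_{0,h}$ and $X_{T,h}$. If this identity fails to be exact in the discrete setting, one would instead have to carry the discrepancy through and absorb it, which would change the constant; I expect, however, that it holds exactly because the antiderivatives only ever appear inside plain $L^2(Q)$ integrals, never as Galerkin arguments.
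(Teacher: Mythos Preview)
Your overall strategy is right, but the step you yourself flag as ``the main obstacle'' is a genuine gap, and your proposed resolution does not work. In the continuous proof of Lemma~\ref{Lemma inf sup L2}, the identity $\langle u,w_p\rangle_{L^2(Q)}=\langle p,w_u\rangle_{L^2(Q)}$ is obtained by forming the time-antiderivative $v_p(x,t)=\int_t^T w_p(x,s)\,ds$, integrating by parts to get $\langle u,w_p\rangle=\langle \partial_t u,v_p\rangle$, and then \emph{using $v_p$ as a test function} in the variational definition of $w_u$ to replace $\langle\partial_t u,v_p\rangle$ by $\langle\nabla_x w_u,\nabla_x v_p\rangle$. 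So the antiderivative \emph{is} used as a Galerkin argument, contrary to your claim. In the discrete setting $v_{p,h}(x,t)=\int_t^T w_{p_h,h}(x,s)\,ds$ is piecewise quadratic in $t$ and hence not in $Y_{0,h}$, so this step fails. Your alternative idea of testing with $u_h$ and $p_h$ themselves also breaks down: the defining equation for $w_{u_h,h}$ admits only test functions in $Y_{0,h}$, but $p_h\in Y_{T,h}$ (zero at $t=T$, not at $t=0$), and symmetrically for $w_{p_h,h}$. There is no evident reason the discrete symmetry identity should hold exactly, and any discrepancy would couple $u_h$ and $p_h$ in a way you cannot absorb without further structure.

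The paper sidesteps this difficulty entirely by \emph{modifying the auxiliary problems}: it defines $w_{u_h,h}\in Y_{0,h}$ via
\[
\int_Q \nabla_x w_{u_h,h}\cdot\nabla_x v_h\,dx\,dt=\int_Q\Big(\partial_t u_h+\tfrac{1}{\varrho}p_h\Big)v_h\,dx\,dt,\quad\forall v_h\in Y_{0,h},
\]
and $w_{p_h,h}\in Y_{T,h}$ via the analogous equation with right-hand side $-(\partial_t p_h+u_h)$. With this choice and the same test pair $v_h=u_h+w_{u_h,h}$, $q_h=p_h+w_{p_h,h}$, the coupling terms in $\mathcal B$ are absorbed into the combinations $\partial_t u_h+\tfrac{1}{\varrho}p_h$ and $-(\partial_t p_h+u_h)$. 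When these are paired against $u_h$ and $p_h$ respectively, the mixed pieces $\langle p_h,u_h\rangle-\langle u_h,p_h\rangle$ cancel trivially and only the nonnegative terms $\tfrac{\varrho}{2}\|u_h(T)\|_{L^2(\Omega)}^2+\tfrac12\|p_h(0)\|_{L^2(\Omega)}^2$ remain; when paired against $w_{u_h,h}$ and $w_{p_h,h}$, the defining equations give exactly $\varrho\|w_{u_h,h}\|_Y^2$ and $\|w_{p_h,h}\|_Y^2$. From there your Cauchy--Schwarz/Young argument and denominator bound go through verbatim and produce the constant $1/(2\sqrt2)$. The point is that building the coupling into the auxiliary functions eliminates the need for any discrete symmetry identity.
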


\begin{proof}
For $(u_h,p_h) \in X_{0,h} \times X_{T,h}$, we define
$w_{u_h,h} \in Y_{0,h}$ as the unique solution of the variational problem
\begin{equation}\label{def wuh}
\int_Q \nabla_x w_{u_h,h} \cdot \nabla_x v_h \, dx \, dt =
\int_Q \Big[ \partial_t u_h + \frac{1}{\varrho} p_h \Big] 
\, v_h \, dx \, dt , \quad \forall v_h \in Y_{0,h} ,
\end{equation}
and $w_{p_h,h} \in Y_{T,h} $ satisfying
\begin{equation}\label{def wph}
\int_Q \nabla_x w_{p_h,h} \cdot \nabla_x q_h \, dx \, dt =
- \int_Q \Big[ \partial_t p_h + u_h \Big] \, q_h \, dx \, dt, 
\quad \forall q_h \in Y_{T,h}.
\end{equation}
For $v_h = u_h + w_{u_h,h} \in Y_{0,h}$ and $q_h = p_h + w_{p_h,h} \in Y_{T,h}$, 
we have, as in the proof
of Lemma \ref{Lemma inf sup L2}, see also \cite{OS15}, and
using \eqref{def wuh} and \eqref{def wph},
\begin{eqnarray*}
{\mathcal B}(u_h,p_h;v_h,q_h) & = & 
\varrho \int_Q \Big[ \partial_t u_h \, (u_h+w_{u_h,h}) +
\nabla_x u_h \cdot \nabla_x (u_h+w_{u_h,h}) \Big] dx \, dt  \\ & & \hspace*{-1cm}
+ \int_Q \Big[ - \partial_t p_h \, (p_h+w_{p_h,h}) + 
\nabla_x p_h \cdot \nabla_x (p_h+w_{p_h,h}) \Big] dx \, dt  \\ & & \hspace*{-1cm}
+ \int_Q p_h \, (u_h+w_{u_h,h}) \, dx \, dt 
- \int_Q u_h \, (p_h+w_{p_h,h}) \, dx \, dt 
\\ & & \hspace*{-2cm} \geq \, \varrho
\int_Q \Big[ \Big( \partial_t u_h + \frac{1}{\varrho} p_h \Big) w_{u_h,h} +
\nabla_x u_h \cdot \nabla_x (u_h+w_{u_h,h}) \Big] dx \, dt  \\ & & \hspace*{-1cm} 
+ \int_Q \Big[ - \Big( \partial_t p_h + u_h \Big) w_{p_h,h} + 
\nabla_x p_h \cdot \nabla_x (p_h+w_{p_h,h}) \Big] dx \, dt  
\\ & & \hspace*{-2cm} = \, 
\varrho \int_Q \Big[ 
\nabla_x w_{u_h,h} \cdot \nabla_x w_{u_h,h} +
\nabla_x u_h \cdot \nabla_x (u_h+w_{u_h,h}) \Big] dx \, dt  \\ & & 
\hspace*{-1cm} + \int_Q \Big[ 
\nabla_x w_{p_h,h} \cdot \nabla_x w_{p_h,h} + 
\nabla_x p_h \cdot \nabla_x (p_h+w_{p_h,h}) \Big] dx \, dt \\[2mm] & &
\hspace*{-2cm} = \, \varrho \, \Big(
\| \nabla_x w_{u_h,h} \|^2_{L^2(Q)} + \| \nabla_x u_h \|^2_{L^2(Q)} +
\langle \nabla_x u_h , \nabla_x w_{u_h,h} \rangle_{L^2(Q)} \Big) \\[1mm] & &
\hspace*{-1cm} 
+ \| \nabla_x w_{p_h,h} \|^2_{L^2(Q)} + \| \nabla_x p_h \|^2_{L^2(Q)} +
\langle \nabla_x p_h , \nabla_x w_{p_h,h} \rangle_{L^2(Q)} \\[2mm] & &
\hspace*{-2cm} \geq \, \frac{1}{2} \, \left[ \varrho \, \Big(
\| w_{u_h,h} \|^2_Y + \| u_h \|^2_Y 
\Big) + \| w_{p_h,h} \|^2_Y + \| p_h \|^2_Y \right] \\[2mm] & &
\hspace*{-2cm} \geq \, \frac{1}{2} \, \left[ \varrho \, \| u_h \|^2_Y 
+ \| p_h \|^2_Y \right]^{1/2}
\\ & & \hspace*{-1cm} \cdot \left[ \varrho \, \Big(
\| w_{u_h,h} \|^2_Y + \| u_h \|^2_Y 
\Big) + \| w_{p_h,h} \|^2_Y + \| p_h \|^2_Y \right]^{1/2} .
\end{eqnarray*}
With
\[
\| v_h \|_Y^2 \leq \Big( \| u_h \|_Y + \| w_{u_h,h} \|_Y \Big)^2 \leq 2 \,
\Big( \| u_h \|_Y^2 + \| w_{u_h,h} \|_Y^2 \Big)
\]
and
\[
\| q_h \|_Y^2 \leq 2 \,
\Big( \| p_h \|_Y^2 + \| w_{p_h,h} \|_Y^2 \Big) 
\]
we further have
\[
\varrho \, \| v_h \|_Y^2 + \| q_h \|_Y^2 \leq 
2 \, \left[ \varrho  \Big( \| u_h \|_Y^2 + \| w_{u_h,h} \|_Y^2 \Big)
+ \| p_h \|_Y^2 + \| w_{p_h,h} \|_Y^2 \right],
\]
which finally implies
\[
\geq \, \frac{1}{2\sqrt{2}} \, \left[ \varrho \, \| u_h \|^2_Y 
+ \| p_h \|^2_Y \right]^{1/2}
\left[ \varrho \, \| v_h \|^2_Y + \| q_h \|^2_Y \right]^{1/2} .
\]
\end{proof}

\noindent
Now, using standard arguments as for the heat equation, we can prove
a best approximation result, see Sect.~\ref{sec:prelim} and \cite{OS15}, 
\begin{eqnarray*}
&& \sqrt{\varrho \, \| u - u_h \|_Y^2 + \| p - p_h \|_Y^2}
\\ && \hspace*{1cm}
\leq (1 + 2 \sqrt{2} c_\mathcal{B}(\rho)) 
\inf\limits_{0 \neq (v_h,q_h) \in X_{0,h} \times X_{T,h}} 
\sqrt{\| u - v_h \|_{X_0}^2 + \| p - q_h \|_{X_T}^2},
\end{eqnarray*}
and therefore we can state the main result of this section. 

\begin{theorem}\label{thm:l2conv}
Assume that the solution $(u,p) \in X_0 \times X_T$ of the saddle point problem
\eqref{sec:sl2:eqn:saddle} satisfies $u,p \in H^2(Q)$.
Let $X_{0,h}=Y_{0,h} = S_h^1(Q_h) \cap X_0$ and 
$X_{T,h}=Y_{T,h} = S_h^1(Q_h) \cap X_T$ be conforming finite element
spaces. Then the discrete saddle point problem 
\eqref{sec:sl2:eqn saddle FEM} admits a unique solution
$(u_h,p_h) \in X_{0,h} \times X_{T,h}$ satisfying the error estimate
\[
\varrho \, \| u - u_h \|_Y^2 + \| p - p_h \|_Y^2 \leq c \, h^2 \,
\Big( |u|^2_{H^2(Q)} + |p|^2_{H^2(Q)} \Big) \, .
\]
\end{theorem}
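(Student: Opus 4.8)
The plan is to read off both assertions from results already established in the excerpt: unique solvability from the discrete inf--sup condition of the lemma immediately preceding the theorem, and the error bound from the C\'ea-type best approximation estimate stated just above it, combined with standard finite element interpolation error estimates.

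For well-posedness, observe that since $X_{0,h}=Y_{0,h}$ and $X_{T,h}=Y_{T,h}$ the Galerkin problem \eqref{sec:sl2:eqn saddle FEM} is a square linear system. The discrete inf--sup condition just proved states that for every nonzero $(u_h,p_h)\in X_{0,h}\times X_{T,h}$ there is a test pair $(v_h,q_h)\in Y_{0,h}\times Y_{T,h}$ with $\mathcal{B}(u_h,p_h;v_h,q_h)\neq 0$; hence the system matrix has trivial kernel and, being square, is invertible, so a unique discrete solution $(u_h,p_h)$ exists. Equivalently one invokes the discrete Ne\v{c}as--Babu\v{s}ka theorem, whose hypotheses are exactly boundedness of $\mathcal{B}$ (already recorded, with constant $c_{\mathcal B}(\varrho)$), the discrete inf--sup estimate with constant $1/(2\sqrt2)$, and surjectivity of the discrete adjoint, which again follows from the dimension count.

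For the error estimate, I would fix quasi-interpolation operators of Scott--Zhang type $I_{0,h}\colon X_0\to X_{0,h}$ and $I_{T,h}\colon X_T\to X_{T,h}$ that reproduce the homogeneous conditions on $\Sigma,\Sigma_0$ and on $\Sigma,\Sigma_T$, respectively, so that the interpolants lie in the discrete spaces. For $u\in H^2(Q)$ the standard local estimates give $\|\nabla_x(u-I_{0,h}u)\|_{L^2(Q)}+\|\partial_t(u-I_{0,h}u)\|_{L^2(Q)}\le c\,h\,|u|_{H^2(Q)}$. The first summand already controls the $Y$-part of $\|u-I_{0,h}u\|_{X_0}$, while the remaining contribution $\|\partial_t(u-I_{0,h}u)\|_{Y^*}=\|\partial_t(u-I_{0,h}u)\|_{L^2(0,T;H^{-1}(\Omega))}$ is bounded, via the continuous embedding $L^2(\Omega)\hookrightarrow H^{-1}(\Omega)$, by a constant times $\|\partial_t(u-I_{0,h}u)\|_{L^2(Q)}$; hence $\|u-I_{0,h}u\|_{X_0}\le c\,h\,|u|_{H^2(Q)}$ and, in the same way, $\|p-I_{T,h}p\|_{X_T}\le c\,h\,|p|_{H^2(Q)}$. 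Inserting $v_h=I_{0,h}u$ and $q_h=I_{T,h}p$ into the best approximation estimate preceding the theorem gives
\[
\sqrt{\varrho\,\|u-u_h\|_Y^2+\|p-p_h\|_Y^2}\le\bigl(1+2\sqrt2\,c_{\mathcal B}(\varrho)\bigr)\sqrt{\|u-I_{0,h}u\|_{X_0}^2+\|p-I_{T,h}p\|_{X_T}^2};
\]
using the interpolation bounds above, the right-hand side is at most $c\,h\,(|u|_{H^2(Q)}^2+|p|_{H^2(Q)}^2)^{1/2}$, and squaring yields the claimed estimate, with $c$ depending on $\varrho$ only through $c_{\mathcal B}(\varrho)$.

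I expect the only non-routine point to be the negative-order term $\|\partial_t\cdot\|_{L^2(0,T;H^{-1}(\Omega))}$ present in the trial norms $\|\cdot\|_{X_0}$ and $\|\cdot\|_{X_T}$: one must check that it too converges at rate $O(h)$. Because it is simply dominated by the $L^2(Q)$-norm of the time derivative of the interpolation error through the Poincar\'e-type embedding $L^2(\Omega)\hookrightarrow H^{-1}(\Omega)$, nothing is gained but, more to the point, nothing is lost, and the $O(h)$ behaviour of the $Y$-term already dictates the final order. A secondary care is that the interpolant must respect the homogeneous initial/terminal and lateral boundary conditions, which is why a Scott--Zhang-type operator is used rather than the nodal interpolant; the latter need not even be well defined on $H^2(Q)$ when $d=3$, since then the space-time domain is four-dimensional.
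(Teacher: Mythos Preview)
Your proposal is correct and follows essentially the same route as the paper: unique solvability via the discrete inf--sup condition of the preceding lemma (together with the square-system/dimension argument), and the error bound via the C\'ea-type best approximation estimate combined with standard $O(h)$ interpolation error estimates for $H^2(Q)$ functions in the $X_0$- and $X_T$-norms. The paper itself gives no further detail than this, simply recording the best approximation inequality and then stating the theorem; your remarks on using a Scott--Zhang interpolant (necessary when $d=3$, since $Q\subset\mathbb{R}^4$ and $H^2(Q)\not\hookrightarrow C(\overline Q)$) and on bounding $\|\partial_t(\,\cdot\,)\|_{L^2(0,T;H^{-1}(\Omega))}$ by the $L^2(Q)$-norm via $L^2(\Omega)\hookrightarrow H^{-1}(\Omega)$ are correct elaborations that the paper leaves implicit.
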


\subsection{Numerical experiments}
In all the numerical examples considered in this work, we set 
$\Omega=(0, 1)^2$, $T=1$, and therefore $Q=(0,1)^3$. The initial 
(coarsest) space-time finite element mesh contains $125$ vertices 
($5$ vertices in each direction), $384$ tetrahedral elements, and
thus the initial mesh size is $h=1/4$. By uniform refinement 
(red-green refinement \cite{JB95}), the mesh size will be reduced 
successively, i.e., $h=1/8$, $1/16$ and so on. The numerical 
experiments are performed on a desktop with Intel@ Xeon@ Prozessor  
E5-1650 v4 ($15$ MB Cache, $3.60$ GHz), and $64$ GB memory. To solve the
discrete linear coupled first-order necessary optimality system, we use the
algebraic multigrid preconditioned GMRES method. The relative residual error
$\epsilon=10^{-7}$
is taken as a stopping criterion for the GMRES iteration. In
constructing the algebraic multigrid preconditioner for the coupled system, we
utilize a simple blockwise coarsening strategy and a blockwise ILU smoother on
each level; see the performance study for solving such coupled systems in
\cite{OSHY19}. 

\subsubsection{An example with explicitly known solution (Example 1)}
In the first example, we consider the following explicitly known solution 
of the first order optimality system
\begin{equation*}
\begin{aligned}
u(x,t)&=\sin(\pi x_1)\sin(\pi x_2)\left(at^2+bt\right),
\quad a=-\frac{2\pi^2+1}{2\pi^2+2}, \quad b=1, \\
p(x,t)&=-\varrho\sin(\pi x_1)\sin(\pi x_2)
\left( 2\pi^2 a t^2+(2\pi^2b + 2a)t+b\right),\\[1mm]
z(x,t)&=\sin(\pi x_1)\sin(\pi x_2)
\left(2 \pi^2 a t^2+(2\pi^2b + 2a)t+b \right),
\end{aligned}
\end{equation*}
and we set $\varrho=0.01$. The exact solution $u$ satisfies homogeneous 
initial and boundary conditions for the state equation, and $p$ obeys 
the homogeneous terminal and boundary conditions for the adjoint equation; 
see Fig.~\ref{fig:l2_p11} for an illustration. 
\begin{figure}[h]
\centering
\includegraphics[width=0.32\textwidth]{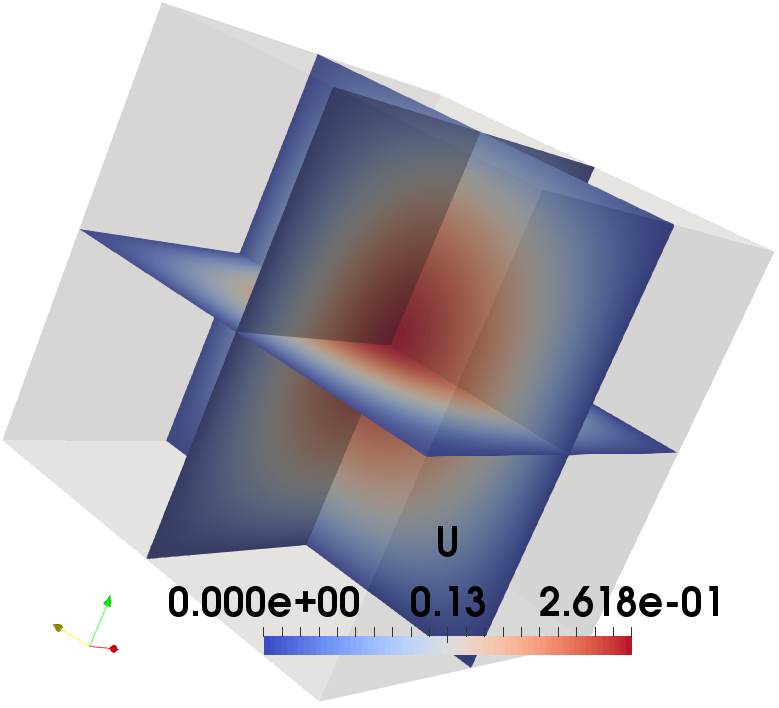}
\includegraphics[width=0.32\textwidth]{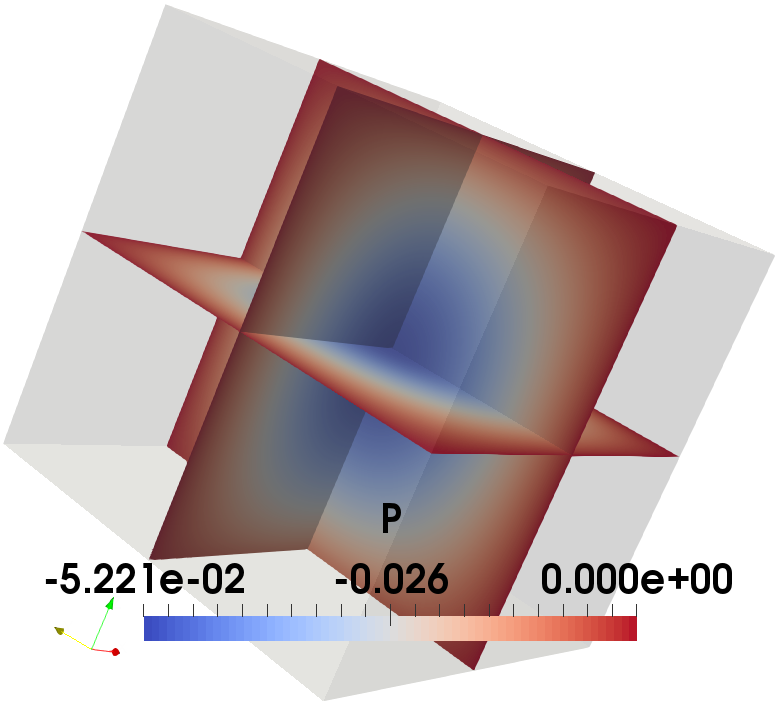}
\includegraphics[width=0.32\textwidth]{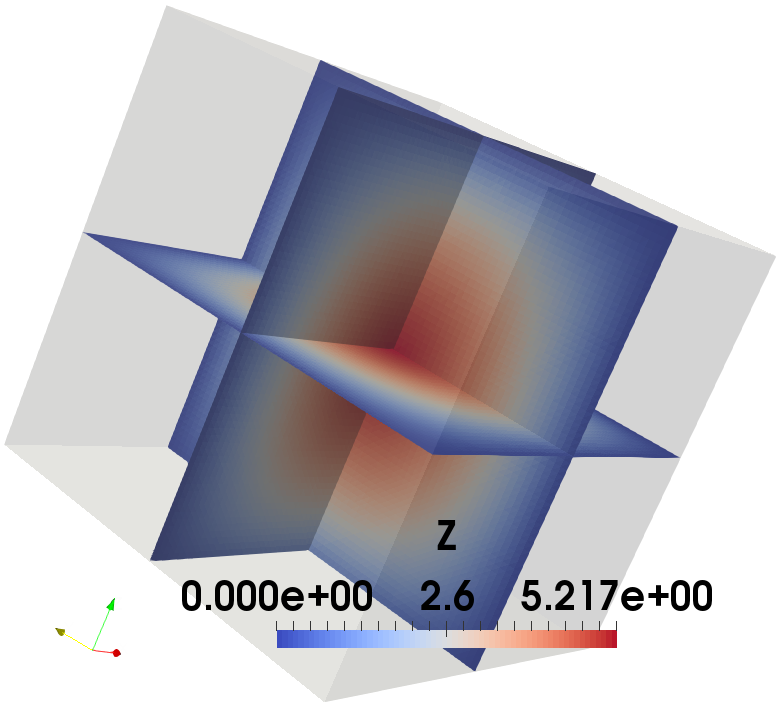}
\caption{Example 1, numerical solutions of $u$, $p$, and $z$ for the 
linear model problem (from left to right).}
\label{fig:l2_p11}
\end{figure} 
The estimated order of convergence (eoc) is provided in Tables
\ref{tab:l2_eocl2h1_p11}-\ref{tab:l2_eocobj_p11}. From these results, we
clearly see optimal convergence in $Y=L^2(0,T; H^1_0(\Omega))$ as predicted 
by Theorem \ref{thm:l2conv}. In addition, we observe a nearly optimal 
convergence rate in $L^2(Q)$. Finally, we see the second-order convergence 
rate of the objective functional.
\begin{table}[h]\caption{Example 1, estimated order of convergence (eoc) 
for $u_h$ and $p_h$ in $Y$ for the linear model problem.} 
\centering
\begin{tabular}{rrrrrr}
\toprule
\#Dofs & $h$ & $\|u-u_h\|_Y$ &  eoc & $\|p-p_h\|_Y$ &  eoc  \\
\midrule
$250$ & $2^{-2}$& $2.218e-1$    & $-$  & $4.201e-2$    & $-$       \\
$1,458$ & $2^{-3}$& $1.141e-1$    & $0.959$  &$2.235e-2$    & $0.910$  \\
$9,826$ & $2^{-4}$ & $5.677e-2$    & $1.007$ & $1.123e-2$    & $0.993$  \\
$71,874$ & $2^{-5}$ & $2.816e-2$    & $1.012$ & $5.588e-3$    & $1.007$   \\
$549,250$ & $2^{-6}$ &$1.400e-2$    & $1.008$ & $2.781e-3$    & $1.006$ \\
$2,146,689$ & $2^{-7}$ &$6.983e-3$    & $1.004$ & $1.387e-3$    & $1.003$  \\
\bottomrule
\end{tabular}\label{tab:l2_eocl2h1_p11}
\end{table}
\begin{table}[h]\caption{Example 1, estimated order of convergence (eoc) 
for $u_h$ and $p_h$ in $L^2(Q)$ for the linear model problem.} 
\centering
\begin{tabular}{rrrrrr}
\toprule
\#Dofs & $h$ & $\|u-u_h\|_{L^2(Q)}$ & eoc & $\|p-p_h\|_{L^2(Q)}$ & eoc \\
\midrule
$250$ & $2^{-2}$& $3.767e-2$    & $-$  & $4.146e-3$    & $-$   \\
$1,458$ & $2^{-3}$& $1.156e-2$    & $1.704$  &$1.160e-3$    & $1.837$  \\
$9,826$ & $2^{-4}$ & $3.009e-3$    & $1.942$ & $2.981e-4$   & $1.961$  \\
$71,874$ & $2^{-5}$ & $7.595e-4$    & $1.986$ & $7.515e-5$    & $1.988$ \\
$549,250$ &  $2^{-6}$ &$1.927e-4$    & $1.979$ & $1.950e-5$    & $1.947$ \\
$2,146,689$ & $2^{-7}$ &$4.948e-5$ & $1.961$ & $5.244e-6$ & $1.894$ \\
\bottomrule
\end{tabular}\label{tab:l2_eocl2l2_p11}
\end{table}
\begin{table}[h]\caption{Example 1, $J(u_h,z_h)$ and $|J(u_h,z_h)-J(u,z)|$
for the linear model problem, where 
$J(u,z)=9.53329e-2$.} 
\centering
\begin{tabular}{rrrrr}
\toprule
\#Dofs &  $h$ & $J(u_h, z_h)$ &  $|J(u_h, z_h)-J(u,z)|$ & eoc \\
\midrule
$250$ & $2^{-2}$& $1.04613e-1$      & $9.2801e-3$    & $-$   \\
$1,458$ & $2^{-3}$& $9.80559e-2$     &$2.7230e-3$    & $1.769$  \\
$9,826$ & $2^{-4}$ & $9.60214e-2$    & $6.8850e-4$    & $1.984$  \\
$71,874$ & $2^{-5}$ & $9.55024e-2$   & $1.6950e-4$    & $2.022$ \\
$549,250$ &  $2^{-6}$ &$9.53748e-2$   & $4.1900e-5$    & $2.016$ \\
$2,146,689$ &  $2^{-7}$ &$9.53433e-2$   & $1.0400e-5$    & $2.010$ \\
\bottomrule
\end{tabular}\label{tab:l2_eocobj_p11}
\end{table}

\subsubsection{An example with discontinuous target (Example 2)}
In the second example, the space-time domain $Q=(0,1)^3$ and the 
discontinuous target function 
\begin{equation*}
  u_d(x,t) =
  \begin{cases}
    &1\quad\text{ if }
    \sqrt{(x_1-\frac{1}{2})^2+(x_2-\frac{1}{2})^2+(t-\frac{1}{2})^2}
    \leq\frac{1}{4},\\
    &0\quad\textup{ else }
  \end{cases}
\end{equation*}
are considered. Further, we set homogeneous initial and boundary conditions for
the state equation, and homogeneous terminal and boundary conditions for the
adjoint equation. For the $L^2$-regularization parameter, we select
$\varrho=10^{-6}$.
Following the approach in \cite{OSHY18}, we have utilized a residual based
error indicator to drive our 
mesh refinements in order to resolve the discrete optimality system. The
space-time finite element solutions for the state and 
adjoint variables, as well as the time-dependent target are displayed in
Fig.~\ref{fig:ex2sol}. The control is reconstructed by a postprocessing step 
using piecewise constant ansatz functions, which is demonstrated in the last
column in Fig.~\ref{fig:ex2sol}. For a discussion and an error
  analysis of this postprocessing idea, we refer to \cite{CMAR04} in the case
  of elliptic PDE control. The adaptive mesh is illustrated in
  Fig.~\ref{fig:ex2mesh} at the $20$th refining step, which contains
  $2,080,493$ grid points, i.e., the total number of degrees of freedom for the coupled
state and adjoint equation is $4,160,986$.   

\begin{figure}[h]
  \centering
  \includegraphics[width=0.24\textwidth]{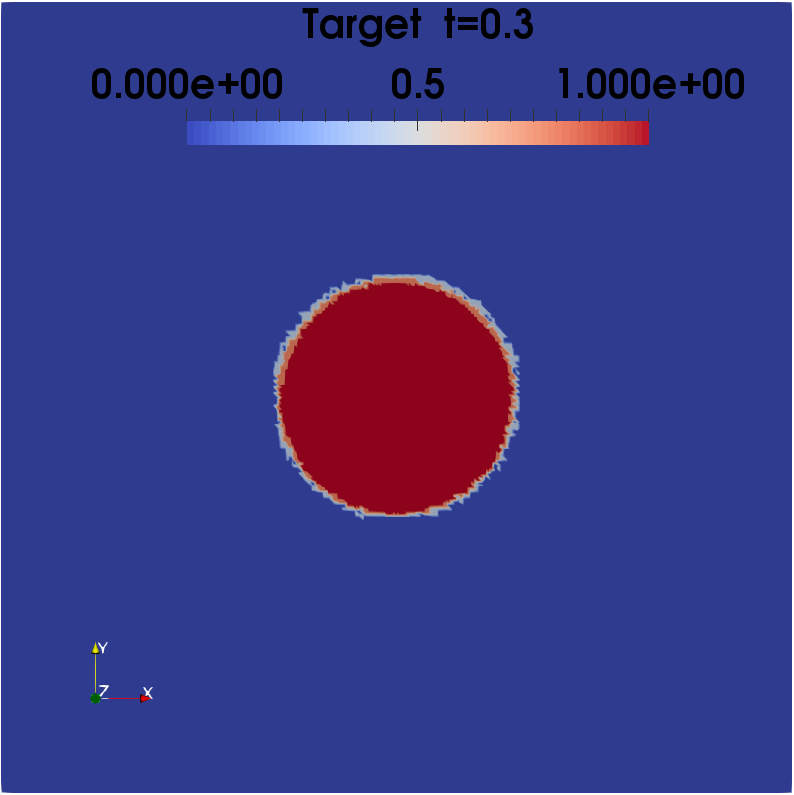}
  \includegraphics[width=0.24\textwidth]{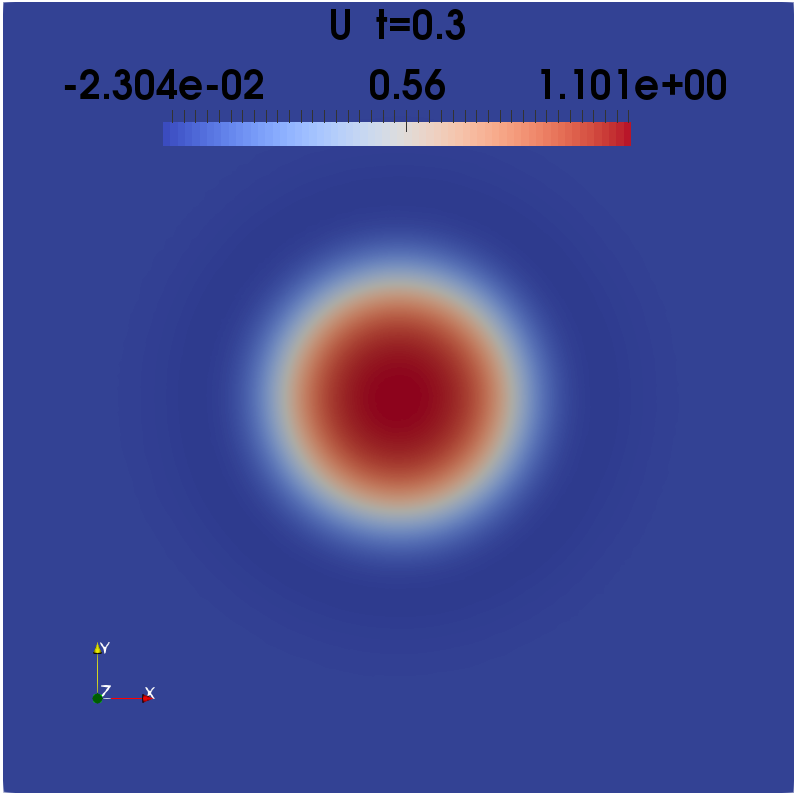}
  \includegraphics[width=0.24\textwidth]{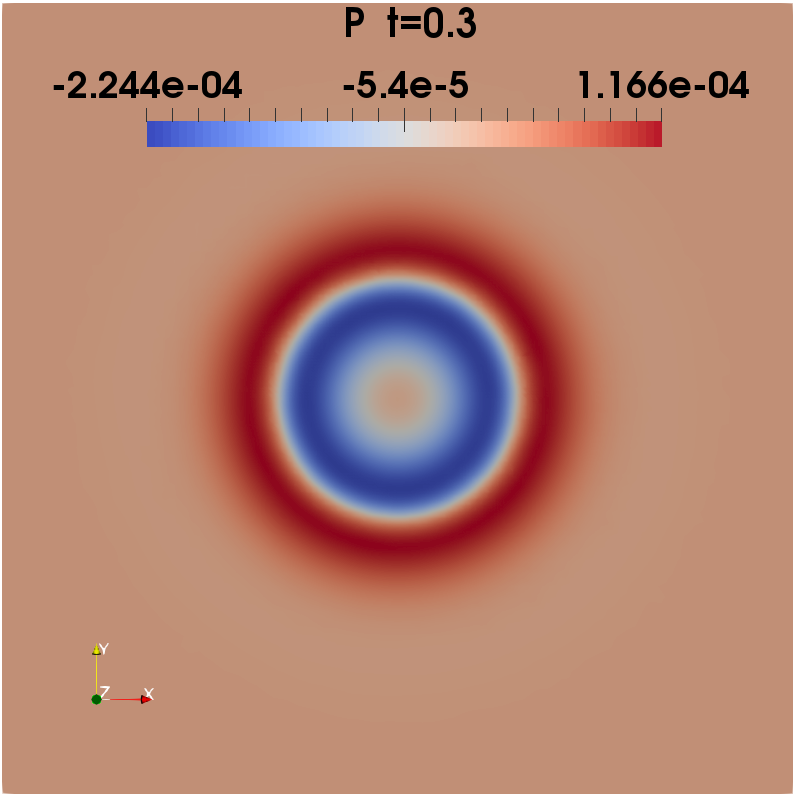}
  \includegraphics[width=0.24\textwidth]{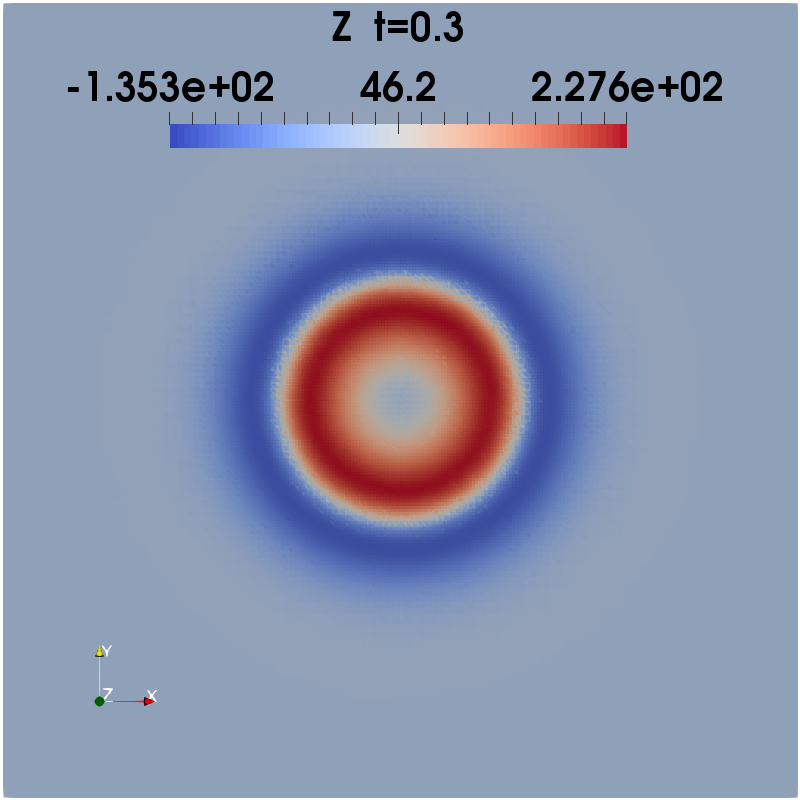}
  \includegraphics[width=0.24\textwidth]{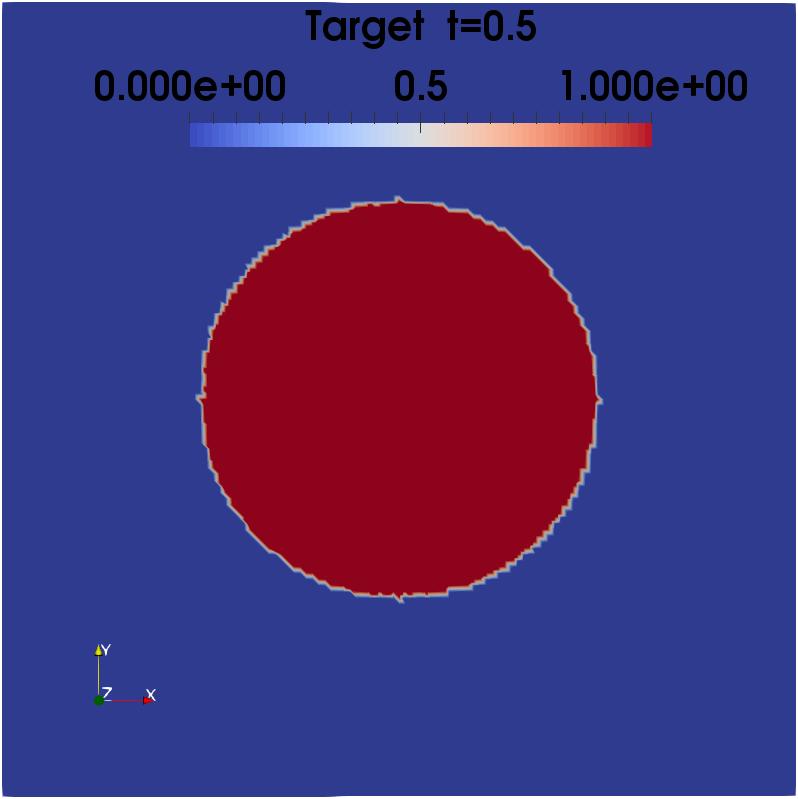}
  \includegraphics[width=0.24\textwidth]{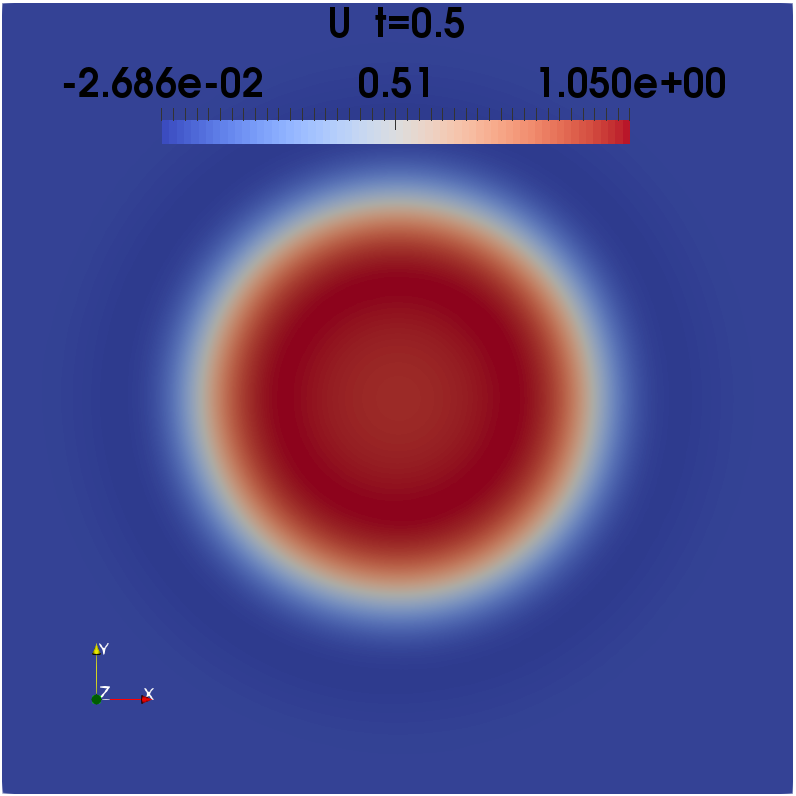}
  \includegraphics[width=0.24\textwidth]{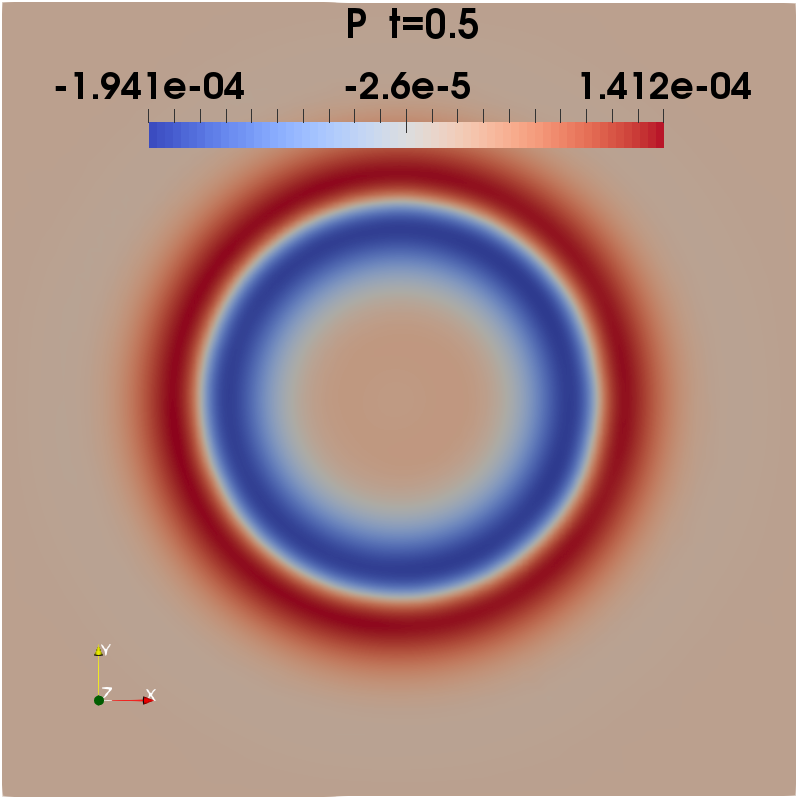}
  \includegraphics[width=0.24\textwidth]{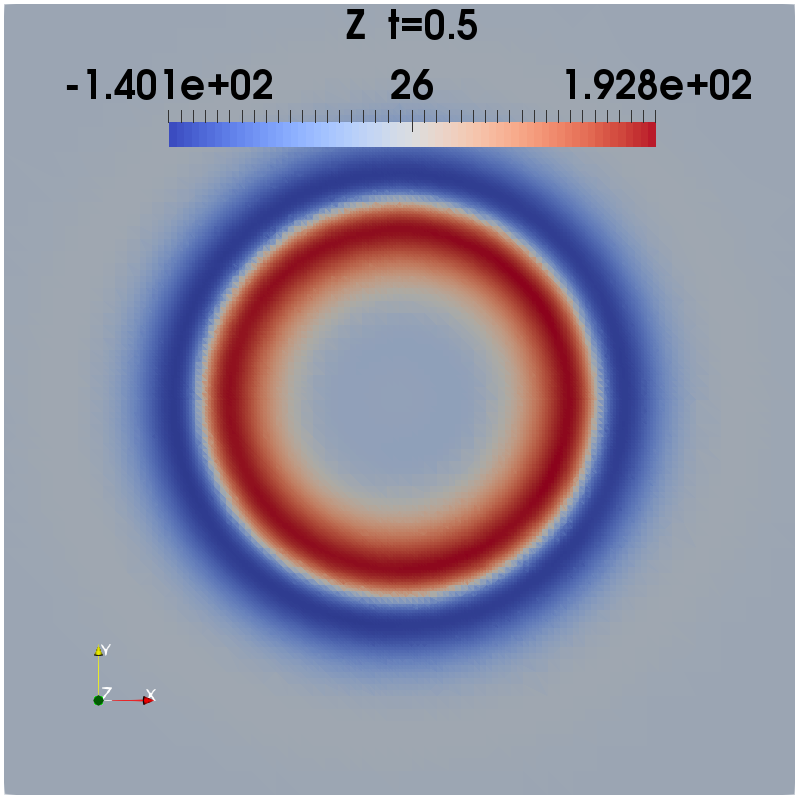}
  \includegraphics[width=0.24\textwidth]{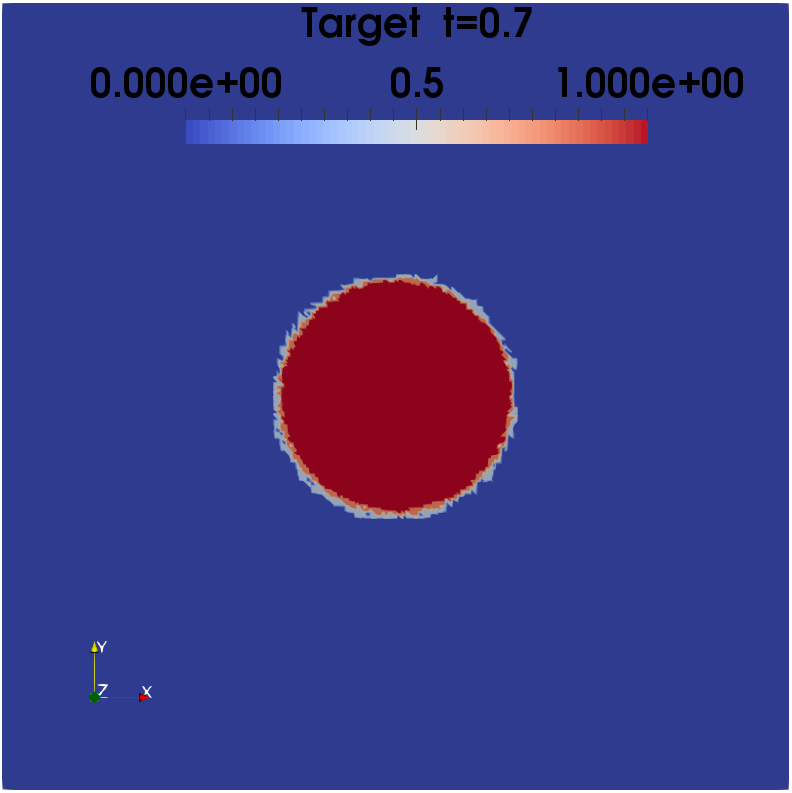}
  \includegraphics[width=0.24\textwidth]{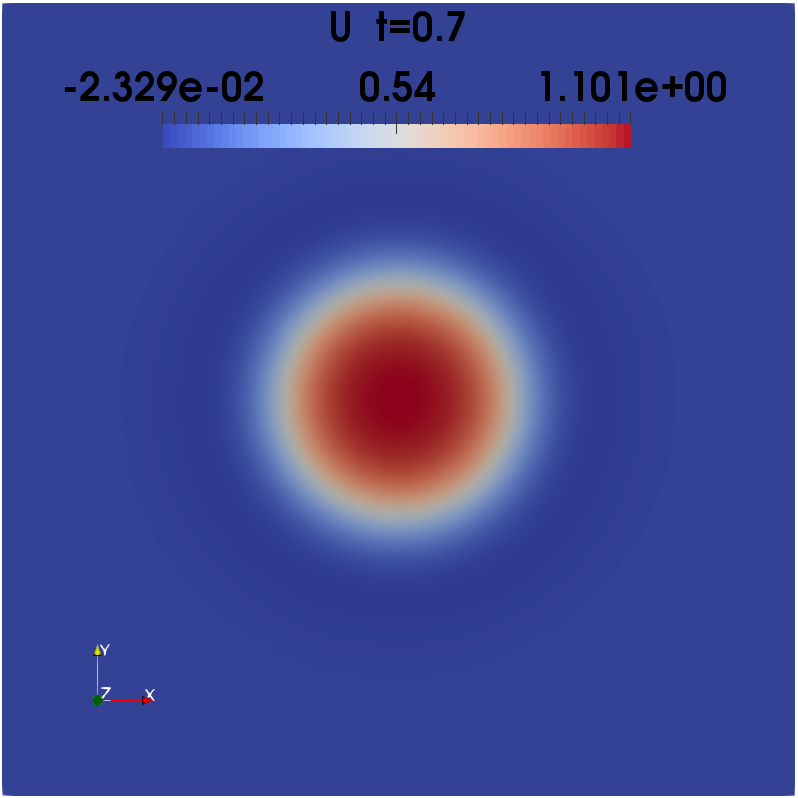}
  \includegraphics[width=0.24\textwidth]{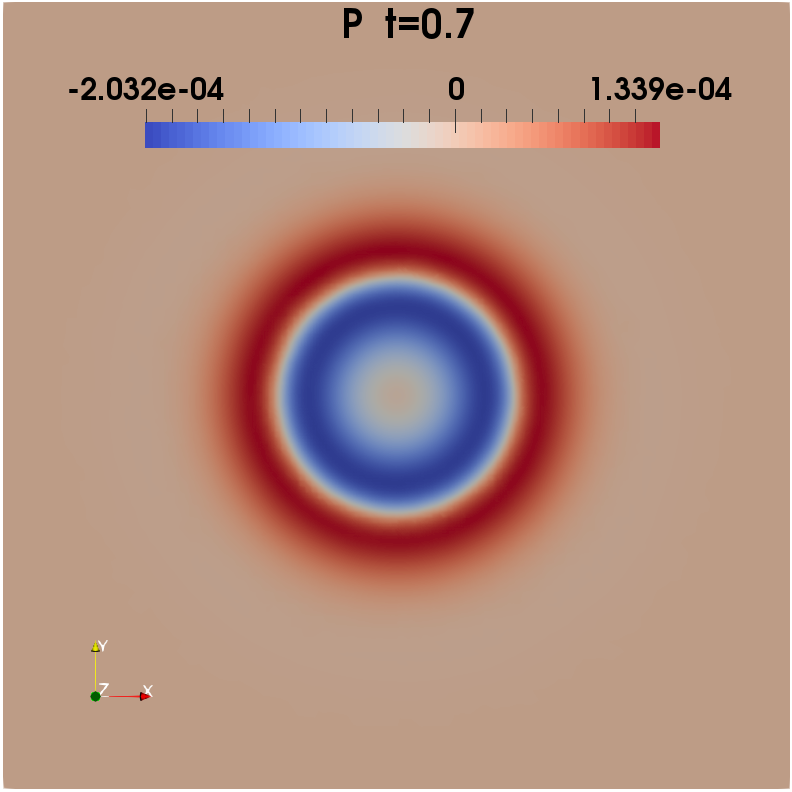}
  \includegraphics[width=0.24\textwidth]{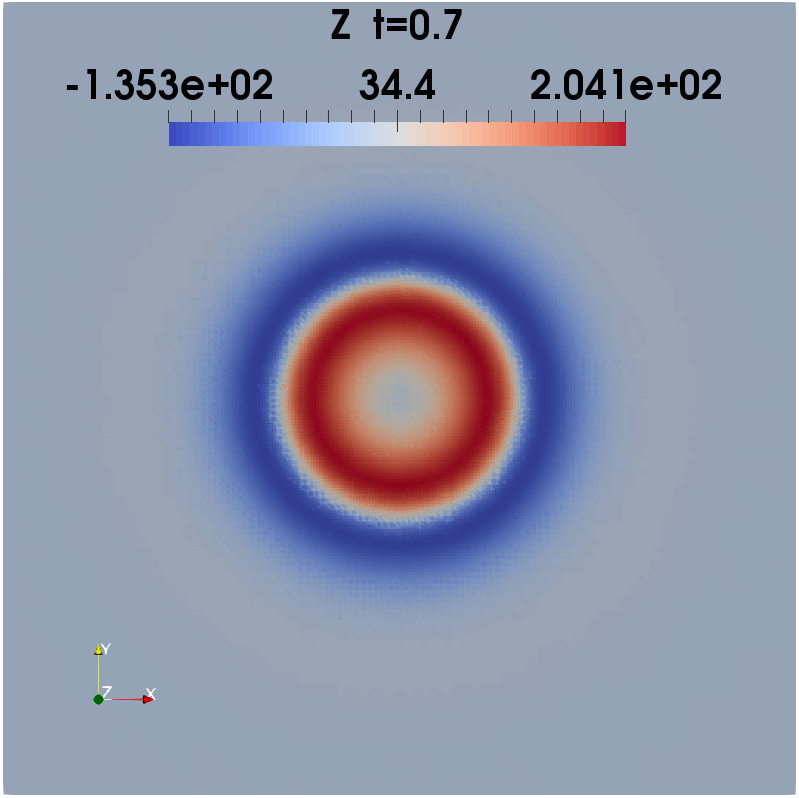}
  \caption{Example 2, numerical solutions of $u$, $p$, and $z$ for the linear model problem
    with a discontinuous target, at $t=0.3$,
    $0.5$, and $0.7$ (from top to bottom).}
  \label{fig:ex2sol}
\end{figure} 
\begin{figure}[h]
  \centering
  \includegraphics[width=0.24\textwidth]{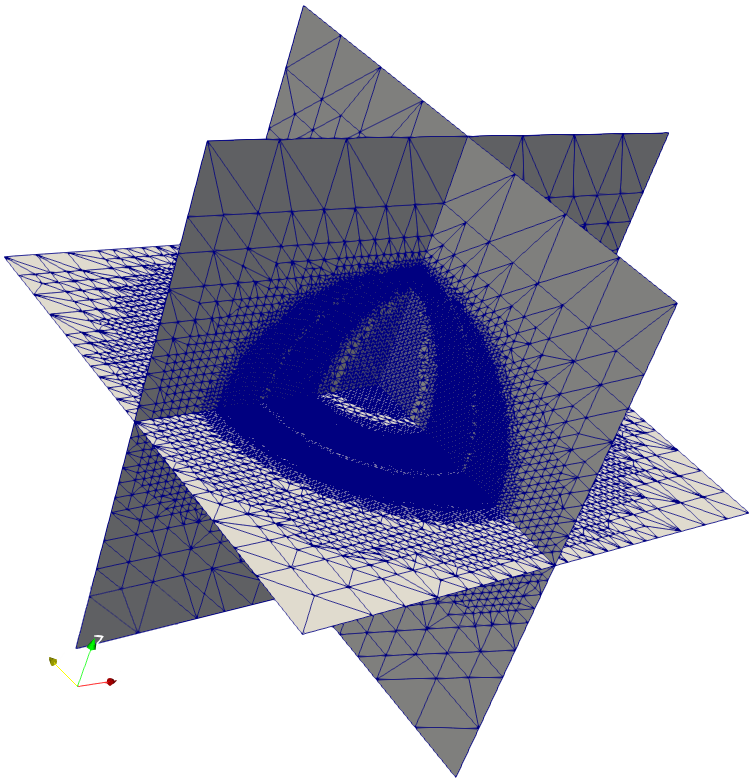}
  \includegraphics[width=0.24\textwidth]{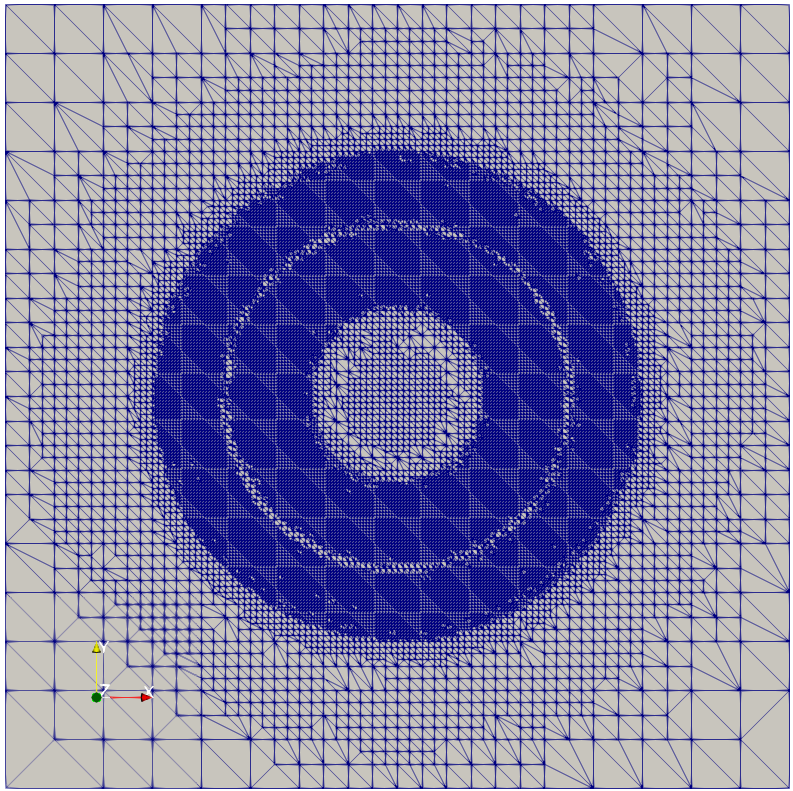}
  \includegraphics[width=0.24\textwidth]{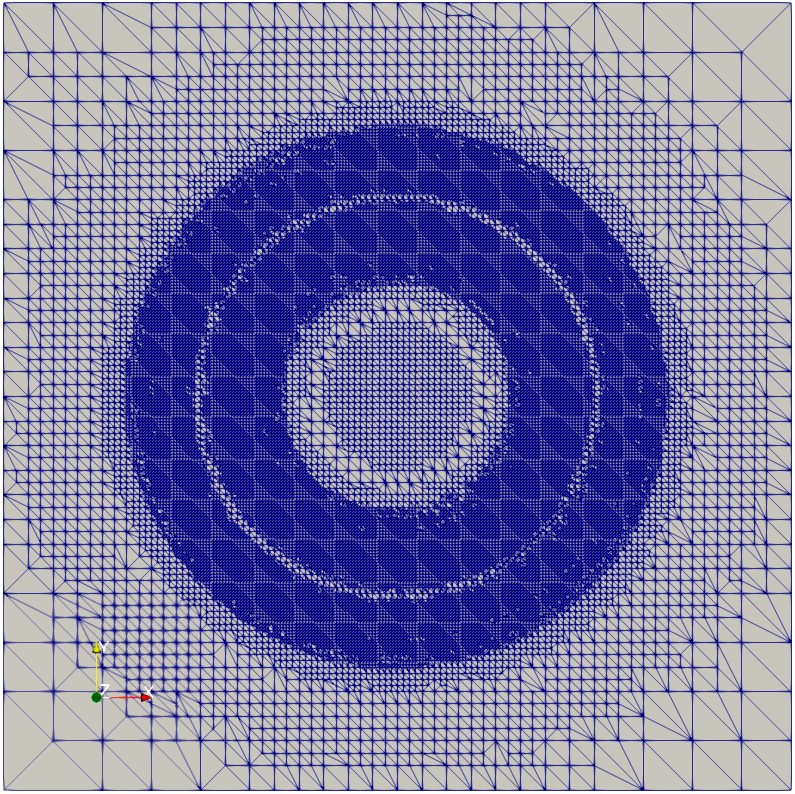}
  \includegraphics[width=0.24\textwidth]{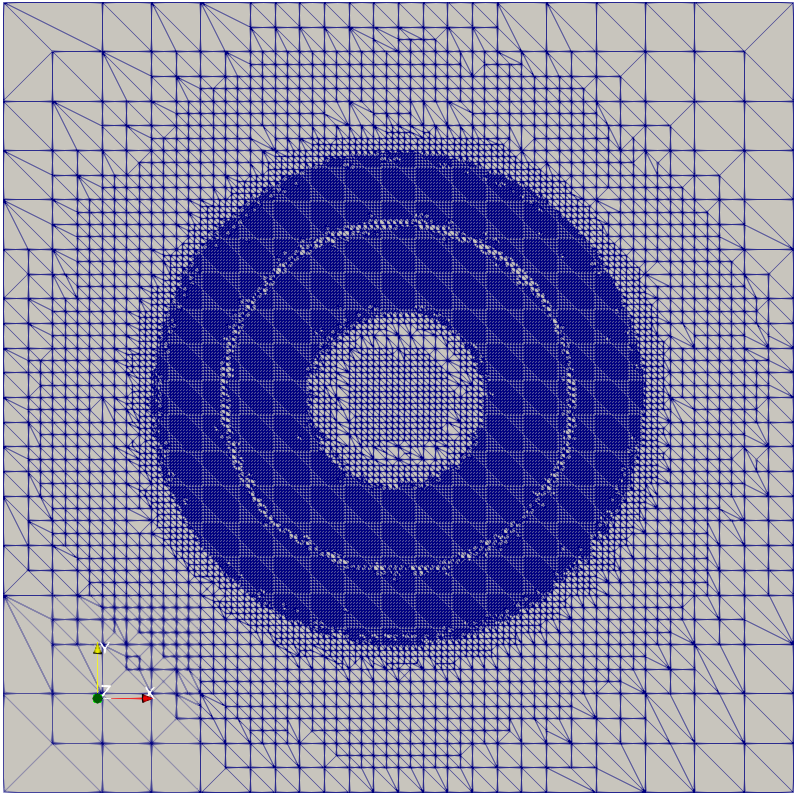}
  \caption{Example 2, adaptive mesh refinement at the $20$th step in space-time at
    $t=0.375$, $0.5$, and $0.625$ (from left to right), with a discontinuous target.}  
  \label{fig:ex2mesh}
\end{figure} 

\section{Control of a semilinear parabolic equation}
\label{sec:semilinearparabolicequations}

\subsection{The control problem}
Here, we consider the following optimal control problem for a semilinear 
heat equation to minimize
\begin{equation}\label{obj_nonlin}
J(u,z) := \frac{1}{2} \int_Q(u - u_d)^2 \, dx \, dt +  
\frac{\varrho}{2} \int_Q z^2 \, dx \, dt
\end{equation}
subject to 
\begin{equation} \label{eq_semilin}
\partial_tu - \Delta_x u + R(u) = z \mbox{ in } Q, \quad
\partial_n u = 0 \mbox{ in } \Sigma, \quad
u = 0 \mbox{ on } \Sigma_0 .
\end{equation}
The control $z$ is taken from the space $L^p(Q)$ with $p > 1+d/2$ to guarantee
existence and uniqueness of a bounded solution $u$ to
\eqref{eq_semilin}. We look for
the solution $u$ of \eqref{eq_semilin} in the space $Y \cap L^\infty(Q)$. 
Moreover, we here  assume $u_d \in L^p(Q)$ to ensure later that the adjoint 
state belongs to $L^\infty(Q)$.

The function $R: \mathbb{R} \to \mathbb{R}$ is a $C^2$-function with locally 
Lipschitz second-order derivative, i.e., for all $M > 0$, there is some 
$L(M) > 0$ such that
\[
|R''(v_1) - R''(v_2)| \le L(M) \, |v_1-v_2| \quad \forall v_i 
\mbox{ with  } |v_i|\le M, i = 1,2.
\]
Moreover, we require the existence of some $c_0 \in \mathbb{R}$ 
(possibly negative) such that
\[
R'(u) \ge c_0 \quad \forall u \in \mathbb{R}.
\]
An important particular case for the reaction term $R$ is 
\[
R(u) = (u - u_1)(u - u_2)(u - u_3),
\]
where real numbers $u_1\le u_2\le u_3$ are given. Obviously, this function 
obeys the assumptions above. It is used for the Schl\"ogl and 
FitzHugh-Nagumo equations. The following theorem on the solvability 
of \eqref{eq_semilin} is known:

\begin{theorem}[\cite{ECCRFT13}]
Let $\Omega \subset {\mathbb{R}}^d$, $d \le 3$,  be a bounded Lipschitz 
domain and let $R$ satisfy the conditions stated above. Then for all 
controls $z \in L^p(Q)$ with $p >1+d/2$, the equation \eqref{eq_semilin} 
has a unique solution $u \in Y \cap L^\infty(Q)$. The control-to-state 
mapping $G: z \mapsto u$ is of class $C^2$.
\end{theorem}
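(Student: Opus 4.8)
The plan is to proceed in three stages: first establish a priori bounds, in particular an $L^\infty(Q)$ bound on any weak solution; then obtain existence and uniqueness by truncating the nonlinearity $R$; and finally deduce the $C^2$-regularity of $G$ from the implicit function theorem. I expect the $L^\infty(Q)$ bound to be the main obstacle, while the remaining steps are comparatively routine.

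\emph{A priori bounds.} Testing the weak form of \eqref{eq_semilin} with $u$ and using $R'\ge c_0$ (which yields $R(s)\,s\ge c_0 s^2-c_1$ with $c_1$ depending only on $R(0)$), together with Gronwall's inequality, gives a bound for $u$ in $L^\infty(0,T;L^2(\Omega))\cap Y$ in terms of $\|z\|_{L^2(Q)}$, hence of $\|z\|_{L^p(Q)}$. To upgrade this to an $L^\infty(Q)$ bound, rewrite the equation as $\partial_t u-\Delta_x u+c_0 u=z-g(u)$ with $g(s):=R(s)-c_0 s$ monotone nondecreasing, and run a Stampacchia (De Giorgi) truncation over the super- and sublevel sets $\{u>k\}$, $\{u<-k\}$: for $k$ large the term $g(u)$ has a favourable sign on these sets and can be dropped, and since $p>1+d/2$ is exactly the Aronson--Serrin threshold for $L^\infty$-regularity of parabolic equations with an $L^p$ right-hand side, one obtains $\|u\|_{L^\infty(Q)}\le C(\|z\|_{L^p(Q)})$. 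This uniform estimate is the technical heart of the theorem.

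\emph{Existence and uniqueness.} Let $M$ denote the $L^\infty$ bound just obtained, and replace $R$ by a globally Lipschitz modification $R_M$ agreeing with $R$ on $[-M,M]$ and still satisfying $R_M'\ge c_0$. The modified problem $\partial_t u-\Delta_x u+R_M(u)=z$ is solvable either by a Faedo--Galerkin scheme combined with Browder--Minty theory for the operator $v\mapsto -\Delta_x v+R_M(v)-c_0 v$ (monotone after the shift by $c_0$), or by Schauder's fixed point theorem applied to the map sending $\bar u$ to the solution of the linear parabolic equation with right-hand side $z-R_M(\bar u)$. The a priori estimate forces $\|u\|_{L^\infty(Q)}\le M$, so $R_M(u)=R(u)$ and $u$ solves \eqref{eq_semilin}, with $u\in Y\cap L^\infty(Q)$ and (by parabolic regularity) well-defined initial trace. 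Uniqueness follows by subtracting two bounded solutions, testing with the difference, and using $\big(R(u_1)-R(u_2)\big)(u_1-u_2)\ge c_0(u_1-u_2)^2$ together with Gronwall.

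\emph{Differentiability of $G$.} Apply the implicit function theorem to
\[
F(z,u):=\partial_t u-\Delta_x u+R(u)-z,
\]
viewed as a map from $L^p(Q)\times\big(W(0,T)\cap L^\infty(Q)\big)$ into $L^p(Q)$, with the homogeneous initial and boundary conditions built into the state space (so that, by $L^p$ maximal parabolic regularity, $F_u$ lands in the right space). The one delicate point is that the superposition operator $u\mapsto R(u)$ is of class $C^2$ from $L^\infty(Q)$ into $L^p(Q)$, with derivatives $h\mapsto R'(u)h$ and $(h_1,h_2)\mapsto R''(u)h_1h_2$; this is precisely where the hypotheses $R\in C^2$ and $R''$ locally Lipschitz are used, since $C^k$-differentiability of Nemytskii operators on $L^\infty$ requires continuity, resp. local Lipschitz continuity, of the relevant derivative of the symbol, and the $L^\infty$-bound on $u$ confines everything to a fixed bounded range. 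The partial derivative $F_u(z,u)\colon v\mapsto\partial_t v-\Delta_x v+R'(u)v$ is a linear isomorphism onto $L^p(Q)$: well-posedness in $W(0,T)$ follows from coercivity after the shift by $c_0$ (note $R'(u)\ge c_0$), and the $L^p$ mapping property from the Stampacchia estimate of the first stage applied to the linear equation with bounded potential $R'(u)\in L^\infty(Q)$. The implicit function theorem then yields a $C^2$ solution map $z\mapsto u=:G(z)$, which by uniqueness is the control-to-state operator.
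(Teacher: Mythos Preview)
The paper does not prove this theorem at all; it is stated with a citation to \cite{ECCRFT13} and no argument is given. So there is no ``paper's own proof'' to compare against, and your sketch stands on its own.

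Your outline is broadly correct and follows the standard route taken in the cited reference: an $L^\infty(Q)$ a~priori bound via Stampacchia truncation (the condition $p>1+d/2$ is indeed the parabolic threshold), existence by truncating $R$ to a globally Lipschitz modification and then removing the truncation via the a~priori bound, uniqueness from the one-sided bound $R'\ge c_0$, and $C^2$-smoothness of $G$ from the $C^2$-regularity of the Nemytskii operator $u\mapsto R(u)$ on $L^\infty$. One point to tighten: in your implicit function theorem setup you take the state space to be $W(0,T)\cap L^\infty(Q)$ and the target space $L^p(Q)$, but for $u\in W(0,T)$ the expression $\partial_t u-\Delta_x u$ only lies in $L^2(0,T;H^{-1}(\Omega))$, not in $L^p(Q)$, so $F$ is not well defined between those spaces as written. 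You either need to take the state space to be the maximal $L^p$-regularity space for the heat operator (so that $\partial_t u-\Delta_x u\in L^p(Q)$ by construction), or---as is actually done in \cite{ECCRFT13} and related papers of Casas and Tr\"oltzsch---bypass the implicit function theorem and prove Fr\'echet differentiability directly by writing down the linearized equation, solving it, and estimating the remainder. Either fix is routine, but the sketch as it stands has a mismatch of spaces at that step.
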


\noindent
The optimal control problem \eqref{obj_nonlin}-\eqref{eq_semilin} can be 
expressed in the reduced form
\[
\min_{z \in L^p(Q)} f(z) :=\frac{1}{2}\int_Q(G(z) - u_d)^2 \, dx \, dt +  
\frac{\varrho}{2} \int_Q z^2 \, dx \, dt.
\]
Compared with the quadratic optimal control problems of the former 
sections, several new difficulties occur. 

Though we have $f(u) \to \infty$ if $\|z\|_{L^2(Q)} \to \infty$, the existence 
of an optimal control $\bar z$ cannot be proved by standard weak compactness 
techniques. It was recently shown by fairly deep arguments that at least one 
(globally) optimal control exists for the unconstrained case, 
cf. \cite{CCMMAR2016,ECCRFT2018}. This justifies to consider the optimal 
control problem without control constraints. 

Moreover, even though the functional $J$ is convex, the reduced functional 
$f$ is not in general convex, because $G$ is nonlinear. Therefore, the 
optimality system is not sufficient for (local or global) optimality of 
its solution. We might also have different global or local solutions of the 
optimal control problem. They are even not guaranteed to be locally unique. 
To overcome these difficulties, we assume that a given reference solution 
of the optimality system satisfies a second-order sufficient optimality 
condition.

Finally, we should mention that the mapping $G$ is not in general 
differentiable or twice differentiable in the Hilbert space $L^2(Q)$. 

To allow for a Hilbert space setting as in the previous sections, 
we will proceed as follows: In 
the infinite-dimensional setting, we will apply a Newton method 
(sequential quadratic programming (SQP) method) that solves
the problem by a sequence of quadratic optimal control problems 
that are posed in Hilbert space. 
These are solved by the methods of the former sections. For convergence 
of this method, the second-order sufficient optimality condition is 
needed again. 

\subsection{The Lagrange-Newton-SQP method}
Let us define the Lagrangian $\mathcal{L}:(Y\cap L^\infty(Q))^2 \times
L^p(Q) \to \mathbb{R}$, $p > 1+d/2$, by
\[
\mathcal{L}(u,p,z) = J(u,z) - \int_Q \Big(
\partial_t u \, p + \nabla u \cdot \nabla p + R(u) \, p - z \, p 
\Big) \, dx \,dt.
\]
Then the SQP method proceeds as follows: An arbitrary triplet 
$(u_0,p_0,z_0) \in (Y\cap L^\infty(Q))^2 \times L^p(Q)$ is taken as 
initial iterate. For a given iterate $(u_n,p_n,z_n)$
the following quadratic optimal control problem ($QP_n$) is considered:
\[
\tag{$QP_n$}
\left. \hspace*{-1mm} 
\begin{array}{l}
\min J'(u_n,z_n)(u - u_n,z - z_n) + \displaystyle \frac{1}{2} \mathcal{L}_{u,z}''(u_n,p_n,z_n)(u-u_n,z-z_n)^2\\[2ex]
\qquad \mbox{subject to the linearized equation}\\[2ex]
\begin{array}{rcll}
\partial_tu - \Delta_x u + R(u_n) + R'(u_n)(u - u_n) & = & z \quad
& \mbox{in} \; Q, \\[1mm]
\partial_n u & = & 0 & \mbox{on} \; \Sigma, \\[1mm]
u & = & 0 & \mbox{on} \; \Sigma_0 .
\end{array} \end{array}
\right \}
\]
The next iterate $z_{n+1}$ is the optimal control of ($QP_n$), 
provided that it exists, $u_{n+1}$ is the associated
optimal state and $p_{n+1}$ is the associated adjoint state.

The numerical treatment of ($QP_n$) requires the solution of the 
optimality system
\[
\tag{$OS_n$}
\left. \hspace*{-4mm}
\begin{array}{lcl}
\partial_tu - \Delta_x u + R(u_n) + R'(u_n)(u - u_n) + 
\displaystyle \frac{1}{\varrho} \, p&=&0, \; u(0) = u_0, \\[2ex]
-\partial_tp- \Delta_x p + R'(u_n)p + p_n R''(u_n)(u - u_n) &=& u_n - u_d, \;
p(T) = 0
\end{array}
\right \}
\]
subject to homogeneous Neumann conditions. Then we have $u_{n+1} = u$ and 
$p_{n+1} = p$; the new control iterate is $z_{n+1} = - p_{n+1}/\varrho.$ This 
iteration method is called Lagrange-Newton method, because it comes from 
linearizing the whole optimality system. In contrast to this, the SQP 
method would not linearize the state equation. We refer for a general 
exposition to \cite{WA1990,WA1992}, for the convergence analysis for 
semilinear parabolic equations in an $L^\infty$-setting to 
\cite{FT1999,FT10} and the discussion in a Hilbert space setting 
to \cite{MHRPMUSU2009}.

To make all iterates well defined, we will invoke a second-order sufficient 
optimality condition.

\subsection{Second-order sufficient optimality condition and 
convergence of the Lagrange-Newton method}
Let $(\bar u,\bar p,\bar z)$ be a fixed triplet that satisfies the optimality 
system for the optimal control problem \eqref{obj_nonlin}-\eqref{eq_semilin}. 
We say that the triplet fulfils the {\em second-order sufficient optimality 
condition}, if it enjoys the following property of positive definiteness: 
A number $\sigma > 0$ exists, such that
\begin{equation}\label{SSC}
\mathcal{L}''_{u,z}(\bar u,\bar p,\bar z)(u,z)^2 \ge \sigma \, \|z\|_{L^2(Q)}^2
\end{equation}
holds for all pairs $(u,z)$ that obey the linearized equation
\[
\partial_t u - \Delta_x u + R'(\bar u) u = z \; \mbox{in} \; Q, \quad
\partial_n u = 0 \; \mbox{on} \; \Sigma, \quad u=0 \; \mbox{on} \; \Sigma_0 .
\]
It is known that this condition is sufficient for the local optimality of 
$\bar z$ in the sense of $L^2(Q)$, \cite{ECCRFT2015}. Moreover, $\bar z$ 
is unique in a certain $L^2(\Omega)$-neighborhood. It is not in general 
possible to verify this condition by numerical methods. As usual, it is just
a theoretical basis for the analysis.

In the case of the optimal control problem 
\eqref{obj_nonlin}-\eqref{eq_semilin}, the derivative $\mathcal{L}''$
has the form
\[
\mathcal{L}''_{u,z}(\bar u,\bar p,\bar z)(u,z)^2 = 
\|u\|_{L^2(Q)}^2 + \varrho \, \|z\|_{L^2(Q)}^2 - 
\int_Q \bar p \, R''(\bar u) \, u^2 \, dx \, dt.
\]
Therefore, the second-order sufficient optimality condition is satisfied 
in particular, if 
\[
1-\bar p(x,t) \, R''(\bar u(x,t)) \ge 
\sigma \quad \mbox{ for  a.a. } (x,t) \in Q.
\]
For instance, this holds, if $\bar p$ is small, i.e., $\bar u$ is close to
$u_d$. The convergence theorem below is based on the second-order sufficient 
condition. Since it needs box constraints on the control to ensure that 
all iterates belong to a bounded set of $L^\infty(Q)$, we invoke the 
following result:

\begin{lemma} \label{L:bounded}
There is at least one optimal control of  problem 
\eqref{obj_nonlin}-\eqref{eq_semilin} that belongs to $L^\infty(Q)$.
\end{lemma}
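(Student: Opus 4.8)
\textbf{Proof proposal for Lemma \ref{L:bounded}.}

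The plan is to show first that problem \eqref{obj_nonlin}-\eqref{eq_semilin} admits at least one globally optimal control in the natural control space $L^p(Q)$ with $p>1+d/2$, and then to argue that such a control is automatically essentially bounded. The existence of a global minimizer in the unconstrained case is not elementary---as remarked in the text, weak compactness alone does not suffice because the control-to-state map $G$ is nonlinear and not weakly continuous in the relevant topology. I would therefore simply invoke the existence results already cited, namely \cite{CCMMAR2016,ECCRFT2018}, which establish that a globally optimal control $\bar z$ exists. (Alternatively, one could quote the coercivity $f(z)\to\infty$ as $\|z\|_{L^2(Q)}\to\infty$ together with the deeper compactness arguments from those references; but since those results are stated in the excerpt, citing them is the cleanest route.)

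The second and main step is a bootstrap (regularity) argument for the optimal control. Let $\bar z$ be a global minimizer with associated state $\bar u = G(\bar z) \in Y\cap L^\infty(Q)$ and adjoint state $\bar p$ solving the linear backward equation $-\partial_t \bar p - \Delta_x \bar p + R'(\bar u)\bar p = \bar u - \bar u_d$ with homogeneous terminal and Neumann data. The gradient equation gives $\bar z = -\bar p/\varrho$, so it suffices to show $\bar p \in L^\infty(Q)$. Now $\bar u \in L^\infty(Q)$ and $u_d \in L^p(Q)$ with $p>1+d/2$, hence the right-hand side $\bar u - u_d$ lies in $L^p(Q)$; moreover $R'(\bar u) \in L^\infty(Q)$ since $R$ is $C^2$ and $\bar u$ is bounded. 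Standard maximal parabolic regularity / the $L^p$-theory for linear parabolic equations (the same theory underlying Theorem \cite{ECCRFT13} quoted above) then yields $\bar p \in C(\bar Q) \hookrightarrow L^\infty(Q)$, because $p>1+d/2$ is exactly the Sobolev threshold that embeds the parabolic $W^{2,1}_p$-type solution space into continuous functions. Consequently $\bar z = -\bar p/\varrho \in L^\infty(Q)$.

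The main obstacle is genuinely the first step: proving existence of a global optimum for a nonconvex problem without control constraints. This is where one must rely on the ``fairly deep arguments'' of \cite{CCMMAR2016,ECCRFT2018} rather than reproving anything; the rest is a routine bootstrap that parallels the derivation of $L^\infty$-bounds for adjoint states in the semilinear parabolic control literature (see \cite{ECCRFT13}). A minor technical point to watch is that the adjoint regularity result must be applied with homogeneous Neumann boundary conditions and the terminal condition $\bar p(T)=0$, and one should confirm that the quoted $L^p$ parabolic theory covers this boundary-condition case on a bounded Lipschitz domain---which it does, under the standing assumptions of the paper. Once $\bar z\in L^\infty(Q)$ is established, the lemma follows immediately, and this bounded optimal control is the reference solution around which the second-order sufficient condition \eqref{SSC} and the convergence analysis of the Lagrange-Newton method are subsequently formulated.
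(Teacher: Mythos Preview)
Your proposal is correct and follows essentially the same bootstrap as the paper: cite the external existence results, use boundedness of the optimal state, deduce $L^\infty$-regularity of the adjoint from the $L^p$ right-hand side, and conclude $\bar z=-\bar p/\varrho\in L^\infty(Q)$ via the gradient equation. The only minor difference is in the intermediate citations: the paper invokes Theorem~2.4 of \cite{ECCRFT2018} to obtain an optimal control in $L^\infty(0,T;L^2(\Omega))$ and then Theorem~2.1 of the same reference for state boundedness, whereas you work directly within the $L^p(Q)$ framework of the theorem from \cite{ECCRFT13} quoted in the text; the remaining steps coincide.
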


\begin{proof}
We rely on Theorem 2.4 of \cite{ECCRFT2018} that guarantees the existence 
of at least one optimal control $\bar u$ of 
\eqref{obj_nonlin}-\eqref{eq_semilin} that is bounded in 
$L^\infty(0,T;L^2(\Omega))$. Therefore, the search for a control can be 
restricted to the set $\{u \in L^\infty(0,T;L^2(\Omega)): 
\|u\|_{L^\infty(0,T;L^2(\Omega))} \le R\}$, where 
$R =  \|\bar u\|_{L^\infty(0,T;L^2(\Omega))}$. Thanks to the existence and 
regularity Theorem 2.1 of \cite{ECCRFT2018},
all associated states are bounded in $L^\infty(Q)$. The right-hand side 
of the associated adjoint equation is $\bar u - u_d \in L^p(Q)$, since 
we assumed $u_d \in L^p(Q)$. Therefore, the adjoint state $\bar p$
is also a function of $L^\infty(Q)$. This property transfers to $\bar u$ 
by the gradient equation $\bar u = - \bar p / \varrho$, hence the 
existence of an optimal control in $L^\infty(Q)$ is proved.
\end{proof}

\noindent
The following result is known for the convergence of the Lagrange-Newton-SQP 
method for problems of the type \eqref{obj_nonlin}-\eqref{eq_semilin} with 
additional box constraints
\begin{equation} \label{boxcon}
a \le u(x,t) \le b \quad \mbox{for a.a. } (x,t) \in Q,
\end{equation}
where $-\infty < a < b < \infty$:

\begin{theorem}[Convergence of the Lagrange-Newton method]
Let $(\bar u,\bar p,\bar z)$ be a triplet that satisfies the optimality 
system for the optimal control problem
\eqref{obj_nonlin}-\eqref{eq_semilin} with additional box constraints
\eqref{boxcon}. Assume that this triplet satisfies the second-order
sufficient condition \eqref{SSC}.  

Then the Lagrange-Newton method converges locally and quadratic to 
$(\bar u,\bar p,\bar z)$. This means the following: There exist 
$r > 0, \, C > 0$ such that, if the initial iterate 
$(u_0,p_0,z_0) \in(Y\cap L^\infty(Q))^2\times L^\infty(Q)$  satisfies
\[
\|(u_0,p_0,z_0)-(\bar u,\bar p,\bar z)\|_{L^\infty(Q)^3}\le r,
\]
then the system ($OS_n$) is uniquely solvable for all $n \ge 0$.  
The iterates fulfill 
\[
\|(u_n,p_n,z_n)-(\bar u,\bar p,\bar z)\|_{L^\infty(Q)^3}\le r,\quad \forall n \ge 1,
\]
and 
\[
\|(u_{n+1},p_{n+1},z_{n+1})-(\bar u,\bar p,\bar z)\|_{L^\infty(Q)^3} \le 
C \, \|(u_n,p_n,z_n)-(\bar u,\bar p,\bar z)\|_{L^\infty(Q)^3}^2
\]
for all $n \ge 0$.
\end{theorem}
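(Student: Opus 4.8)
The plan is to recognize the Lagrange-Newton-SQP iteration as the classical Newton method applied to a single nonlinear operator equation $\mathcal{F}(u,p,z)=0$ in the Banach space $(Y\cap L^\infty(Q))^2\times L^\infty(Q)$, where $\mathcal{F}$ collects the three components of the first-order optimality system for \eqref{obj_nonlin}-\eqref{eq_semilin} with box constraints \eqref{boxcon}: the linearized-type state equation, the adjoint equation, and (in the presence of the box constraints) the projected gradient equation $z = \mathbb{P}_{[a,b]}(-p/\varrho)$. The iterates $(u_{n+1},p_{n+1},z_{n+1})$ produced by solving ($OS_n$) — together with the gradient update — are precisely the Newton iterates $(u_{n+1},p_{n+1},z_{n+1}) = (u_n,p_n,z_n) - \mathcal{F}'(u_n,p_n,z_n)^{-1}\mathcal{F}(u_n,p_n,z_n)$, because ($OS_n$) is exactly the linearization of $\mathcal{F}=0$ about $(u_n,p_n,z_n)$. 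Once this identification is made, local quadratic convergence follows from the Newton-Kantorovich theorem, provided (i) $\mathcal{F}$ is $C^1$ near $(\bar u,\bar p,\bar z)$ with locally Lipschitz derivative, and (ii) $\mathcal{F}'(\bar u,\bar p,\bar z)$ is boundedly invertible.

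First I would set up the functional-analytic framework carefully: using the assumption $R\in C^2$ with locally Lipschitz $R''$ and the regularity Theorem from \cite{ECCRFT13} on the control-to-state map $G$, I would verify that each component of $\mathcal{F}$ is continuously differentiable on a neighborhood of $(\bar u,\bar p,\bar z)$ in $(Y\cap L^\infty(Q))^2\times L^\infty(Q)$, and that $\mathcal{F}'$ is Lipschitz there; the terms requiring care are those containing $R'(u)$ and $p\,R''(u)$, whose differentiability in the $L^\infty$-topology uses exactly the local Lipschitz property of $R''$ together with the $L^\infty$-boundedness of states and adjoints. For the projection term $\mathbb{P}_{[a,b]}$, which is only Lipschitz (not $C^1$), I would instead argue via semismoothness: $\mathcal{F}$ is semismooth, and the Newton iteration becomes the semismooth Newton method, for which superlinear (here quadratic, by the extra regularity) local convergence holds once the generalized derivatives are uniformly invertible. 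Alternatively, since the excerpt's convergence theorem is quoted verbatim from the literature, I would simply cite \cite{FT1999,FT10} for the $L^\infty$-setting and \cite{MHRPMUSU2009} for the Hilbert-space analysis, and restrict the proof here to explaining how the second-order sufficient condition \eqref{SSC} enters.

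The main obstacle — and the heart of the proof — is establishing bounded invertibility of $\mathcal{F}'(\bar u,\bar p,\bar z)$, i.e., unique solvability with a priori bounds for the linearized optimality system. This is where the second-order sufficient condition \eqref{SSC} is essential: the linearized optimality system ($OS_n$) at the reference point is itself the optimality system of a linear-quadratic optimal control problem whose reduced Hessian is $\mathcal{L}''_{u,z}(\bar u,\bar p,\bar z)$; the coercivity estimate $\mathcal{L}''_{u,z}(\bar u,\bar p,\bar z)(u,z)^2 \ge \sigma\|z\|_{L^2(Q)}^2$ on the linearized constraint manifold is exactly the inf-sup/ellipticity ingredient needed to apply an argument in the spirit of Lemma \ref{Lemma inf sup L2} and the Ne\v{c}as-Babu\v{s}ka theorem, now in the spaces adapted to the semilinear problem. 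I would combine this $L^2$-coercivity with the parabolic regularity theory (Theorem 2.1 of \cite{ECCRFT2018}) to bootstrap the solution bounds to $L^\infty(Q)$, thereby obtaining invertibility of $\mathcal{F}'$ as a map between the $L^\infty$-based spaces. Lemma \ref{L:bounded} guarantees that the reference triplet can be taken bounded, so that all the local constants $L(M)$, $\sigma$, and the parabolic regularity constants are uniform on the ball of radius $r$; this uniformity is what propagates the estimate $\|(u_n,p_n,z_n)-(\bar u,\bar p,\bar z)\|_{L^\infty(Q)^3}\le r$ inductively and closes the quadratic convergence bound. With invertibility of $\mathcal{F}'(\bar u,\bar p,\bar z)$, its Lipschitz continuity, and the inductive ball estimate in hand, the standard Newton-Kantorovich contraction argument yields the constants $r>0$, $C>0$ and the claimed quadratic rate.
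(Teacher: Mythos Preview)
The paper does not actually prove this theorem: immediately after the statement it only remarks that ``the proof is a bit delicate'' because of the \emph{two-norm discrepancy} --- the second-order sufficient condition \eqref{SSC} gives coercivity in $L^2(Q)$ while differentiability of $\mathcal{L}$ must be taken in $L^\infty(Q)$ --- and then refers to \cite{FT1999} for the full proof with box constraints (and to \cite{DP2011}, \cite{MHRPMUSU2009} for settings without the discrepancy). Your outline is essentially the correct strategy and matches what is done in \cite{FT1999}: cast the iteration as Newton's method for an operator equation, verify Lipschitz differentiability of the Lagrangian derivatives in $L^\infty$, and use \eqref{SSC} to obtain invertibility of the linearized system. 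You even address the two-norm discrepancy implicitly when you propose to combine the $L^2$-coercivity from \eqref{SSC} with parabolic $L^\infty$-regularity to bootstrap the bounds; it would strengthen your write-up to name this mechanism explicitly, since the paper singles it out as the crux. One point to adjust: in the Lagrange--Newton--SQP framework that the paper and \cite{FT1999} use, the box constraints are not handled by writing $z=\mathbb{P}_{[a,b]}(-p/\varrho)$ as a semismooth equation and differentiating through the projection; rather, the constraints are passed to the quadratic subproblems $(QP_n)$, whose optimality systems $(OS_n)$ are then analyzed. The semismooth-Newton viewpoint you sketch is a legitimate alternative route, but it is not the one the paper invokes.
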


\noindent
The proof is a bit delicate, because the $L^2(Q)$-norm appears in the 
second-order sufficient condition, while the differentiability of 
$\mathcal{L}$ is considered in $L^\infty(Q)$. This is the so-called 
two-norm discrepancy. For a proof with additional control constraints 
$a \le z \le b$, we refer to \cite{FT1999}. For problems, where the 
two-norm-discrepancy does not appear, a convergence analysis for the 
Newton method in the unconstrained method is given in \cite{DP2011} 
and, in the context of optimal control, in \cite{MHRPMUSU2009}.

The Lagrange-Newton-SQP method differs from the Lagrange-Newton me\-thod 
by adding the box constraints \eqref{boxcon} to the subproblems (QP$_n$) 
and the associated projection formula to the optimality system
(QP$_n$). In our implementation, we formally added box constraints with 
$a = -10^6$, $b = 10^6$, justified by Lemma \ref{L:bounded}. These bounds 
became never active. Hence, the Lagrange-Newton-SQP method was equivalent 
to the Lagrange-Newton method described in the last subsection. 

\subsection{Numerical experiments}
For the nonlinear first order necessary optimality system, when
considering a nonlinear reaction term in the state equation or in 
the presence of box constraints on the control, we apply a (semismooth) 
Newton method in the outer iteration. Usually, we need about $2-8$ 
iterations to reach a precision of 
$10^{-8}$ 
of the relative residual error. 
Inside each Newton iteration, we apply the same algebraic multigrid 
preconditioned GMRES solver as used for the linear system. However, the
performance study and the development of robust and efficient solvers are
beyond the scope of this work; we will investigate them somewhere else.  

\subsubsection{Unconstrained control with explicitly known optimal 
solution (Example 3)}
Following the approach provided in \cite{FT10}, we construct an exact 
solution for the following modified optimal control problem of a 
semilinear parabolic equation:  
\[
\min J(u,z) := \frac{1}{2} \int_Q(u - u_d)^2 \, dx \, dt + 
\frac{\varrho}{2} \int_Q z^2 \, dx \, dt
\]
subject to 
\[
\partial_t u - \Delta_x u + R(u) = z + e_u \; \mbox{in } Q, \quad 
u = 0 \; \mbox{on } \Sigma, \quad
u = 0 \; \mbox{on } \Sigma_0,
\]
where the function $e_u$ is defined such that a desired pair $u$, $z$ 
is optimal. The first order necessary optimality system for this 
semilinear model problem reads as follows (see \cite{FT10}):
It is composed of the state equation
\begin{equation}\label{optimality_system_p31}
\partial_t u-\Delta_x u  + R( u ) + \frac{1}{\varrho} p
= e_u \textup{ in } Q,\quad 
u = 0 \textup{ on } \Sigma, \quad u  =  0 \textup{ on } \Sigma_0,
\end{equation}
and the adjoint equation
\begin{equation}\label{optimality_system_p312}
-\partial_t p-\Delta_x p + R'(u)p = u-u_d   \textup{ in } Q,\quad 
p = 0   \textup{ on } \Sigma,\quad
p = 0   \textup{ on } \Sigma_T.
\end{equation}
The desired solutions of the optimality system are given by 
\begin{equation*}
\begin{aligned}
&u(x,t)=\sin(\pi x_1)\sin(\pi x_2)\left(at^2+bt\right),\\
&p(x,t)=-\varrho\sin(\pi x_1)\sin(\pi x_2)
\left( 2\pi^2 a t^2+(2\pi^2b + 2a)t+b\right),\\
&z(x,t)=\sin(\pi x_1)\sin(\pi x_2)\left(2 \pi^2 a t^2+(2\pi^2b + 2a)t+b
\right),\\
&R(u)=u(u-0.25)(u+1),\\
\end{aligned}
\end{equation*}
where $a=-\frac{2\pi^2+1}{2\pi^2+2}$, $b=1$, and 
$\varrho=10^{-4}$. 
It is easy to
see that the state obeys homogeneous initial and boundary conditions, and
the adjoint state satisfies homogeneous terminal and boundary conditions;
see the illustration in Fig. \ref{fig:sol_p31}. The functions $e_u$
and $u_d$ are computed by inserting the above solutions to the system  
\eqref{optimality_system_p31} and \eqref{optimality_system_p312}.  

\begin{figure}[h]
  \centering
    \includegraphics[width=0.4\textwidth]{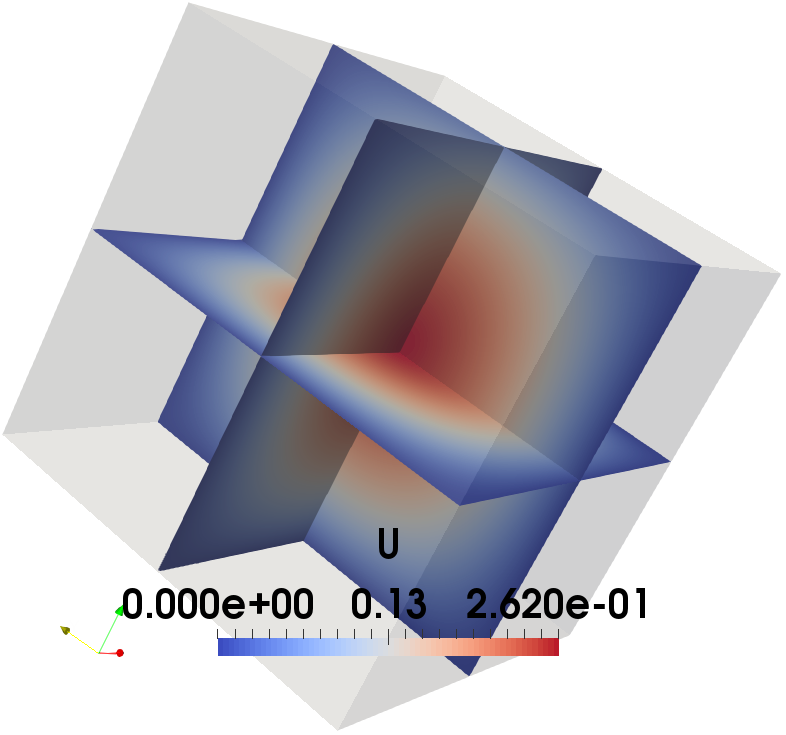}
    \includegraphics[width=0.4\textwidth]{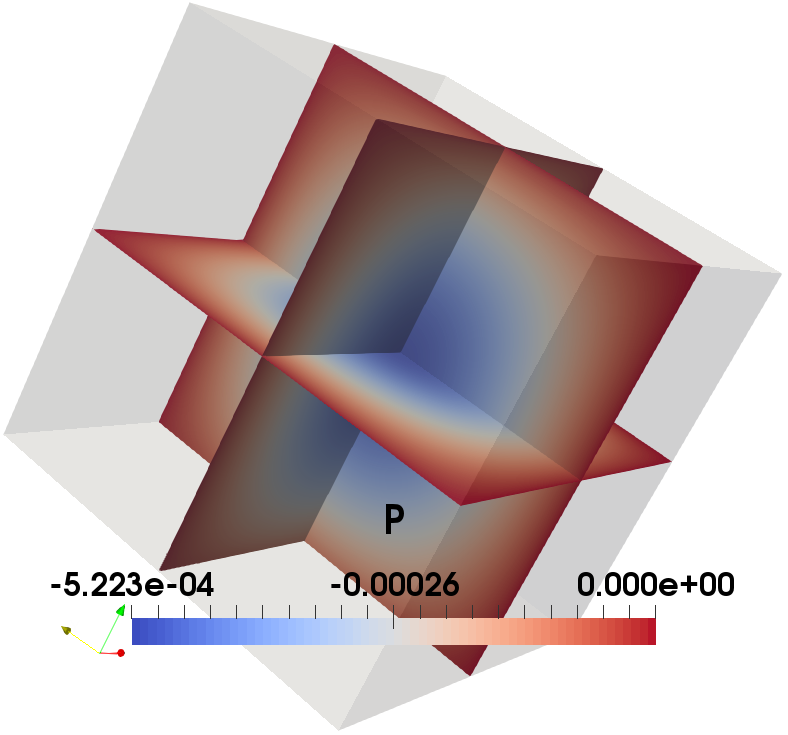}
    \includegraphics[width=0.4\textwidth]{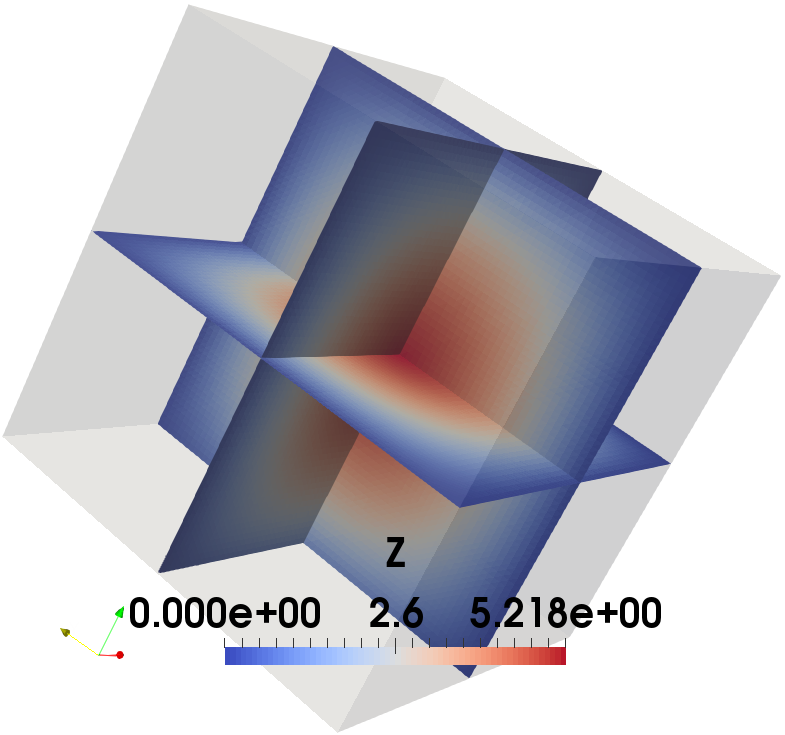}  
    \includegraphics[width=0.4\textwidth]{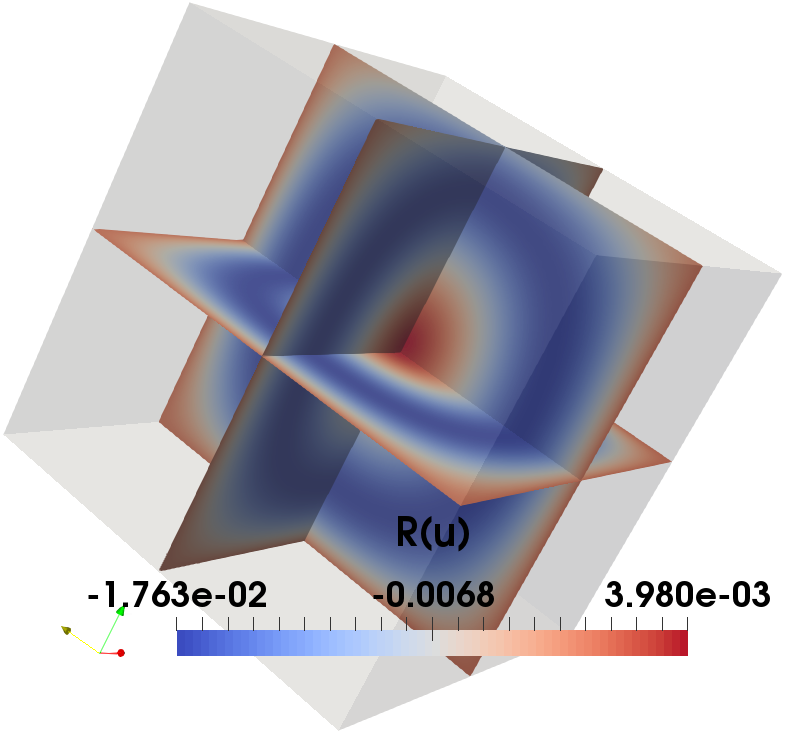}  
  \caption{Example 3, numerical solutions of $u$, $p$, and $z$, and 
    $R(u)$ for the semilinear model problem
    without control constraint.}
    \label{fig:sol_p31}
\end{figure} 

The estimated order of convergence is given in Tables
\ref{tab:l2_eocl2h1_p31}-\ref{tab:l2_eocobj_p31}. As we expect, we
observe optimal convergence rates in $Y=L^2(0,T; H^1_0(\Omega))$. However, the
convergence rate is not that good as expected in $L^2(Q)$ for this particular
example. This requires further investigation. Moreover, we see almost
second-order convergence for the objective functional. 
 
\begin{table}[h]
\caption{Example 3, estimated order of convergence (eoc) of $u_h, p_h$ in
    $Y$ for the semilinear model problem without
    control constraint.} 
\centering
\begin{tabular}{rrrrrr}
\toprule
\#Dofs & $h$ & $\|u-u_h\|_Y$ & eoc & $\|p-p_h\|_Y$ & eoc  \\
\midrule
$250$ & $2^{-2}$& $2.344e-1$    & $-$  & $8.136e-4$    & $-$       \\
$1,458$ & $2^{-3}$& $1.159e-1$    & $1.017$  &$2.795e-4$    & $1.541$  \\
$9,826$ & $2^{-4}$ & $5.690e-2$    & $1.026$ & $1.193e-4$    & $1.228$  \\
$71,874$ & $2^{-5}$ & $2.815e-2$    & $1.015$ & $5.691e-5$    & $1.068$   \\
$549,250$ & $2^{-6}$ &$1.400e-2$    & $1.008$ & $2.801e-5$    & $1.023$ \\
$2,146,689$ & $2^{-7}$ &$6.982e-3$    & $1.003$ & $1.394e-5$    & $1.007$ \\
\bottomrule
\end{tabular}\label{tab:l2_eocl2h1_p31}
\end{table}

\begin{table}[h]
\caption{Example 3, estimated order of convergence of $u_h, p_h$ in
    $L^2(Q)$ for the semilinear model problem without control constraint.}  
\centering
\begin{tabular}{rrrrrr}
\toprule
\#Dofs &  $h$ & $\|u-u_h\|_{L^2(Q)}$ & eoc & $\|p-p_h\|_{L^2(Q)}$ & eoc \\
\midrule
$250$ & $2^{-2}$& $1.315e-2$    & $-$  & $9.435e-5$    & $-$   \\
$1,458$ & $2^{-3}$& $3.692e-3$    & $1.833$  &$2.104e-5$    & $2.165$  \\
$9,826$ & $2^{-4}$ & $1.008e-3$    & $1.873$ & $4.770e-6$   & $2.141$  \\
$71,874$ & $2^{-5}$ & $2.621e-4$    & $1.943$ & $1.213e-6$    & $1.976$ \\
$549,250$ &  $2^{-6}$ &$7.218e-5$    & $1.861$ & $3.542e-7$    & $1.775$ \\
$2,146,689$ &  $2^{-7}$ &$3.180e-5$    & $1.183$ & $1.417e-7$    & $1.321$ \\
\bottomrule
\end{tabular}\label{tab:l2_eocl2l2_p31}
\end{table}

\begin{table}[h]
\caption{Example 3, $J(u_h,z_h)$ and $|J(u_h,z_h)-J(u,z)|$
for the semilinear model problem without control constraint, 
where $J(u,z)=1.98767e-4$.} 
\centering
\begin{tabular}{rrrrr}
\toprule
\#Dofs &  $h$ & $J(u_h, z_h)$ &  $|J(u_h, z_h)-J(u,z)|$ & eoc   \\
\midrule
$250$ & $2^{-2}$& $4.60861e-4$      & $2.6209e-4$    & $-$   \\
$1,458$ & $2^{-3}$& $2.37900e-4$     &$3.9133e-5$    & $2.744$  \\
$9,826$ & $2^{-4}$ & $2.06470e-4$    & $7.7030e-6$    & $2.345$  \\
$71,874$ & $2^{-5}$ & $2.00532e-4$   & $1.7650e-6$    & $2.126$ \\
$549,250$ &  $2^{-6}$ &$1.99206e-4$   & $4.3900e-7$    & $2.007$ \\
$2,146,689$ &  $2^{-7}$ &$1.98887e-4$   & $1.2000e-7$    & $1.871$ \\
\bottomrule
\end{tabular}\label{tab:l2_eocobj_p31}
\end{table}

\subsubsection{Box constrained control with explicitly known 
optimal solution (Example 4)}
In this example, we minimize
\[
J(u,z):=\frac{1}{2}\int_Q (u-u_d)^2 \, dx \, dt + 
\frac{\varrho}{2} \int_Q z^2 \, dx \, dt + \int_Q e_z \, z \, dx \, dt
\]
subject to 
\[
\partial_t u - \Delta_x u + R(u) = z + e_u \textup{ in } Q, \quad
u = 0 \textup{ on } \Sigma,\quad
u = 0 \textup{ on } \Sigma_0,
\]
and 
\[
a \leq z(x,t) \leq b \quad \textup{ for a.a. } (x,t) \in Q. 
\]
For this optimal control problem, we compute the functions $e_u$ and $e_z$
such that the desired solutions $u$, $p$ and $z$ satisfy the first order 
necessary optimality conditions. This system consists of the state equation
\[
\partial_t u  - \Delta_x u   + R(u)  = z + e_u \text{ in } Q,\quad
    u    =     0 \text{ on }  \Sigma, \quad
    u   =    0  \text{ on }  \Sigma_0,
\]
the adjoint equation
\[
    -\partial_t p - \Delta_x p  + R'(u)p  =  u - u_d \text{ in } Q,\quad
    p  =   0 \text{ on }\Sigma,\quad
    p  =   0 \text{ on } \Sigma_T,
\]
and the gradient equation
\[
z = {\mathbf P}_{[a,b]} \left(-\frac{1}{\varrho}\left(p  + e_z\right) 
\right) \text{ in } Q.
\]
The projection formula is equivalent to the variational inequality
\begin{equation}\label{eq:varinq}
\int_Q (p+\varrho z + e_z)(\hat{z}-z) \, dx \, dt \geq 0 
\textup{ for all } \hat{z}\in [ a , b ],
\end{equation}
for more details, see \cite{FT10}.

We now prescribe the solutions of the optimality system as follows:
\begin{equation*}
\begin{aligned}
  &u(x,t)=\sin(\pi x_1)\sin(\pi x_2)\left(ct^2+dt\right),\\
  &p(x,t)=-\varrho\sin(\pi x_1)\sin(\pi x_2)
\left( 2\pi^2 c t^2+(2\pi^2d + 2c)t+d\right),\\
  &z(x_1, x_2 , t)=
  \begin{cases}
    -1 &\textup { if }   0 \leq x_2 \leq -x_1 + 1/2  \textup{ and } 0\leq x_1\leq
    1/2, \\
    1  &\textup { if }  - x_1 +3/2 \leq x_2 \leq 1 \textup{ and  }  1/2 \leq
    x_1\leq 1, \\
    2x_1 + 2 x_2 - 2 &\textup { else },\\
  \end{cases}\\
\end{aligned}
\end{equation*}
where $c=-\frac{2\pi^2+1}{2\pi^2+2}$ and $d=1$. As nonlinearity, we fix
$R(u)=u(u-0.25)(u+1)$. For the constraints, we use the bounds 
$a=-1$ and $b=1$, and we set $\varrho=0.001$ as regularization parameter. The
constructed solutions $u$ and $p$ fulfill the 
initial/terminal and boundary conditions for the state and adjoint; see the
numerical solutions in Fig.~\ref{fig:numsol_p33} for an illustration. 

\begin{figure}[h]
  \centering
  \includegraphics[width=0.32\textwidth]{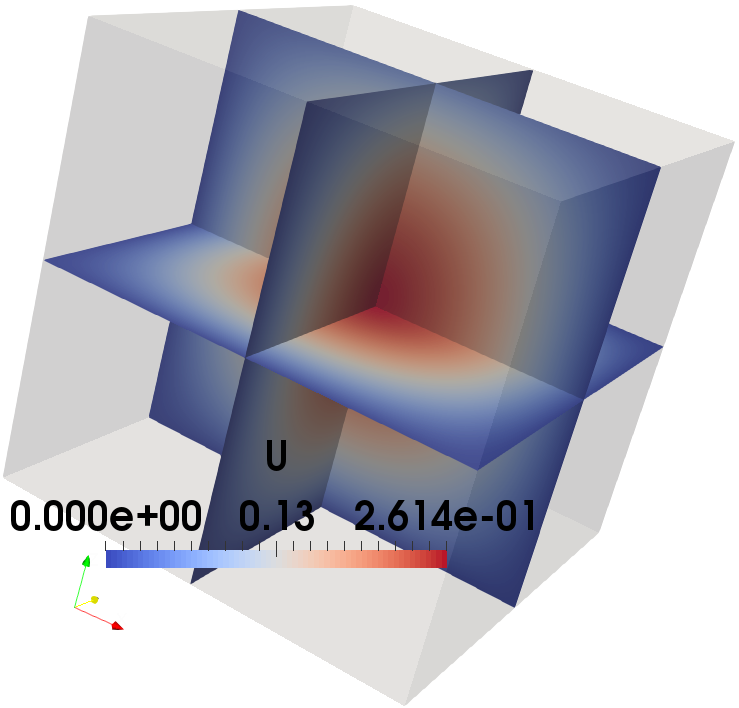}
  \includegraphics[width=0.32\textwidth]{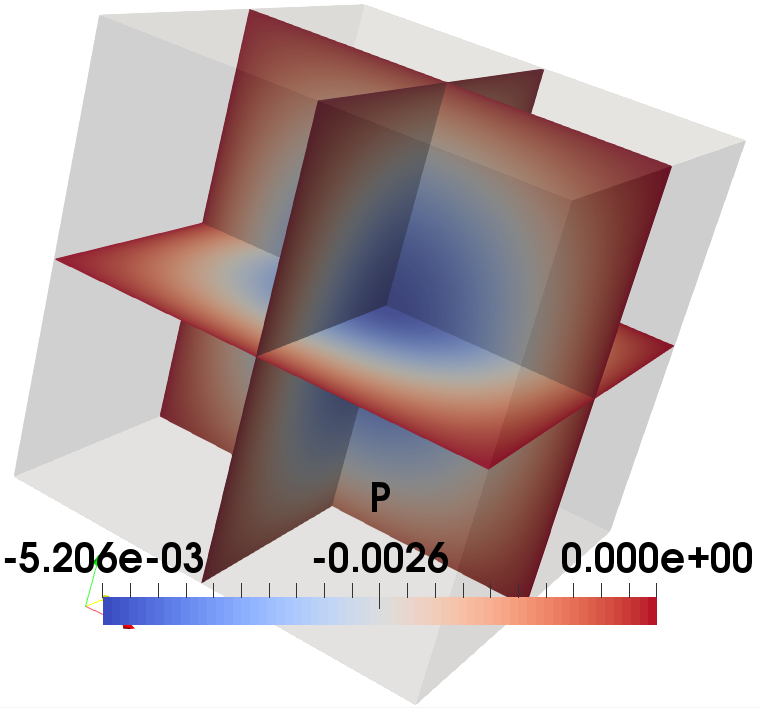}
  \includegraphics[width=0.32\textwidth]{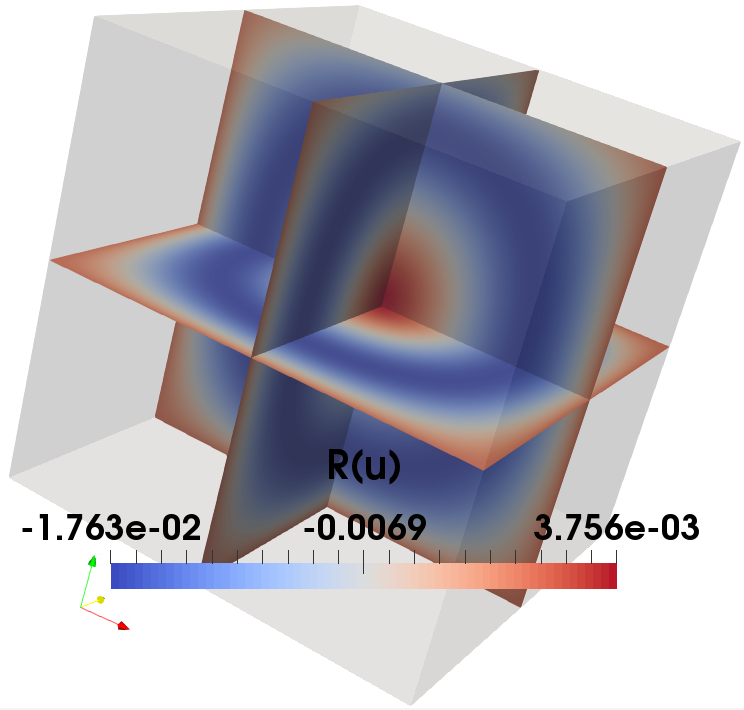}
  \includegraphics[width=0.32\textwidth]{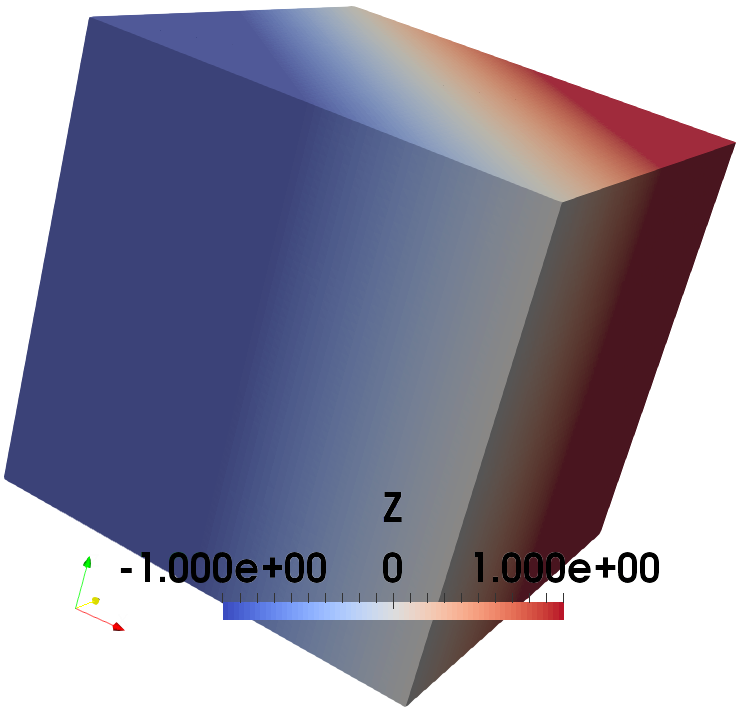}
  \includegraphics[width=0.32\textwidth]{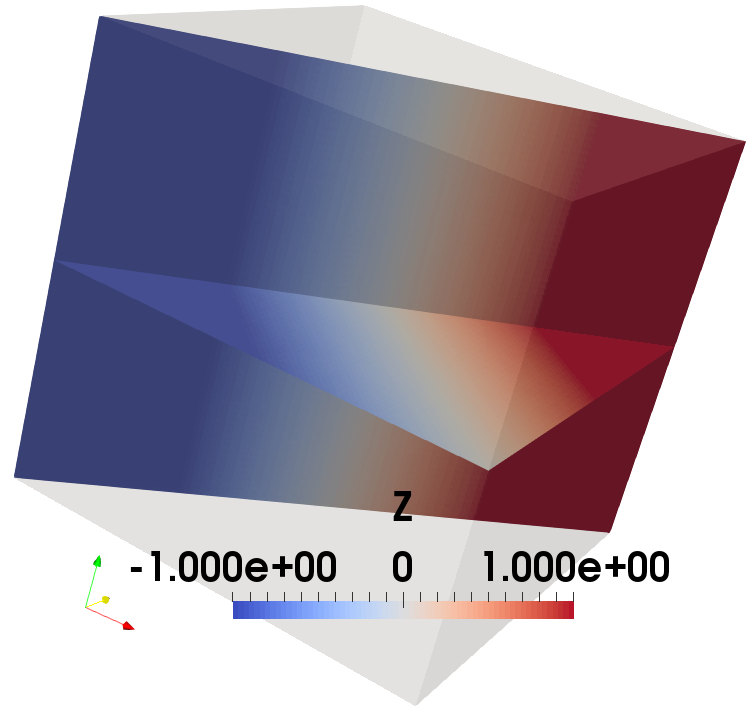}
  \caption{Example 4, numerical solutions of $u$, $p$, and $z$, and $R(u)$ for
    the semilinear model problem with box constraint on control.} 
  \label{fig:numsol_p33}
\end{figure} 

Inserting the exact solutions prescribed above in the primal and adjoint
equations, respectively, the unknown functions $e_u$ and $u_d$ are obtained. 
Along with the defined active and inactive sets
\begin{equation}\label{eq:actinactset}
  \begin{aligned}
    &{\mathcal A}_{a}:=\{(x,t)\in Q: -\varrho^{-1}(p+e_z) < a \}, \\
    &{\mathcal A}_{b}:=\{(x,t)\in Q: -\varrho^{-1}(p+e_z) > b \}, \\
    &{\mathcal I}:=Q\backslash\{{\mathcal A}_{a}\cup {\mathcal A}_{b} \},
  \end{aligned}
\end{equation} 
using the variational inequality \eqref{eq:varinq}, we can construct the
remaining unknown function $e_z$ as follows:
\begin{equation}\label{eq:ezfunc}
  e_z=
  \begin{cases}
    {(p+\varrho a)_{-}} \textup{ on } {\mathcal A}_{a},\\
    -(p+\varrho z)  \textup{ on } {\mathcal I},\\
    {-(p+\varrho b)_{+}}\textup{ on } {\mathcal A}_{b}.
  \end{cases}
\end{equation}
Now, starting from the variational inequality \eqref{eq:varinq}, using 
a piecewise constant ansatz for the control, we arrive at the 
elementwise projection formula
\begin{equation}\label{eq:projform}
z_\ell={\mathbf P}_{[a,b]}\left( -\frac{1}{\varrho |\tau_\ell|} 
\int_{\tau_\ell} (p_h + e_z) \, dx \, dt \right), \quad
\forall \tau_\ell \in {\mathcal{T}}_h(Q).
\end{equation}
Inserting this formula in the discrete state equation, we obtain a
coupled nonlinear first order necessary optimality system for the state and
adjoint variables. This discrete nonlinear system is solved by the semismooth
Newton method. After having computed $p_h$, in a final postprocessing step, we
use the projection formula \eqref{eq:projform} to 
compute the optimal control as a piecewise linear and continuous function.  

The estimated order of convergence in $Y=L^2(0,T; H^1_0(\Omega))$ is 
displayed in Table~\ref{tab:l2_eocl2h1_p33}. We clearly observe an 
optimal convergence rate. For this example, we do not have optimal convergence
in $L^2(Q)$; see Table \ref{tab:l2_eocl2l2_p33}. However, we see a
second-order convergence rate for the objective functional;
cf.~Table~\ref{tab:l2_eocobj_p33}.  

\begin{table}[h]\caption{Example 4, estimated order
    of convergence (eoc) of $u_h, p_h$ in 
    $Y$ for the semilinear model
    problem with box constraints.} 
\centering
\begin{tabular}{rrrrrr}
\toprule
\#Dofs & $h$ & $\|u-u_h\|_Y$ & eoc & $\|p-p_h\|_Y$ & eoc  \\
\midrule
$250$ & $2^{-2}$& $2.121e-1$    & $-$  & $5.272e-3$    & $-$       \\
$1,458$ & $2^{-3}$& $1.126e-1$    & $0.913$  &$2.396e-3$    & $1.138$  \\
$9,826$ & $2^{-4}$ & $5.653e-2$    & $0.995$ & $1.142e-3$    & $1.069$  \\
$71,874$ & $2^{-5}$ & $2.817e-2$    & $1.008$ & $5.608e-4$    & $1.026$   \\
$549,250$ & $2^{-6}$ &$1.401e-2$    & $1.005$ & $2.790e-4$    & $1.007$ \\
$2,146,689$ & $2^{-7}$ &$7.017e-3$    & $0.997$ & $1.407e-4$    & $0.988$  \\
\bottomrule
\end{tabular}\label{tab:l2_eocl2h1_p33}
\end{table}

\begin{table}[h]\caption{Example 4, estimated order of convergence (eoc)
of $u_h, p_h, z_h$ in $L^2(Q)$ for the semilinear model
    problem with box constraints ($\|\cdot\|=\|\cdot\|_{L^2(Q)}$).} 
\centering
\begin{tabular}{rrrrrrrr}
\toprule
\#Dofs &  $h$ & $\|u-u_h\|$ & eoc & $\|p-p_h\|$ & eoc &
$\|z-z_h\|$ & eoc \\
\midrule
$250$ & $2^{-2}$& $2.133e-2$ & $-$ & $3.811e-4$ & $-$ & $3.072e-1$ & $-$ \\
$1,458$ & $2^{-3}$& $5.873e-3$ & $1.861$ &$1.018e-4$ & $1.905$ & $9.984e-2$ 
& $1.622$  \\
$9,826$ & $2^{-4}$ & $1.566e-3$ & $1.907$ & $2.489e-5$ & $2.032$ & $3.251e-2$ 
& $1.619$ \\
$71,874$ & $2^{-5}$ & $4.733e-4$ & $1.727$ & $6.355e-6$ & $1.970$ &$1.379e-2$ 
& $1.237$ \\
$549,250$ & $2^{-6}$ & $2.224e-4$ & $1.089$ & $3.731e-6$ & $0.768$ &$7.435e-3$ 
& $0.891$ \\
$2,146,689$ & $2^{-7}$ &$1.741e-4$ & $0.353$ & $3.793e-6$ & $-0.024$ 
& $5.016e-3$ & $0.568$  \\
\bottomrule
\end{tabular}\label{tab:l2_eocl2l2_p33}
\end{table}

\begin{table}[h]
\caption{Example 4, $J(u_h,z_h)$ and $|J(u_h,z_h)-J(u,z)|$ for the semilinear model
problem with box constraint on control, where $J(u,z)=5.1743e-4$.} 
\centering
\begin{tabular}{rrrrr}
       \toprule
       \#Dofs &  $h$ & $J(u_h, z_h)$ &  $|J(u_h, z_h)-J(u,z)|$ &  eoc   \\
       \midrule
       $250$ & $2^{-2}$& $1.8162e-3$      & $1.299e-3$    & $-$   \\
       $1,458$ & $2^{-3}$& $8.1878e-4$     &$3.014e-4$    & $2.108$  \\
       $9,826$ & $2^{-4}$ & $5.8804e-4$    & $7.061e-5$    & $2.094$  \\
       $71,874$ & $2^{-5}$ & $5.3448e-4$   & $1.705e-5$    & $2.050$ \\
       $549,250$ &  $2^{-6}$ &$5.2161e-4$   & $4.180e-6$    & $2.028$ \\
       $2,146,689$ &  $2^{-7}$ &$5.1846e-4$   & $1.030e-6$    & $2.021$ \\
       \bottomrule
     \end{tabular}\label{tab:l2_eocobj_p33}
\end{table}

\subsubsection{Example with a turning wave (Example 5)}
As final example,  we consider the following optimal control problem:
\[
\min{\mathcal J} (u,z) := \frac{1}{2}\int_Q ( u  - u_d )^2 \, dx \, dt
  + \frac{\varrho}{2} \int_Q z^2 \, dx \, dt
\]
subject to the state equation
\[
\partial_t u  - \Delta_x u + R(u) = z \text{ in } Q,\quad
\partial_n u = 0 \text{ on }  \Sigma,\quad
u = u_0 \text{ on } \Sigma_0,
\]
and
\[
a \leq z(x,t) \leq b \quad \text{ for a.a. } (x,t)\in Q.
\]
This simplified model problem is an adapted version of that one provided
in \cite{ECCRFT13}, here without $L^1$-regularization. The first-order 
necessary optimality system for this model problem reads as
follows: In addition to the above state equation,
the adjoint equation is
\[
- \partial_t p - \Delta_x p  + R'(u) p  =  u - u_d \text{ in } Q,\quad
\partial_n p  = 0 \text{ on } \Sigma,\quad 
p = 0 \text{ on } \Sigma_T,       
\]
and the gradient equation reads
\[
z={\mathbf P}_{[a,b]}\left(-\frac{1}{\varrho} p \right)\text{ in } Q.
\]
As for the turning wave example constructed in \cite{ECCRFT13}, we define 
the nonlinear reaction term $R(u)=u(u-0.25)(u+1)$, the initial condition 
\[
u_0 = \left(1+\exp\left(\frac{\frac{70}{3}-70x_1}{\sqrt{2}} 
\right)\right)^{-1}+\left(1+\exp\left(\frac{70
x_1-\frac{140}{3}}{\sqrt{2}}\right)\right)^{-1}-1\textup{ on }\Sigma_0 
\] 
for the state, and the target
\begin{equation*}
    \begin{aligned}
      &{u}_d=\left( 1.0+ \exp \left( \frac{\cos ( g(t)
        )\left(\frac{70}{3}- 70x_1\right)+ \sin ( g(t) ) \left(\frac{70}{3} -
        70x_2\right) }{\sqrt{2}} \right) \right)^{-1}\\
      &\quad + \left( 1.0+ \exp \left( \frac{ \cos ( g(t) ) \left( 70x_1 -
        \frac{140}{3}\right) + \sin ( g(t) ) \left( 70x_2 -
        \frac{140}{3}\right)}{\sqrt {2}} \right) \right)^{-1} - 1
    \end{aligned}
  \end{equation*}
in $Q$, where $g(t)=\frac{2\pi}{3}\min\left\{\frac{3}{4}, t\right\}$. We 
should mention that the target defined in \cite{ECCRFT13} contained a typo; 
this is corrected here. The wave front turns $90$ degrees from time 
$t=0$ to $t=0.75$ and remains fixed after
$t=0.75$; see the target at $t=0$, $0.25$, $0.5$, and $0.75$ illustrated in
Fig.~\ref{fig:targetturningwave}.     

\begin{figure}[htb]
    \centering
    \includegraphics[scale=0.138]{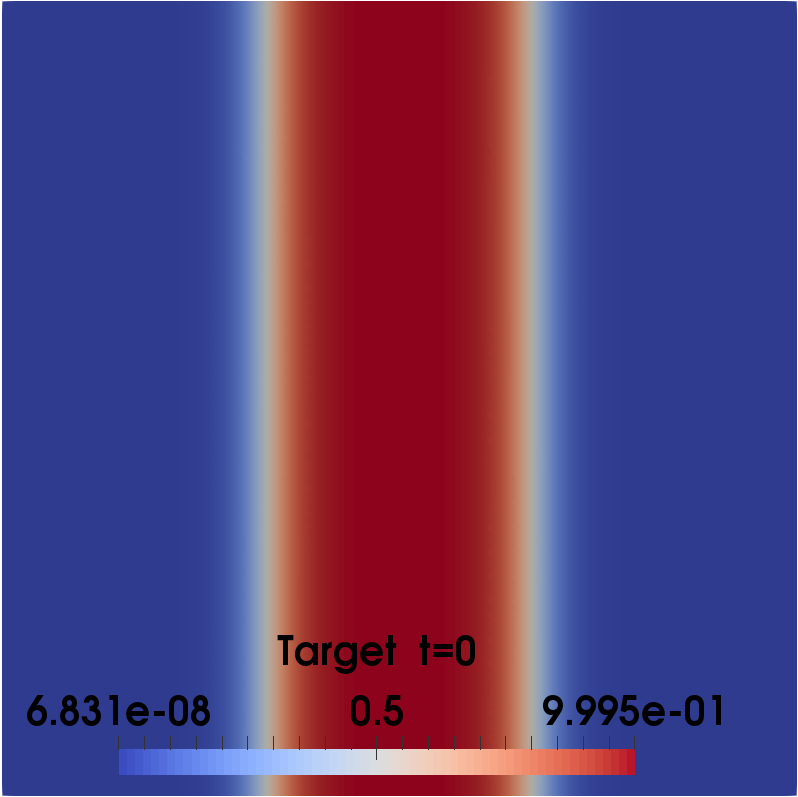}
    \includegraphics[scale=0.138]{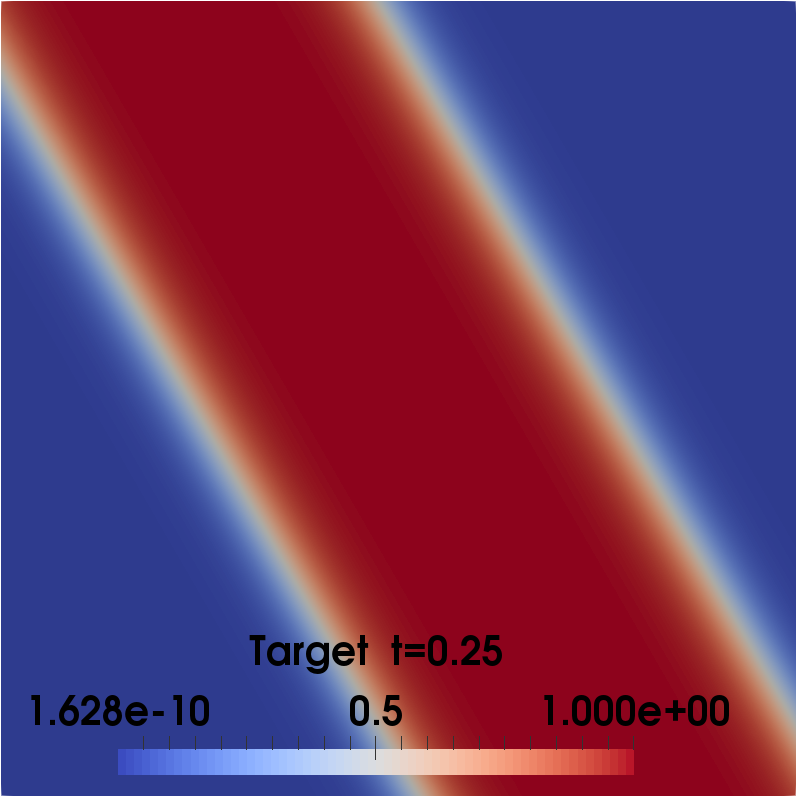}
    \includegraphics[scale=0.138]{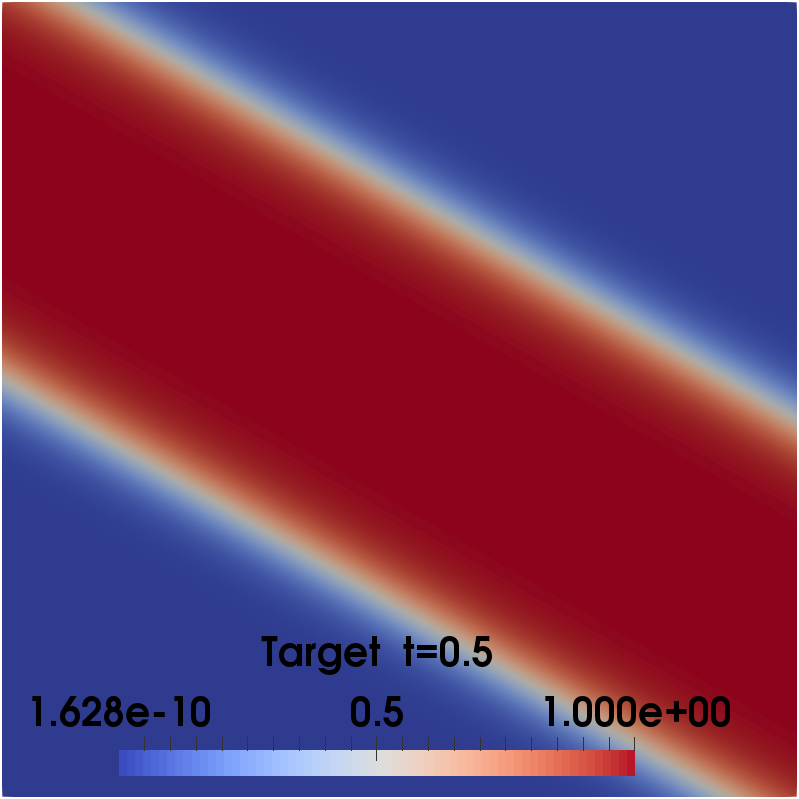}
    \includegraphics[scale=0.138]{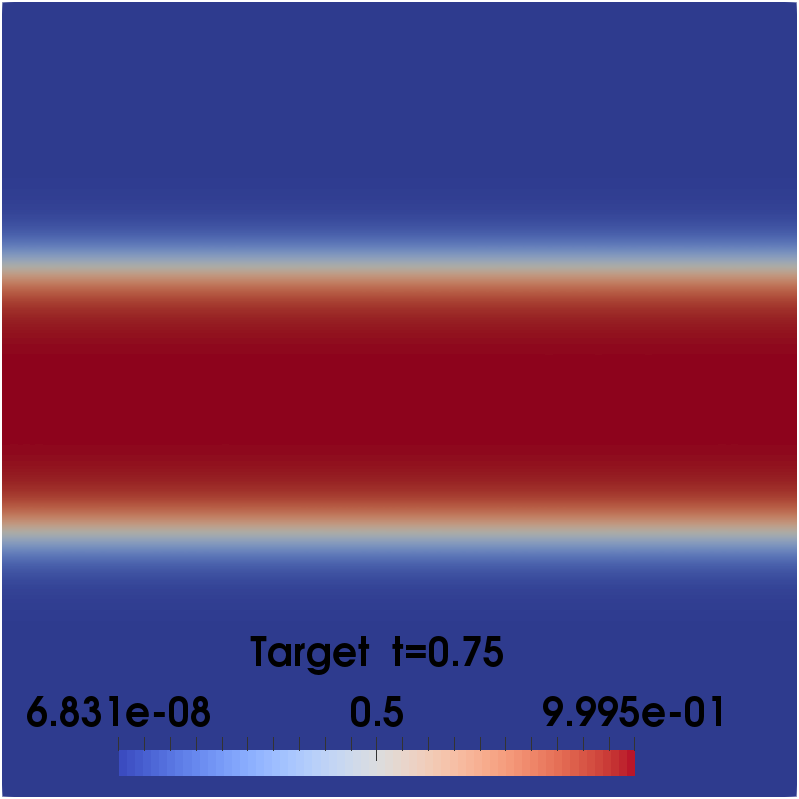}
    \caption{Example 5, plots of the target at time $t=0,\;0.25,\;\;0.5,\;0.75$
      for the turning wave example.} \label{fig:targetturningwave}
 \end{figure} 
    
As parameters, we use $\varrho=10^{-6}$, $a=-10^{+6}$, and $b=10^{+6}$ in the 
unconstrained case, while $a=-10^{+2}$ and $b=10^{+2}$ are set in the
constrained case. We then follow the approach of the previous 
section to solve the coupled nonlinear first order optimality system by
the semismooth Newton method. The numerical solutions for state, adjoint
state, and control in the space-time domain are visualized in
Fig.~\ref{fig:turningwaveupz}.  

\begin{figure}[htb]
    \centering
    \includegraphics[scale=0.2]{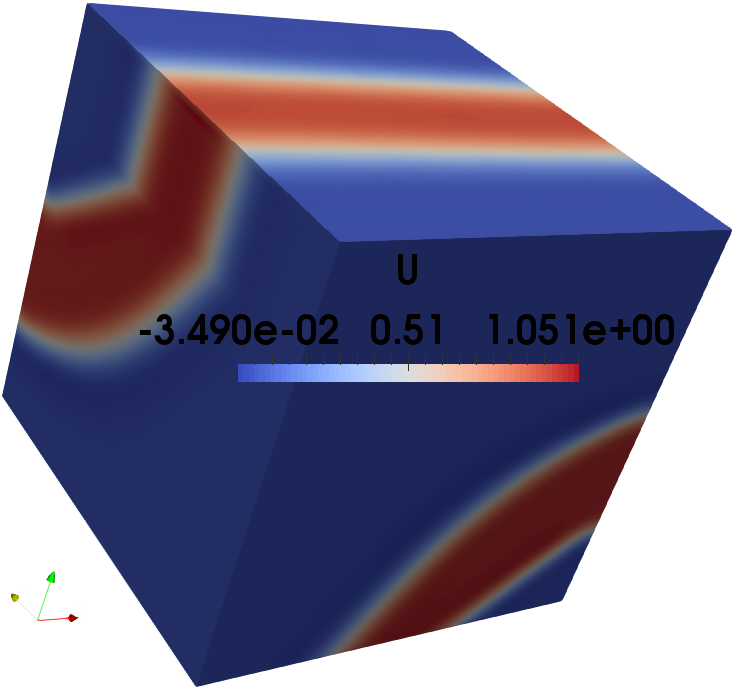}
    \includegraphics[scale=0.2]{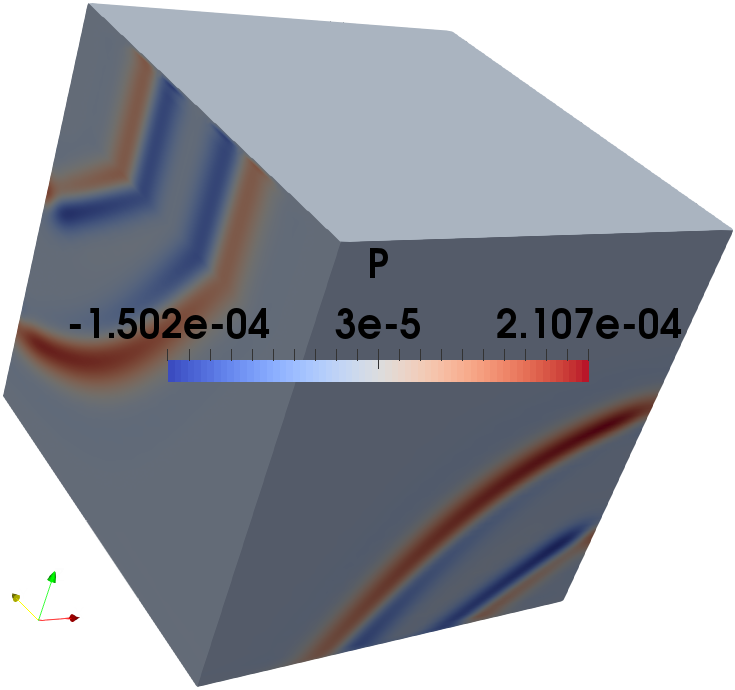}
    \includegraphics[scale=0.2]{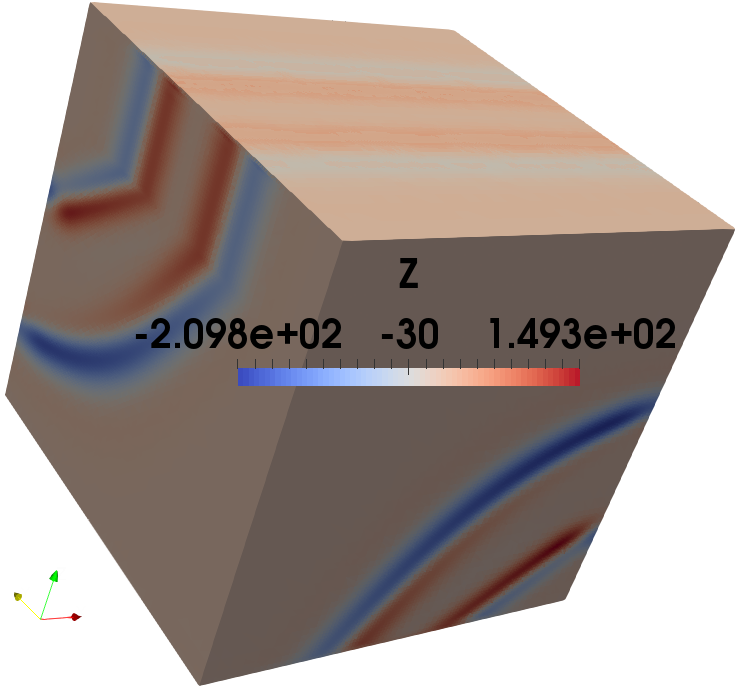}
    \includegraphics[scale=0.2]{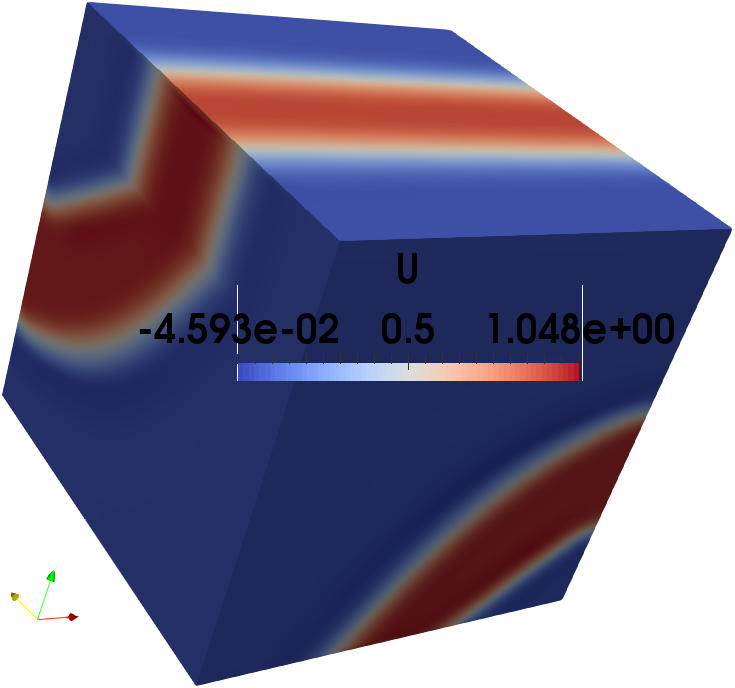}
    \includegraphics[scale=0.2]{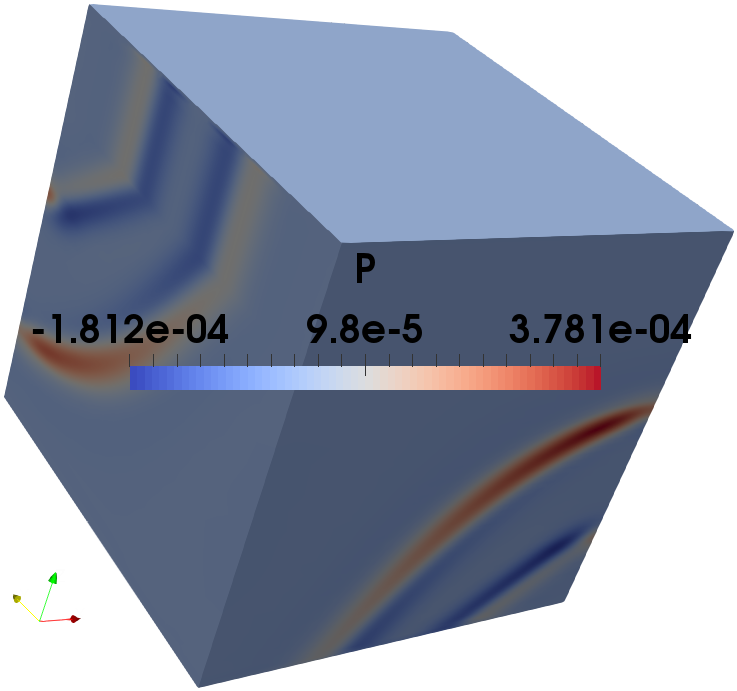}
    \includegraphics[scale=0.2]{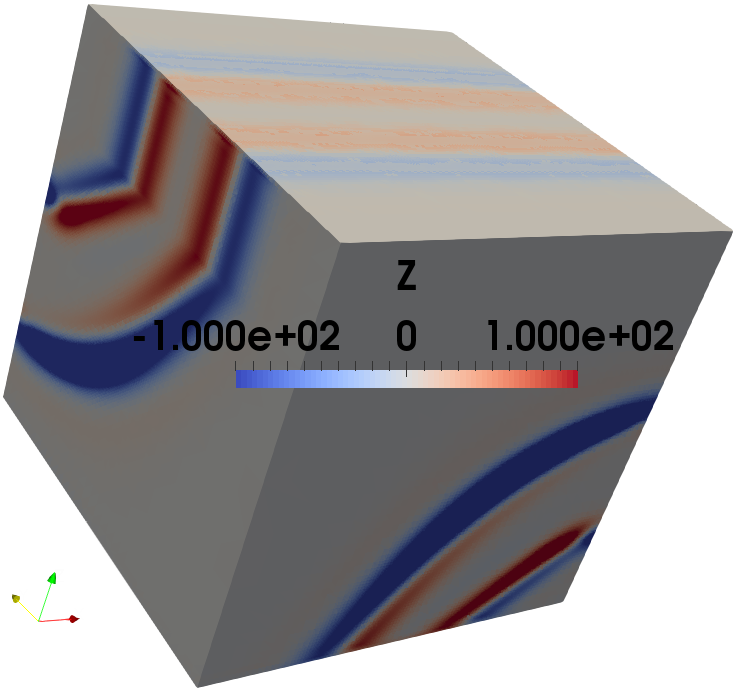}
    \caption{Example 5, visualization of the numerical solutions for state,
      adjoint state and control in the space-time domain: 
      Without box constraints ($a=-1e+6$, $b=1e+6$, top), with box 
      constraints ($a=-1e+2$, $b=1e+2$, bottom).} \label{fig:turningwaveupz}
\end{figure} 
  
In Fig.~\ref{fig:turningwavesliceu} and Fig.~\ref{fig:turningwaveslicez}, 
we visualize the numerical solutions for the state and the control at 
different times $t=0,\;0.25,\;0.5,\;0.75$, respectively. In this particular 
turning wave example, we see almost no difference between the cases with 
or without control constraints.   

\begin{figure}[htb]
    \centering
    \includegraphics[scale=0.138]{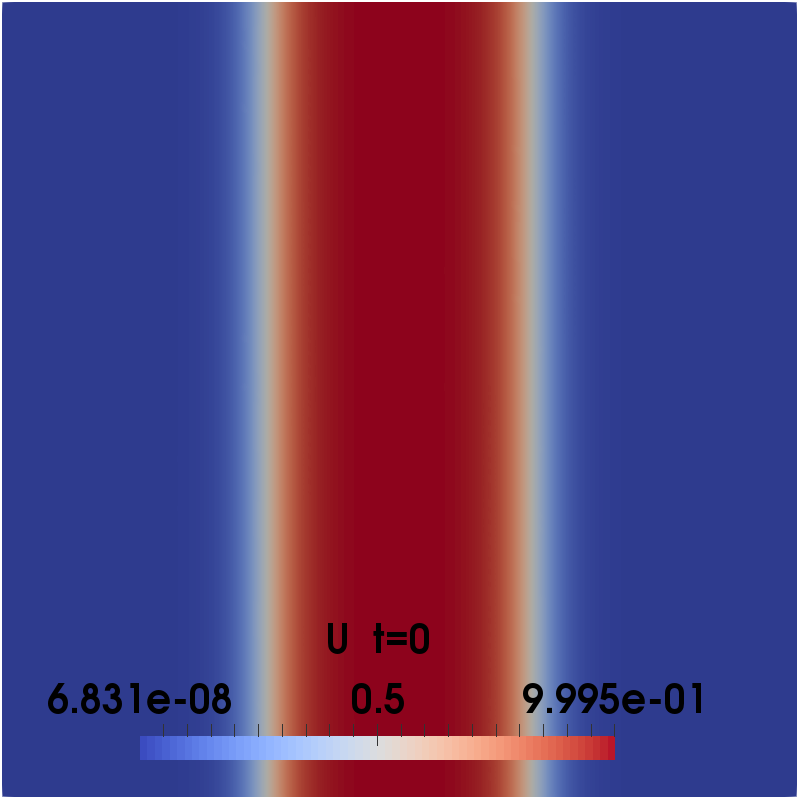}
    \includegraphics[scale=0.138]{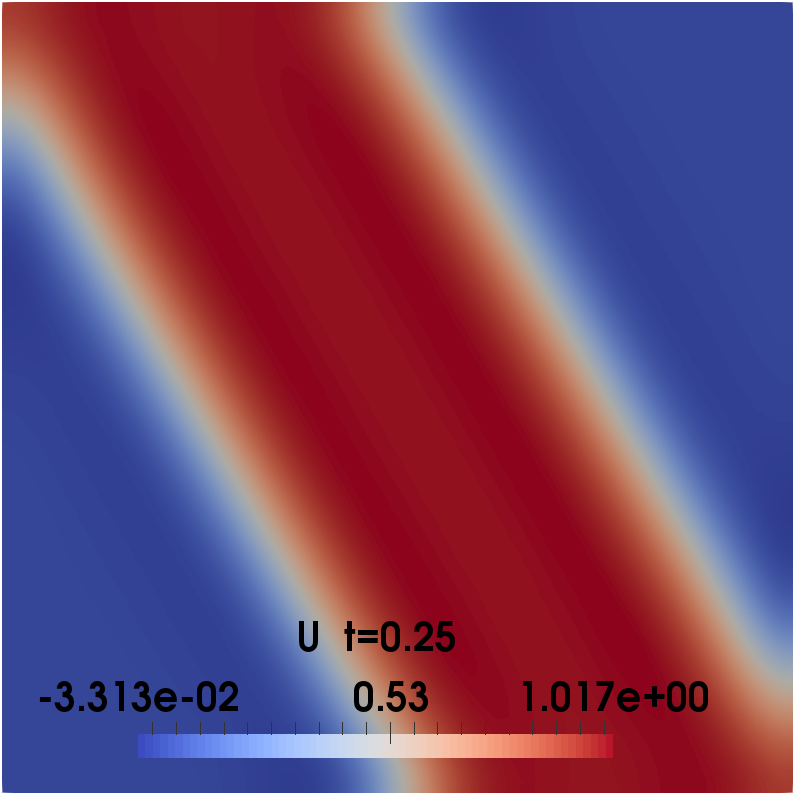}
    \includegraphics[scale=0.138]{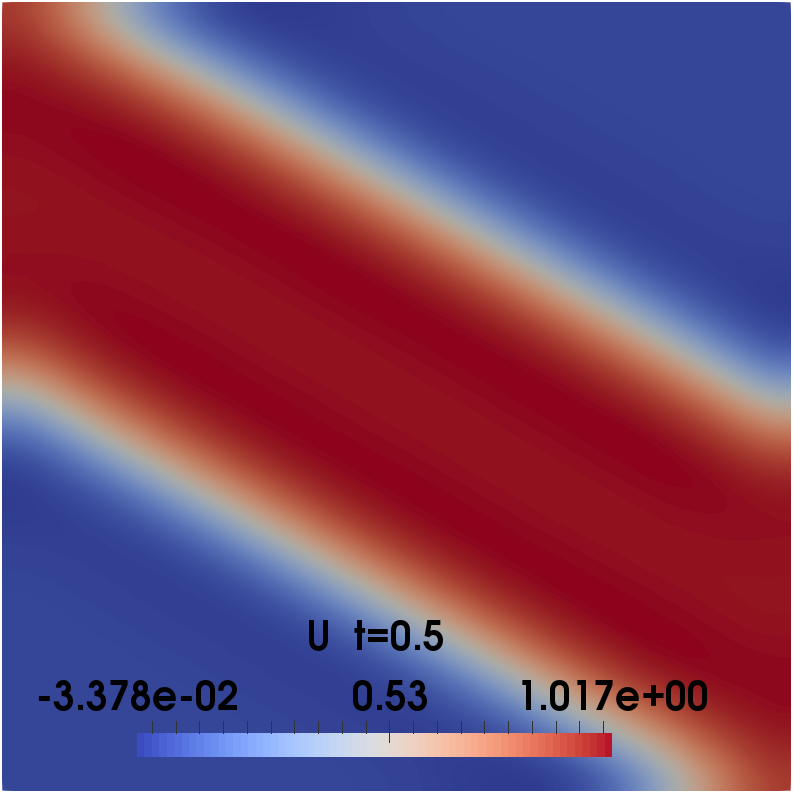}
    \includegraphics[scale=0.138]{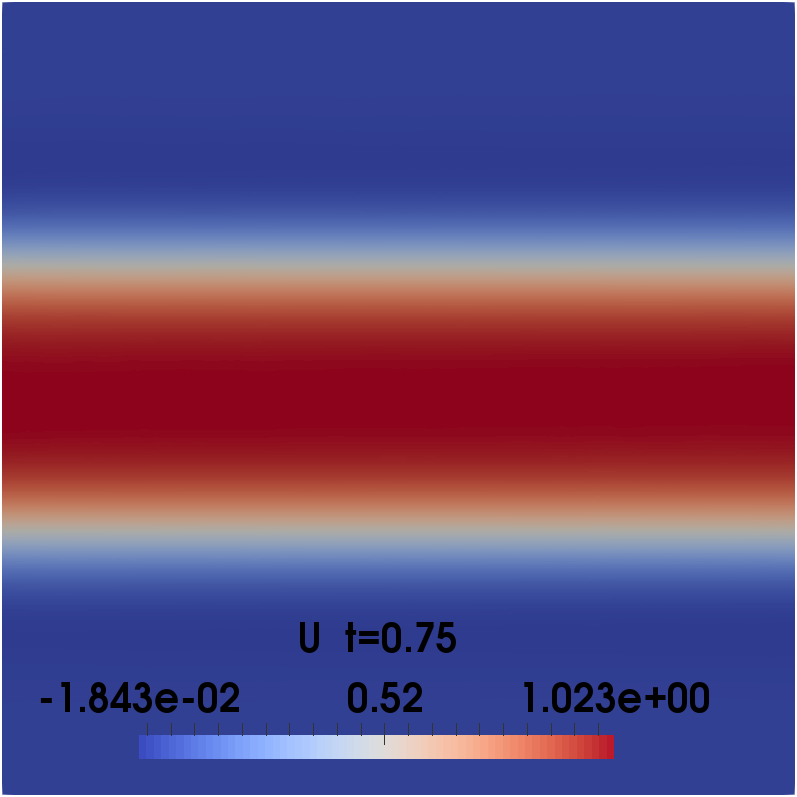}
    \includegraphics[scale=0.138]{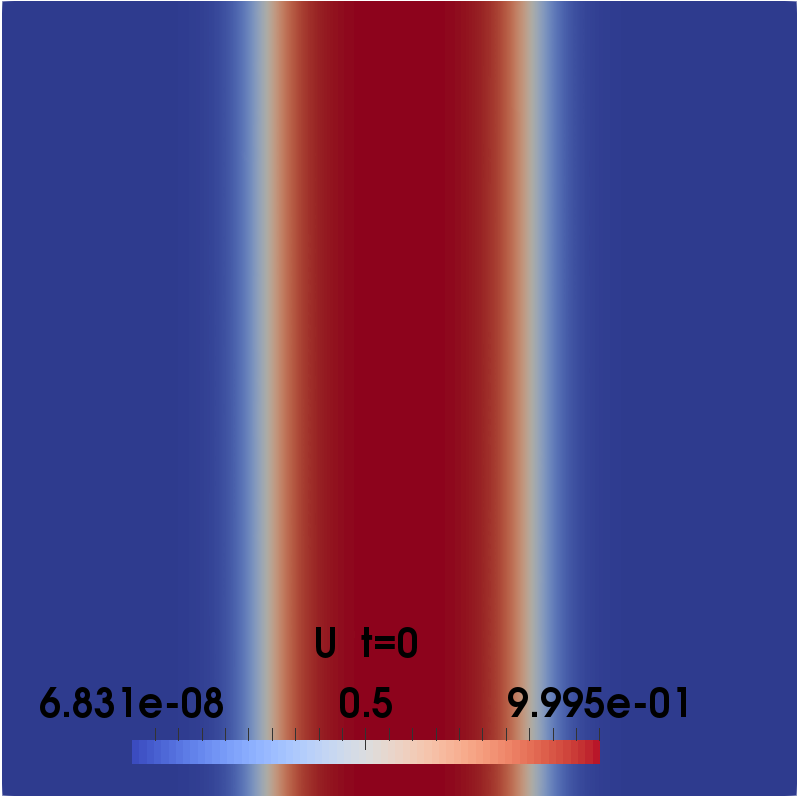}
    \includegraphics[scale=0.138]{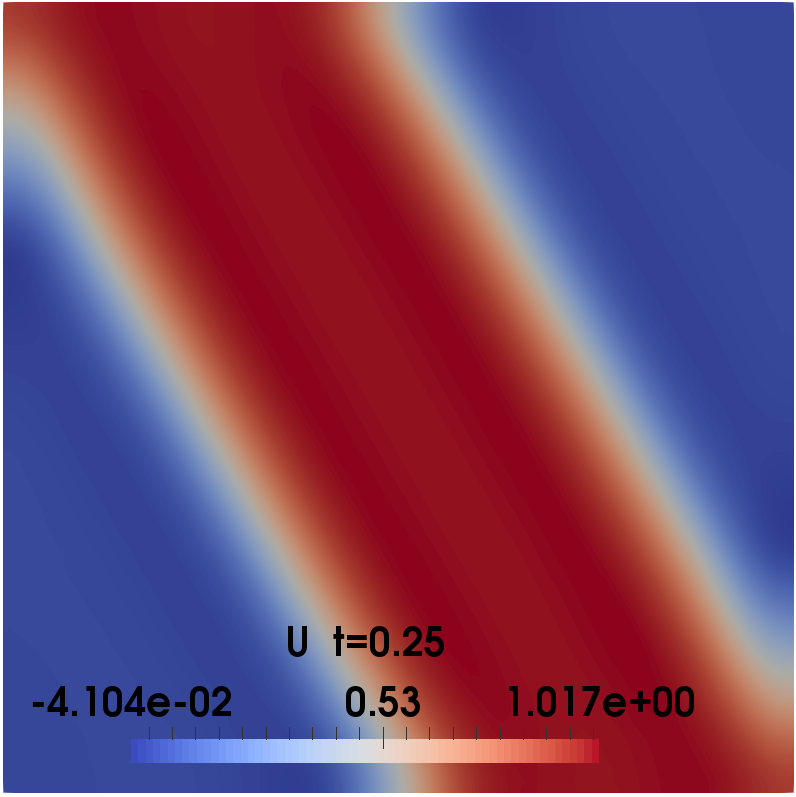}
    \includegraphics[scale=0.138]{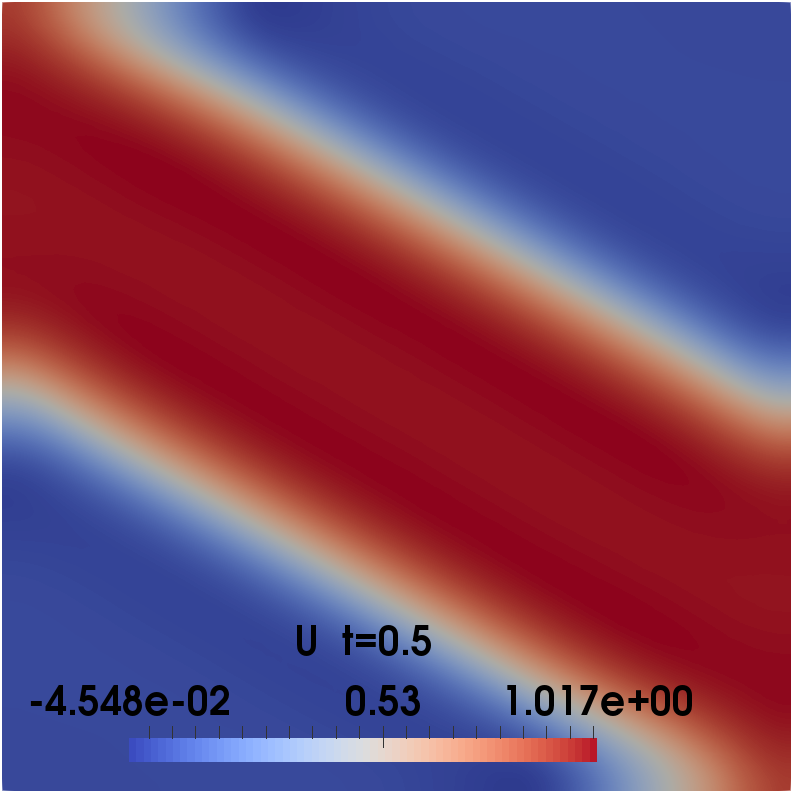}
    \includegraphics[scale=0.138]{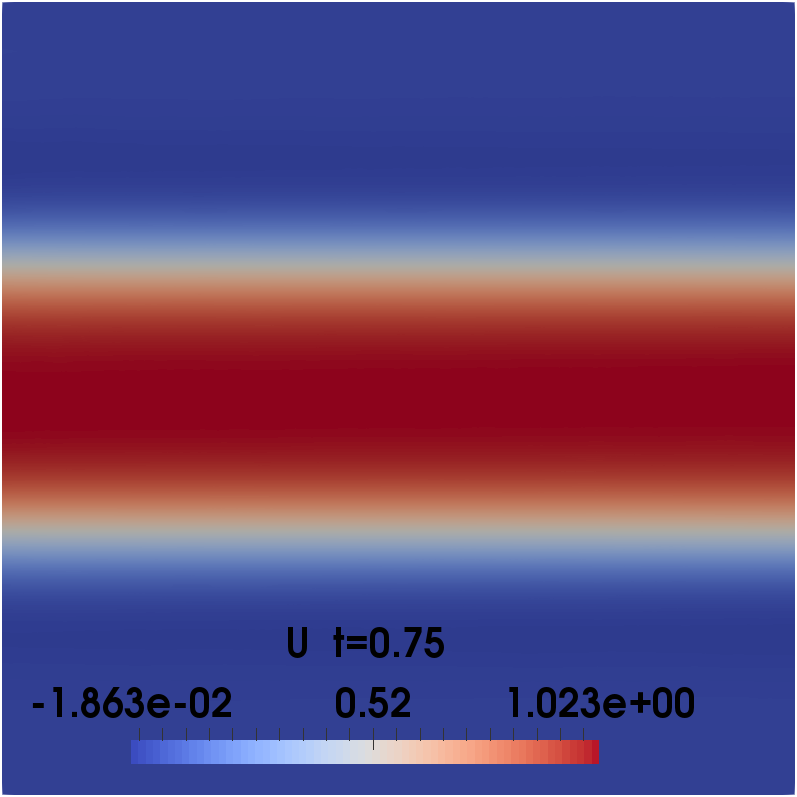}
    \caption{Example 5, visualization of the numerical solutions for the state
      at $t=0,\;0.25,\; 0.5,\;0.75$, without box constraints ($a=-1e+6$,
      $b=1e+6$, top) and with box constraints ($a=-1e+2$, $b=1e+2$,
      bottom).} \label{fig:turningwavesliceu} 
\end{figure} 

\begin{figure}[htb]
    \centering
    \includegraphics[scale=0.138]{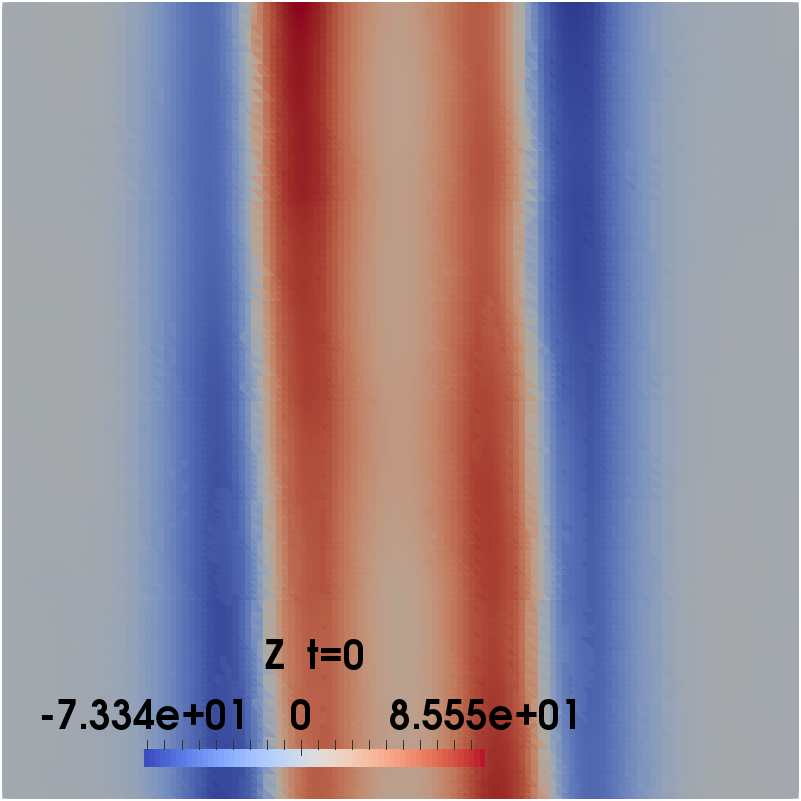}
    \includegraphics[scale=0.138]{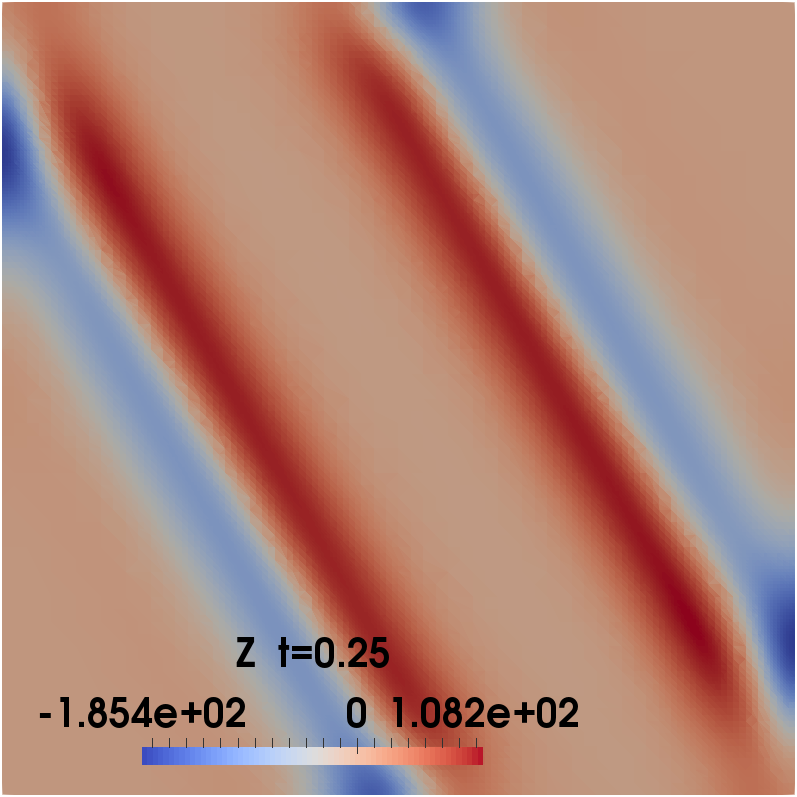}
    \includegraphics[scale=0.138]{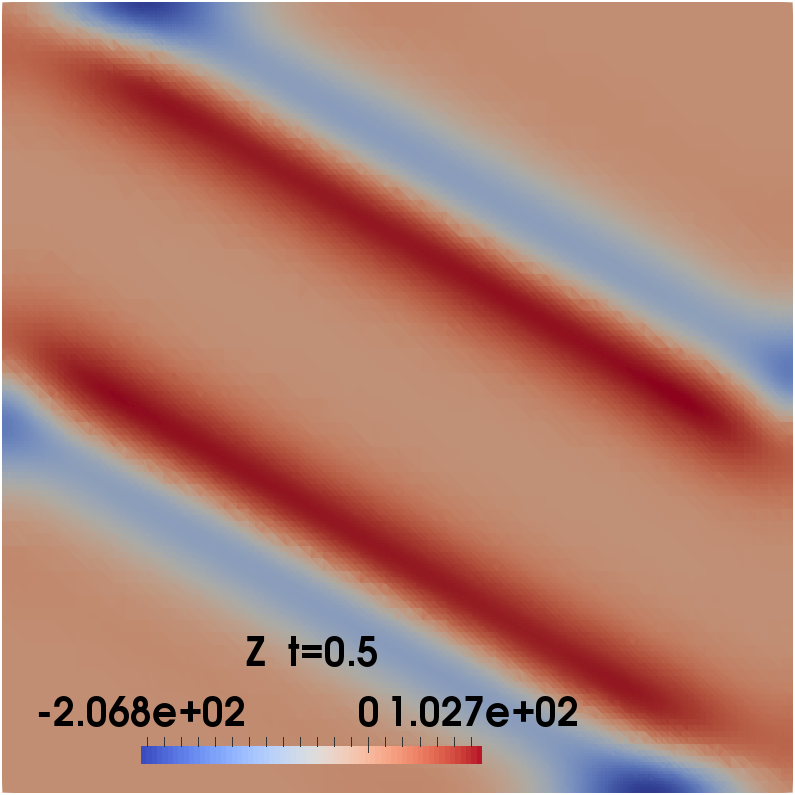}
    \includegraphics[scale=0.138]{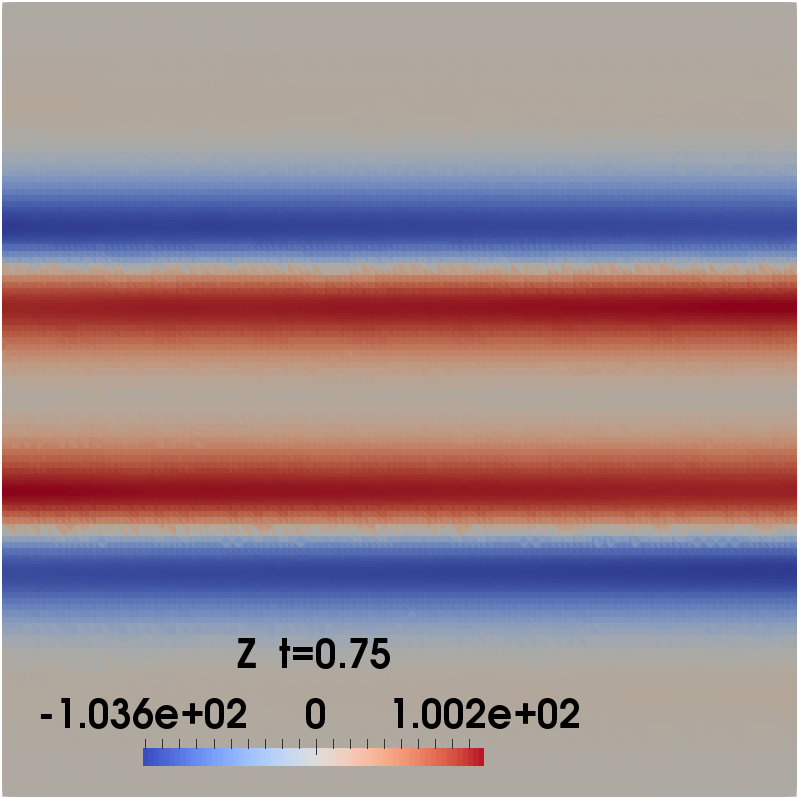}
    \includegraphics[scale=0.138]{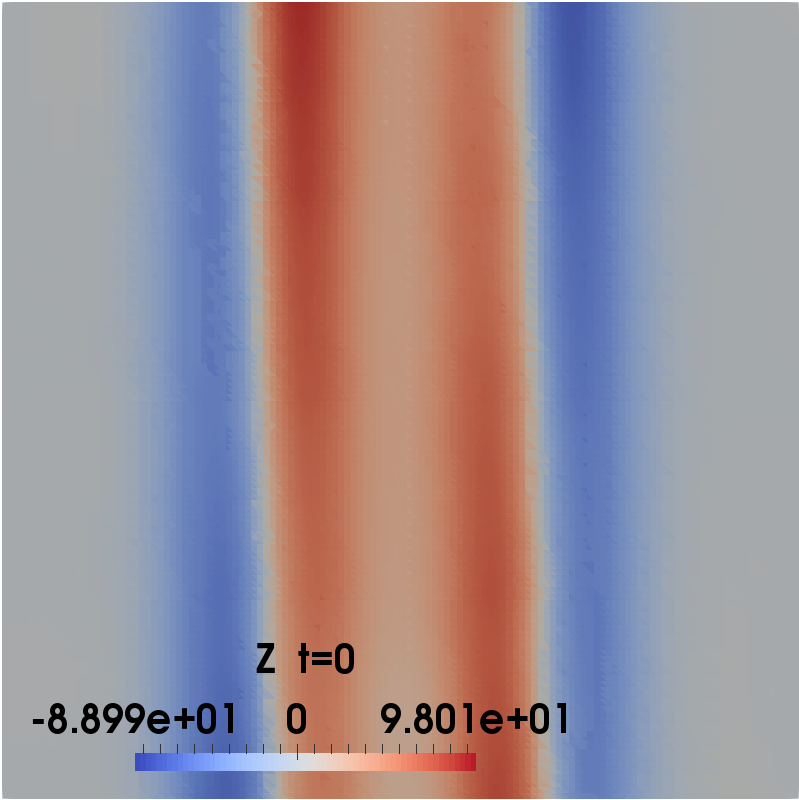}
    \includegraphics[scale=0.138]{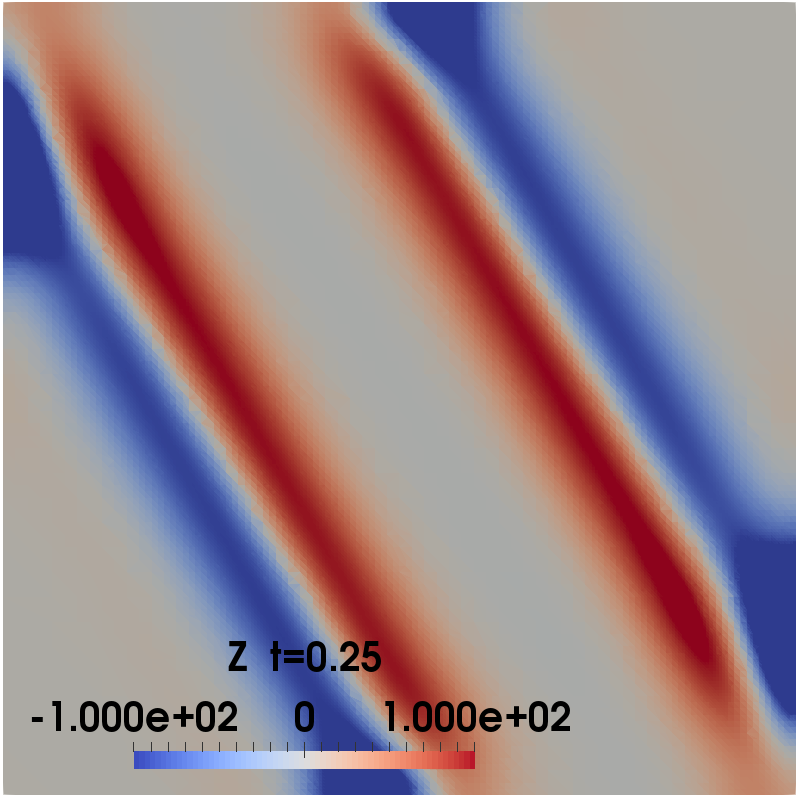}
    \includegraphics[scale=0.138]{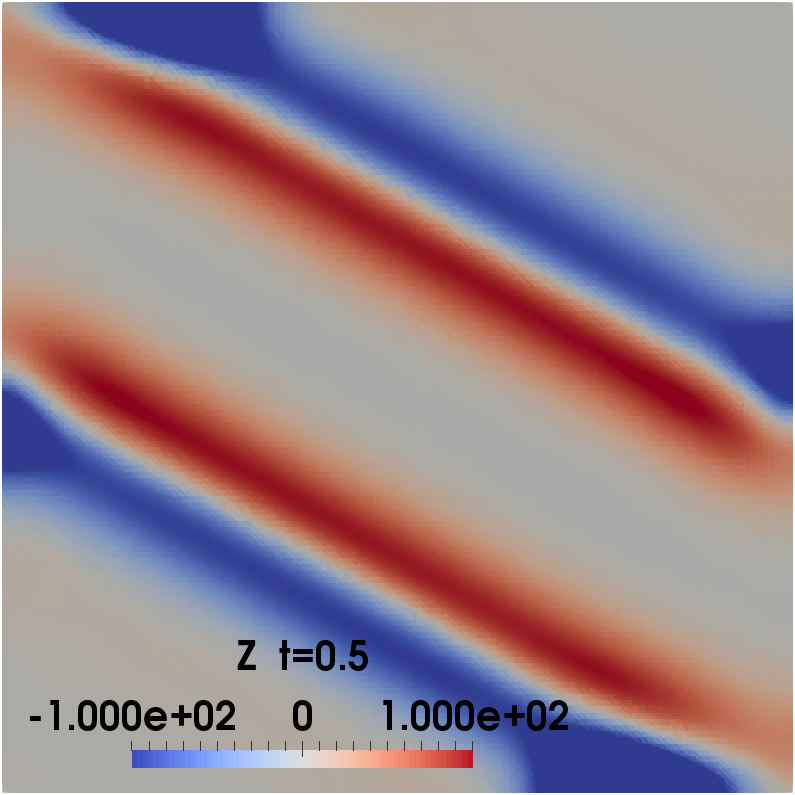}
    \includegraphics[scale=0.138]{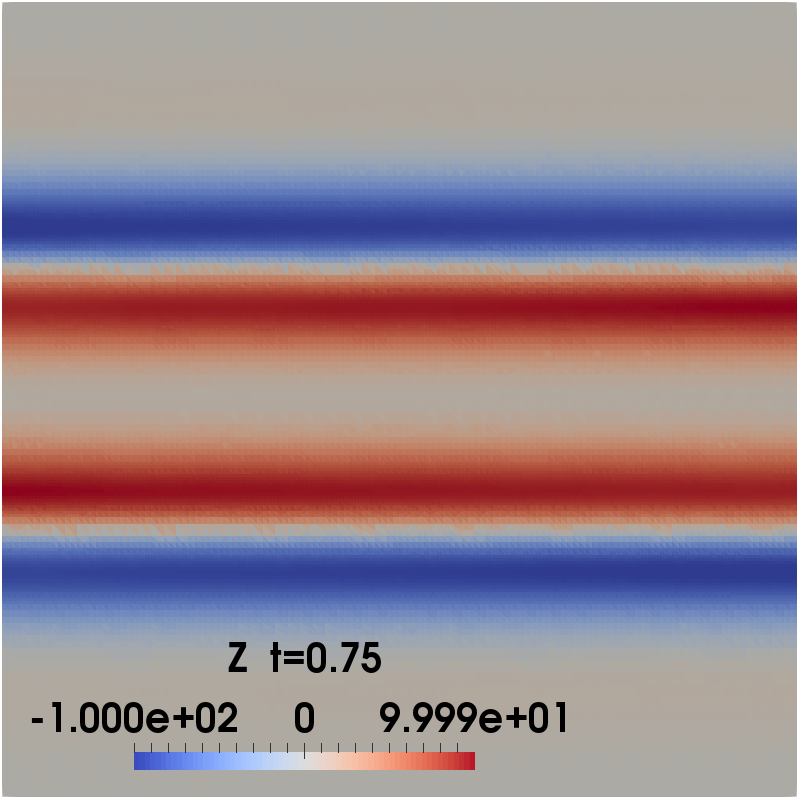}
    \caption{Example 5, visualization of the numerical solutions for the
      control at $t=0,\;0.25,\; 0.5,\;0.75$, without box constraints
      ($a=-1e+6$, $b=1e+6$, top) and with box constraints ($a=-1e+2$, $b=1e+2$,
      bottom).} \label{fig:turningwaveslicez} 
\end{figure} 
  
For the coupled state and adjoint state system, we applied again an
adaptive method as it was used in \cite{OSHY18} for the heat (state)
equation. The adaptive meshes in the
space-time domain and at different time levels $t=0,\;0.25,\;0.5,\;0.75$ are
illustrated in Fig.~\ref{fig:adaptiveturningwaves}. We clearly observe
that our adaptive mesh refinements follow the rotation of the turning 
wave fronts in both the unconstrained and constrained cases. In the 
unconstrained problem, the mesh is visualized for the
$25$th refinement step, containing $3,774,637$ grid points, i.e., 
$7,549,274$ degrees of freedom in total for the coupled first order 
necessary optimality system.  In the constrained setting, the mesh is 
displayed for the $28$th refinement step, containing
$5,100,060$ grid points, i.e., $10,200,120$ degrees of freedom in total 
for the coupled optimality system.

\begin{figure}[htb]
    \centering
    \includegraphics[scale=0.112]{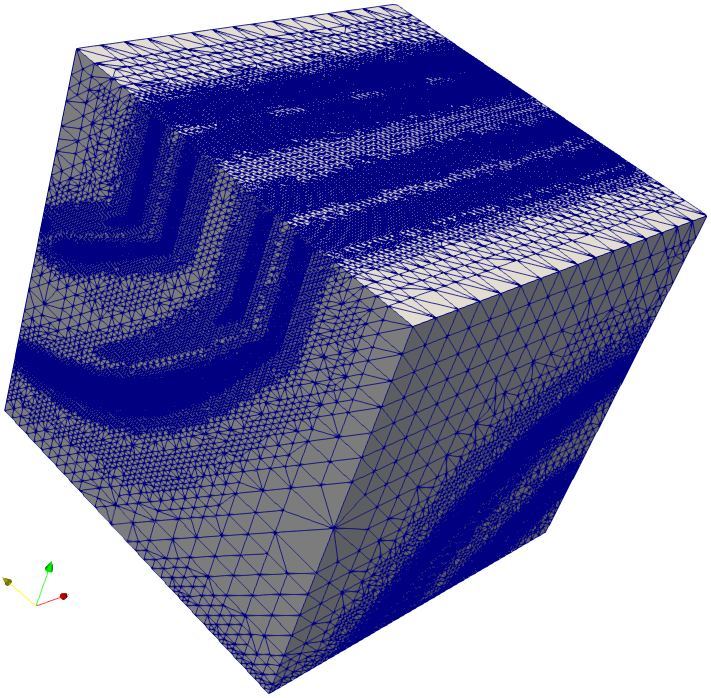}
    \includegraphics[scale=0.112]{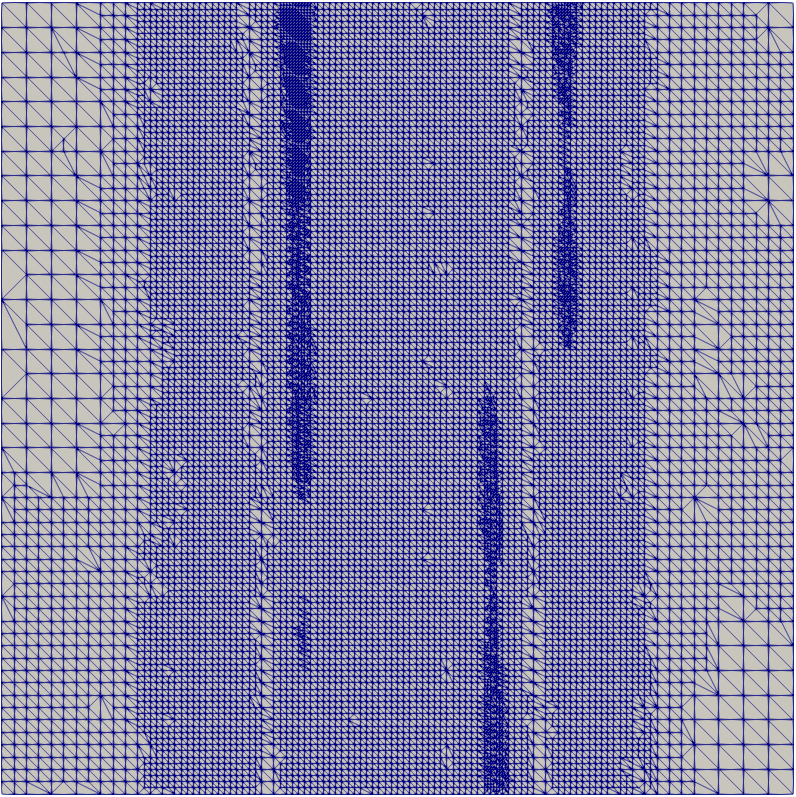}
    \includegraphics[scale=0.112]{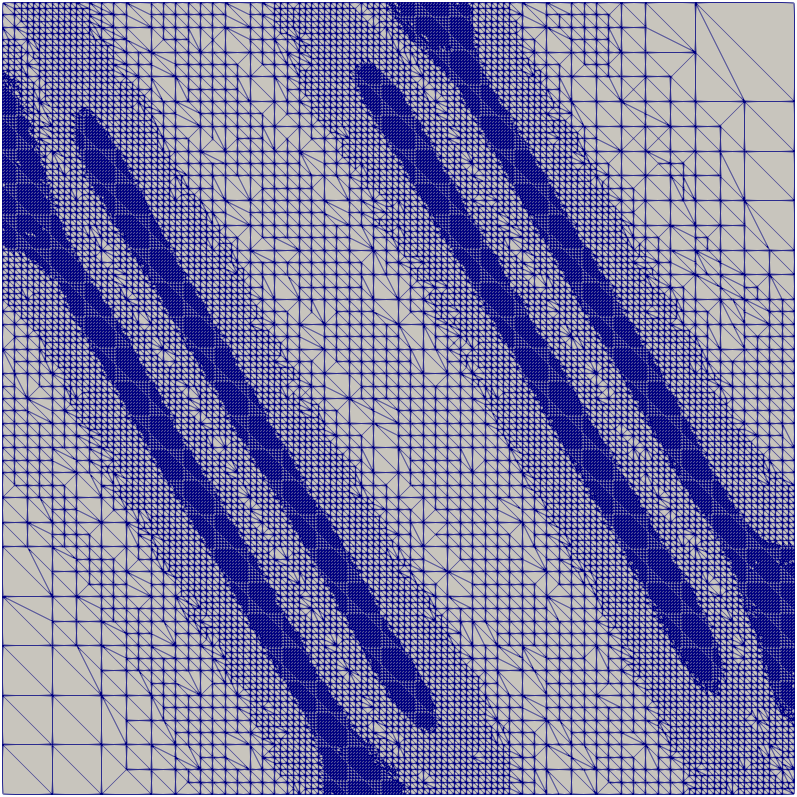}
    \includegraphics[scale=0.112]{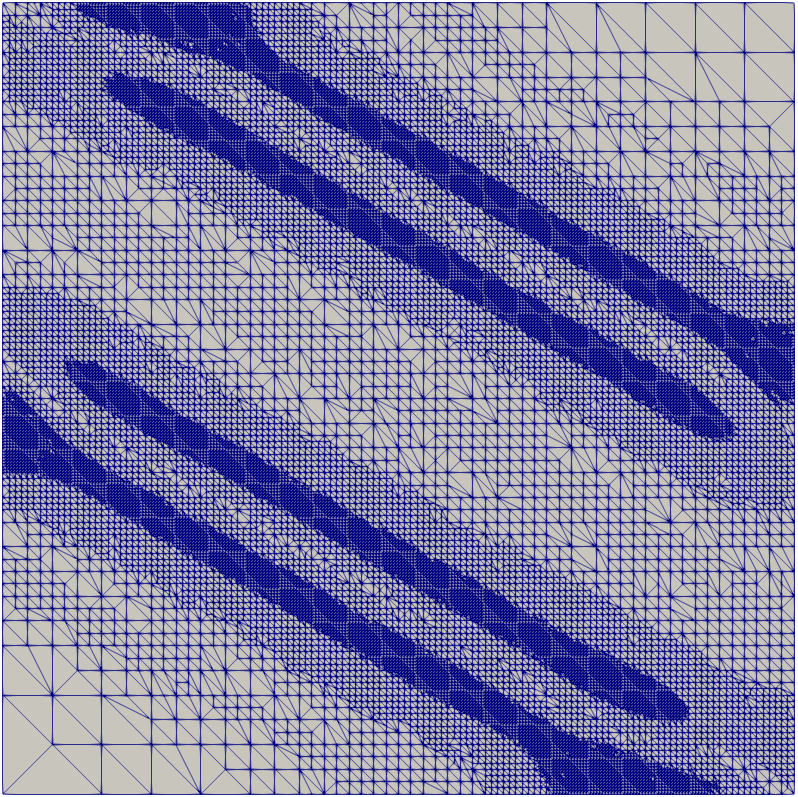}
    \includegraphics[scale=0.112]{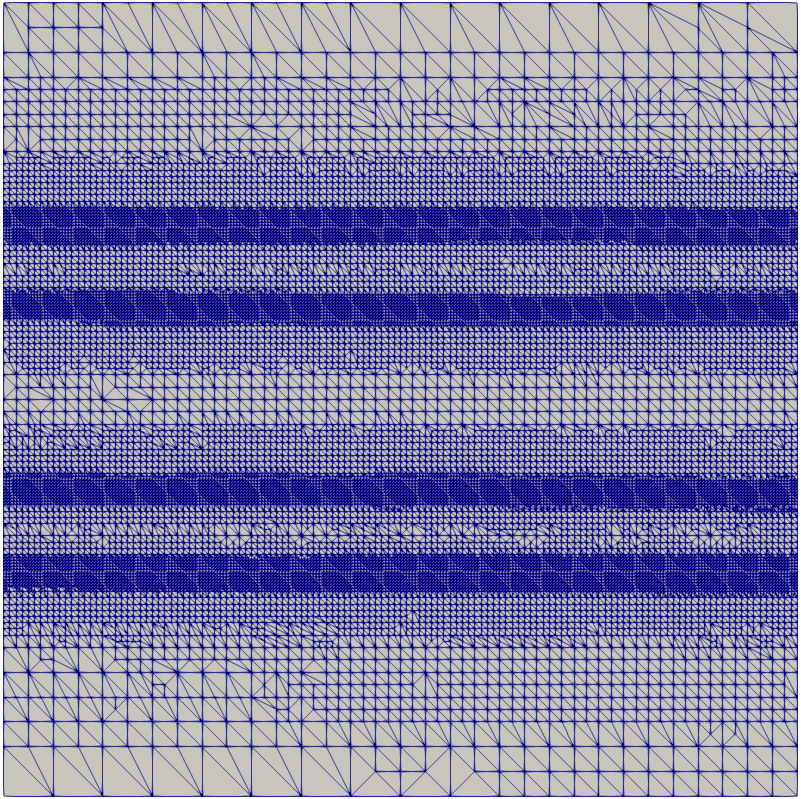}
    \includegraphics[scale=0.112]{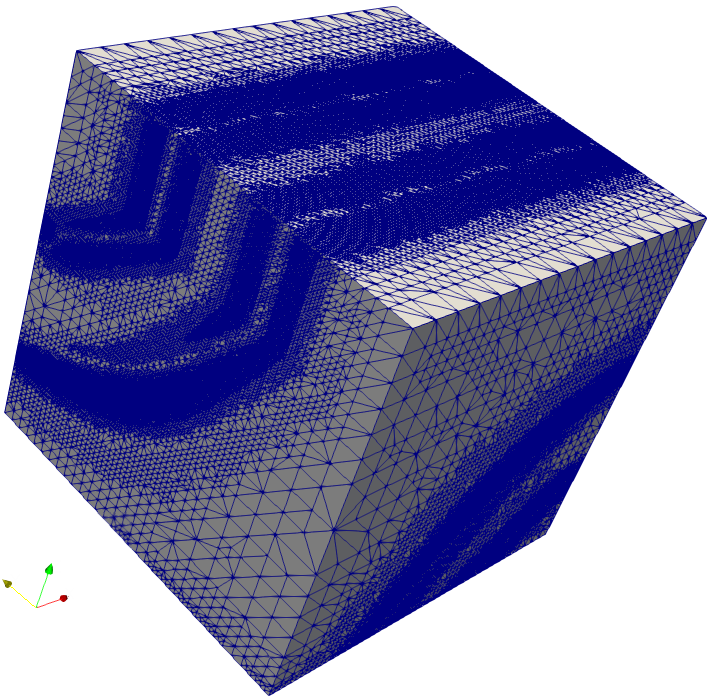}
    \includegraphics[scale=0.112]{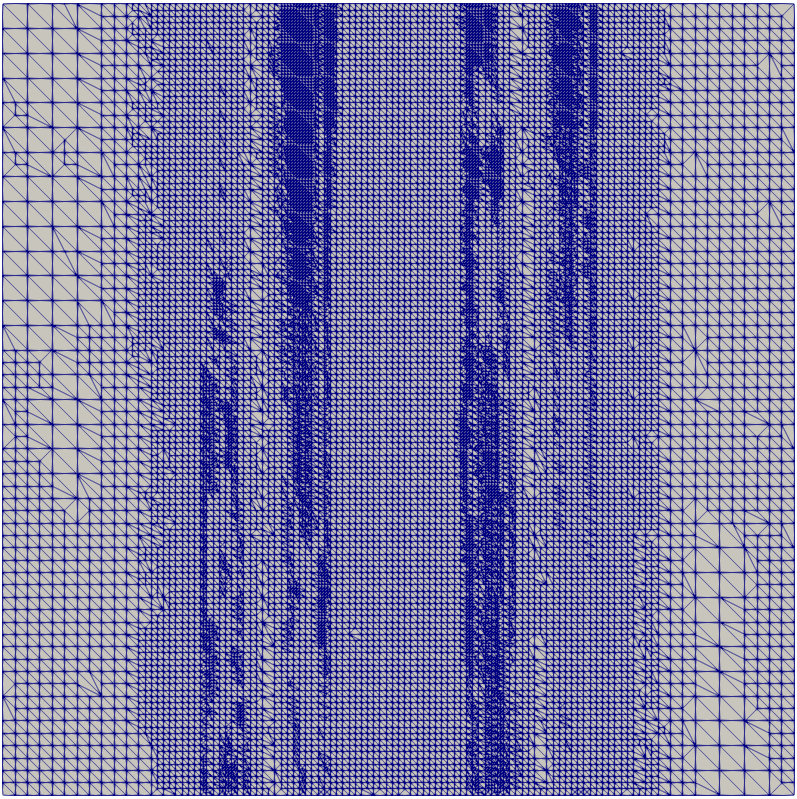}
    \includegraphics[scale=0.112]{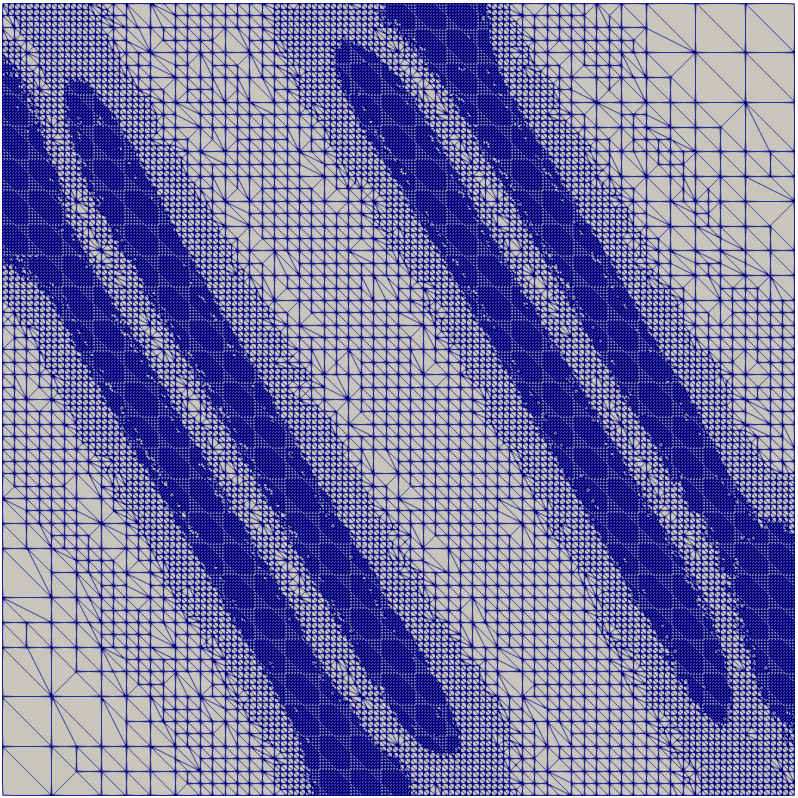}
    \includegraphics[scale=0.112]{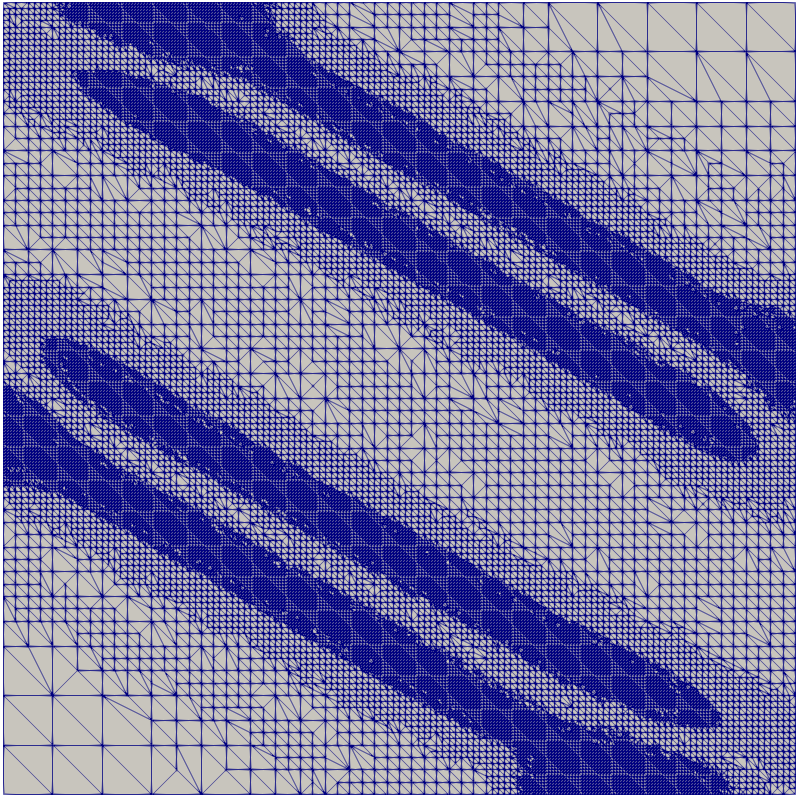}
    \includegraphics[scale=0.112]{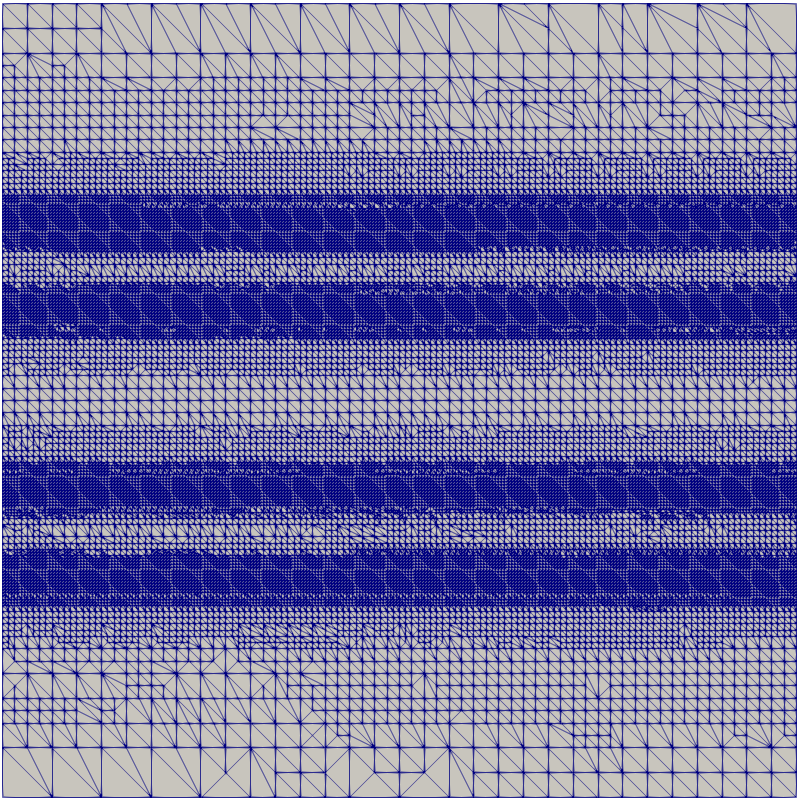}
    \caption{Example 5, visualization of the adaptive mesh refinements 
      at the $25$th adaptive refining step without constraint (top) 
      and the $28$th step with constraints (bottom), in the space-time 
      domain at $t=0,\;0.25,\;0.5,\;0.75$ (from left to
      right)} \label{fig:adaptiveturningwaves} 
\end{figure} 

\section{Conclusions}
\label{sec:con}
In this work, we have considered unstructured space-time finite element 
methods for the optimal control of linear and semilinear parabolic equations, 
without or with box constraints imposed on the control. 
We
  have shown stability of the continuous and 
  the discrete optimality system (with linear state equations and without
  control constraints), and derived error estimates. Our numerical results 
  confirm the theorems and show optimal convergence rates of the space-time
  finite element approximations. Further, our methods are applicable to more
  complicated optimal control problems with semilinear state 
  equations and box constraints. This is also confirmed by our numerical
  experiments, using the Lagrange-Newton method.
We use adaptivity based on a residual error indicator to reduce the complexity.
The rigorous analysis of adaptive space-time procedures is certainly 
a challenging task of future research work.
The linear system respectively the linearized systems of finite element equations 
are solved by an algebraic multigrid preconditioned GMRES method.
This GMRES works fine in practice, but a rigorous convergence analysis is still missing.
The development of parallel solvers will 
certainly make this space-time 
approach an efficient alternative to time-stepping methods 
which are sequential in time.


\bibliography{LSTY}
\bibliographystyle{abbrv}

\end{document}